\theoremstyle{plain}\newtheorem{theorem}{Theorem}[section]
\theoremstyle{plain}
\theoremstyle{plain}\newtheorem{lemma}[theorem]{Lemma}
\theoremstyle{plain}
\theoremstyle{definition}
\theoremstyle{definition}\newtheorem{remark}[theorem]{Remark}
\theoremstyle{definition}\newtheorem{remarks}[theorem]{Remarks}
\title{A priori $L^\infty-$bound for Ginzburg-Landau energy minimizers with divergence penalization}
\date\today
\begin{document}

\author[1]{Lia Bronsard\footnote{bronsard@mcmaster.ca }}
\author[1]{Andrew Colinet\footnote{andrew.colinet@alum.utoronto.ca}}
\author[1]{Dominik Stantejsky\footnote{stantejd@mcmaster.ca}}
\affil[1]{Department of Mathematics and Statistics, McMaster University, Hamilton, ON L8S 4L8 Canada}

\maketitle

\begin{abstract}
We consider minimizers $u_\varepsilon$ of the Ginzburg-Landau energy with quadratic divergence penalization on a simply-connected two-dimensional domain $\Omega$.
On the boundary, strong tangential anchoring is imposed.
We prove that minimizers satisfy a $L^\infty$-bound uniform in $\varepsilon$
when $\Omega$ has $C^{2,1}-$boundary and that the Lipschitz constant blows up like $\varepsilon^{-1}$ when $\Omega$ has $C^{3,1}-$boundary. Our theorem extends to $W^{2,p}-$regularity result for our elliptic system with mixed Dirichlet-Neumann boundary condition. \\
\linebreak
\textbf{Keywords:}  Ginzburg-Landau energy, regularity for elliptic systems, divergence penalization, mixed Dirichlet-Neumann boundary condition
\linebreak
\textbf{MSC2020:} 
49K20, % Optimality conditions for problems involving PDEs
35B45, % A priori estimates in context of PDEs
35B38, % Critical points of functionals in context of PDEs
35E20, % General theory of PDEs and systems of PDEs with constant coefficients
49S05. % Variational principles of physics
\end{abstract}

\section{Introduction}
In this article we obtain $L^\infty$ a-priori estimates for minimizers of the Ginzburg-Landau energy with divergence penalization:
\begin{equation}\label{def:IntroEnergy}
    \int_{\Omega}\!{}\biggl\{
    \frac{1}{2}|\nabla{}u|^{2}
    +\frac{\ktwo}{2}\bigl(\text{div}(u)\bigr)^{2}
    +\frac{1}{4\varepsilon^{2}}\bigl(|u|^{2}-1\bigr)^{2}\biggr\}
    \, ,
\end{equation}
where $k>0$, $\Omega\subset\mathbb{R}^2$ is a domain with sufficiently smooth boundary and we impose tangential boundary conditions, namely $u\cdot \nu=0$ on $\partial \Omega$ with $\nu$ the exterior unit normal to $\partial \Omega$.
When $k=0$ the energy, \eqref{def:IntroEnergy}, reduces to the Ginzburg-Landau energy which has been the subject of intensive research, see for example \cite{BBH,MSZ,PR,San,Str} in the case of Dirichlet boundary conditions. 
The study of the formation of singularities when $\eps\to 0$ is very important in the
understanding of defects in superconductors, and an important first step is to obtain the uniform $L^\infty-$bound $\Vert u\Vert_{L^\infty}\leq 1$ as well as the degenerate Lipschitz bound $\varepsilon\Vert \nabla u\Vert_{L^\infty}\leq C$. 
In the case of the Ginzburg-Landau energy with Dirichlet boundary conditions, this is obtained via a classical maximum principle and a blow up procedure for the Lipschitz bound. However, as observed in \cite[Remark 1.1]{CoLa}, when an anisotropy is included in the energy, namely replacing $|\nabla{}u|^{2}$ with a more general function $W(x, \nabla u)$, for a positive definite quadratic form $W(x,\cdot)$, it becomes very difficult to obtain a-priori estimates.
This is due to the fact that in the anisotropic case, there is coupling of all components, and in fact one can only hope to obtain estimates such as $\Vert u\Vert_{L^\infty(\Omega)}\leq C\Vert u\Vert_{L^\infty(\partial\Omega)}$ with $C>1$, rather than $C=1$ as in the isotropic case without divergence. 
In fact, in \cite{Polya30} it is shown that $C=1$ cannot hold in presence of the divergence term and without the bulk potential, see also Figure \ref{fig:example_polya}.

\begin{figure}
\begin{center}
\includegraphics[scale=0.90]{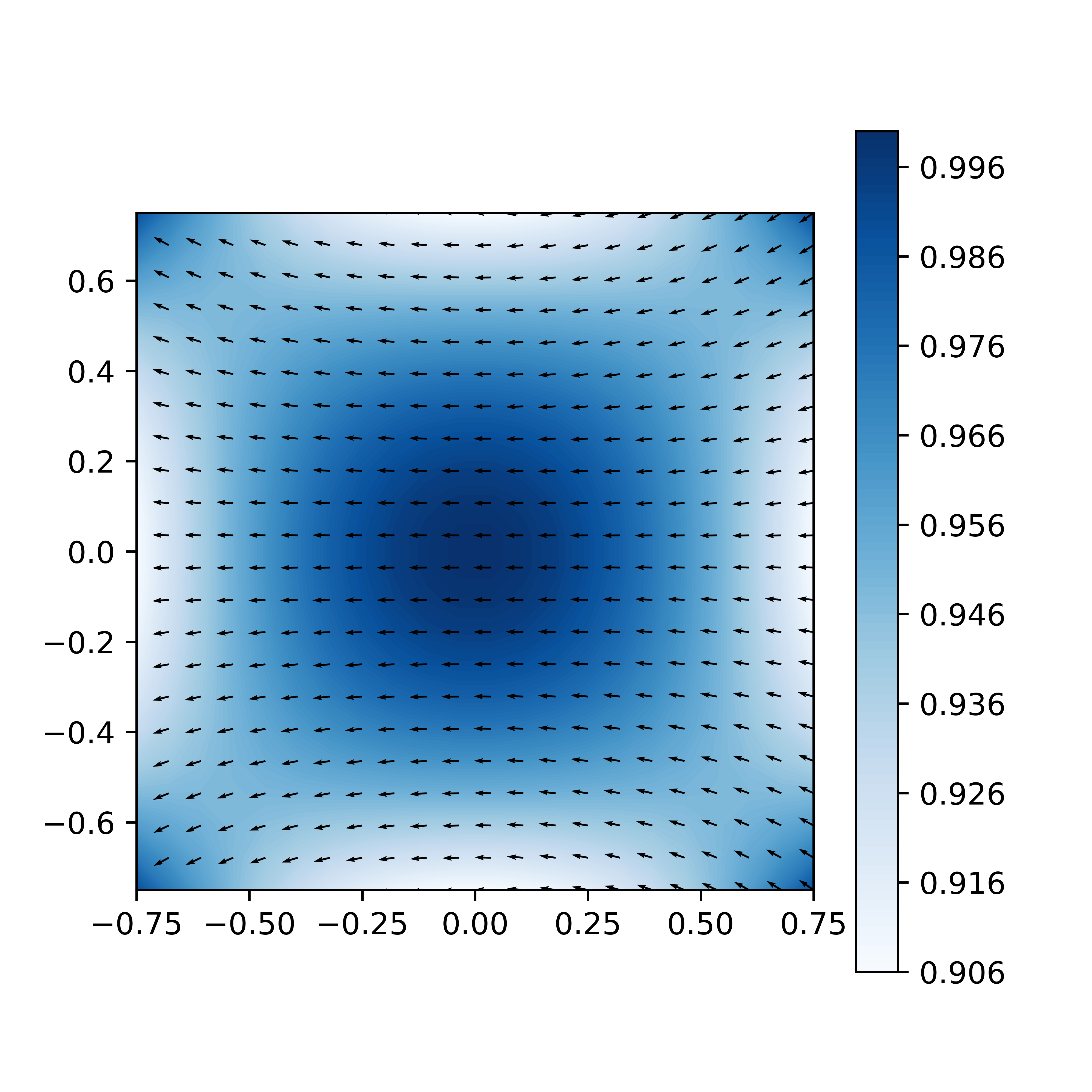} 
\caption{Adaptation of the example in \cite{Polya30} for dimension $n=2$.
For given $k\in\mathbb{R}$, the vector field $u(x,y)=(\frac{\alpha}{2}(x^2+y^2)-\beta, -\gamma xy)$ solves the equation $-\Delta u - k\nabla\dif u = 0$ for arbitrary $\beta,\gamma\in\mathbb{R}$ and $\alpha=\frac{k}{k+2}\gamma$ (if $k\neq -2$). 
If $k=-2$, then set $\gamma=0$ and $\alpha,\beta\in\mathbb{R}$ arbitrary. 
The above plot depicts the vector field $u(x,y)$ for $k=\beta=\gamma=1$ and $\alpha=\frac13$, the color indicates the norm $|u|$. 
One can see that if $u$ is restricted to a domain $\Omega$ chosen to be a small enough disk around $0$, then the maximum of $|u|$ occurs in the interior of the disk.}
\label{fig:example_polya}
\end{center}
\end{figure}

Further, close to the boundary one relies on the Dirichlet condition being regular and respecting the $L^\infty-$ bounds in order to get a uniform estimate.
Both assumptions are no longer satisfied in our setting of tangential anchoring. 
Imposing $u\cdot\nu=0$ leads to novel mixed Dirichlet and Neumann type boundary conditions \eqref{def:PDE}, as shown in \cite{ABB}.
In this article we will use a reflection-extension method compatible with these boundary conditions which preserve the character of the partial differential equation and allow us to apply classical regularity estimates \cite{Gi93}.
Using completely different methods, the article \cite{Si} obtains regularity for more general elliptic systems with mixed Dirichlet and curvature dependent Robin boundary conditions that do not, however, include our setting.

The energy \eqref{def:IntroEnergy} that we study, models thin ferroelectric smectic $C^*$ liquid crystals in the case that the 2-dimensional splay moduli is larger than the 2-dimensional bend moduli, and is studied in the article by Colbert-Kelly and Phillips \cite{CoKePh} with Dirichlet boundary conditions. 
They characterize the singular limit as $\eps\to 0$ of minimizers of the energy \eqref{def:IntroEnergy} as well as for the case that the bend moduli is larger than the splay moduli, namely when $|\dif(u)|^2$ is replaced by $|\curl(u)|^2$. 
Here, we study \eqref{def:IntroEnergy} with tangential boundary conditions, as they arise naturally in experiments in liquid crystal (see for example the work of Volovik and Lavrentovich \cite{LaVol} where nematic liquid drops are placed in an isotropic medium, allowing for the control of nematic boundary behaviour). The study of the singular limit of the Ginzburg-Landau functional with tangential boundary conditions was studied in \cite{ABB} and its $\Gamma-$limit in \cite{ABC}.

Tangential boundary conditions also appear naturally when studying the full Oseen-Frank energy for nematic liquid crystals with $\mathbb{S}^1-$valued $u$. 
Indeed, Day and Zarnescu \cite{DayZar} proved that in order to obtain finite energy global minimizers, one needs to impose special boundary conditions, the most natural being the tangential anchoring $u\cdot \nu=0$.

Our main result is the following:
\begin{theorem}\label{thm:main}
Let $\Omega\subset\mathbb{R}^2$ be open, bounded, and simply-connected with $C^{2,1}-$boundary. 
Then there exist constants $C_1,\varepsilon_{0}>0$, only depending on
$\ktwo$ and $\Omega$ such that for all
$\varepsilon\in (0,\varepsilon_{0})$ and any minimizers $u_\varepsilon$ of
\eqref{def:IntroEnergy} subject to the boundary condition $u\cdot\nu=0$, it holds
\begin{equation*}
\Vert u_\varepsilon\Vert_{L^\infty(\Omega)}
\ \leq \ 
C_1.
\end{equation*}
If $\Omega$ has $C^{3,1}$-boundary then there also exists a constant
$C_{2}>0$, only depending on $\ktwo$ and $\Omega$ such that for all
$\varepsilon\in(0,\varepsilon_{0})$
\begin{equation*}
\Vert \nabla u_\varepsilon\Vert_{L^\infty(\Omega)}
\ \leq \ 
\frac{C_2}{\varepsilon}.
\end{equation*}
\end{theorem}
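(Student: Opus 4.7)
The plan is to reduce both the $L^{\infty}$ and Lipschitz estimates to an interior regularity question through a reflection–extension compatible with the mixed Dirichlet–Neumann data, and then to run two blow-up/Liouville arguments on top of the uniform $W^{2,p}$ estimate so obtained. First I would derive the Euler–Lagrange system
\[
-\Delta u - k\nabla(\mathrm{div}\,u) + \tfrac{1}{\varepsilon^{2}}(|u|^{2}-1)u = 0\quad\text{in }\Omega,
\]
together with $u\cdot\nu = 0$ and a Neumann-type condition for the tangential component of $\partial_\nu u$ containing curvature corrections, as derived in \cite{ABB}. Near a boundary point I would flatten $\partial\Omega$ by a local $C^{2,1}$-diffeomorphism and extend $u$ across $\{y_{2}=0\}$ by taking the tangential component \emph{even} and the normal component \emph{odd} in $y_{2}$. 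A direct computation shows that in the flat model this extension solves the same system in an enlarged strip; for curved boundaries the reflected field solves a perturbation whose extra lower-order terms have $C^{0,1}$ coefficients controlled by the boundary regularity. Feeding this into interior $W^{2,p}$-regularity for elliptic systems \cite{Gi93} yields, for each $1<p<\infty$,
\[
\|u\|_{W^{2,p}(\Omega)} \leq C\bigl(\|u\|_{L^{p}(\Omega)} + \varepsilon^{-2}\|(|u|^{2}-1)u\|_{L^{p}(\Omega)}\bigr).
\]

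For the $L^{\infty}$ bound I would argue by contradiction. Assume $M_{n}:=\|u_{\varepsilon_{n}}\|_{L^{\infty}}\to\infty$ along some $\varepsilon_{n}\to 0$, pick $x_{n}$ realizing the maximum and rescale at the natural scale $\varepsilon_{n}/M_{n}$ via $v_{n}(y):= u_{\varepsilon_{n}}(x_{n}+(\varepsilon_{n}/M_{n})y)/M_{n}$. Then $|v_{n}|\leq 1$, $|v_{n}(0)|=1$, and
\[
-\Delta v_{n} - k\nabla(\mathrm{div}\,v_{n}) + (|v_{n}|^{2}-M_{n}^{-2})v_{n} = 0.
\]
The $W^{2,p}$ estimate, transplanted through the reflection near the (blown-up) boundary, gives $C^{1}_{\mathrm{loc}}$-compactness, so a subsequence converges to a limit $v_{\infty}$ on $\mathbb{R}^{2}$ or on a half-space with the reflected mixed boundary condition (which extends to all of $\mathbb{R}^{2}$), solving $-\Delta v_{\infty}-k\nabla\mathrm{div}(v_{\infty})+|v_{\infty}|^{2}v_{\infty}=0$ with $|v_{\infty}|\leq 1$ and $|v_{\infty}(0)|=1$. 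Testing against $v_{\infty}\eta^{2}$ with a standard cutoff on $B_{R}\to B_{2R}$, using $|v_{\infty}|\leq 1$ to control the cross terms and absorb the coupling, gives $\int_{\mathbb{R}^{2}}(|\nabla v_{\infty}|^{2}+k(\mathrm{div}\,v_{\infty})^{2}+|v_{\infty}|^{4})<\infty$; sending $R\to\infty$ then forces this integral to vanish, hence $v_{\infty}\equiv 0$, contradicting $|v_{\infty}(0)|=1$.

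For the Lipschitz bound I would repeat the scheme at a different scale. Assume $\varepsilon_{n}\|\nabla u_{\varepsilon_{n}}\|_{L^{\infty}}\to\infty$, set $r_{n}:=\|\nabla u_{\varepsilon_{n}}\|_{L^{\infty}}^{-1}$ (so $r_{n}/\varepsilon_{n}\to 0$ thanks to the uniform $L^{\infty}$ bound from the previous step), and let $v_{n}(y):= u_{\varepsilon_{n}}(x_{n}+r_{n}y)$. Then $\|v_{n}\|_{L^{\infty}}\leq C_{1}$, $\|\nabla v_{n}\|_{L^{\infty}}=1$, $|\nabla v_{n}(0)|=1$, and
\[
-\Delta v_{n}-k\nabla(\mathrm{div}\,v_{n})+(r_{n}/\varepsilon_{n})^{2}(|v_{n}|^{2}-1)v_{n}=0.
\]
Here the $C^{3,1}$ regularity of $\partial\Omega$ is needed because the $W^{2,p}$ estimate is now applied one derivative higher, requiring an extra Lipschitz derivative of the coefficients produced by flattening and reflection. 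Since the potential prefactor vanishes in the limit, the limit $v_{\infty}$ solves the homogeneous Lamé-type system $-\Delta v_{\infty}-k\nabla\mathrm{div}(v_{\infty})=0$ on $\mathbb{R}^{2}$; a standard Liouville argument (take the divergence to show $\mathrm{div}(v_{\infty})$ is bounded harmonic, hence constant, so each component of $v_{\infty}$ is bounded harmonic, hence constant) contradicts $|\nabla v_{\infty}(0)|=1$.

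The hard part will be the reflection–extension step: one must verify that combining the $C^{2,1}$- (respectively $C^{3,1}$-) flattening with the parity reflection produces a single elliptic system of Lamé-plus-potential type on the enlarged domain, whose curvature-generated lower-order terms sit in the right regularity class for the interior $W^{2,p}$ theory to apply with constants independent of $\varepsilon$. Tracking these terms carefully is precisely what pins the required boundary regularity to $C^{2,1}$ for the uniform $L^{\infty}$ bound and to $C^{3,1}$ for the degenerate Lipschitz bound.
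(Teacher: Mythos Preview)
Your approach is correct but takes a genuinely different route from the paper's.

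Both strategies share the crucial reflection--extension step (extending $u$ across $\partial\Omega$ by making $u_\tau$ even and $u_\nu$ odd in normal coordinates, so that the extended function solves an elliptic Lam\'e-type system on the enlarged domain, with curvature-generated lower-order terms). You correctly identify this as the hard part, and the paper devotes Section~3 and Appendix~A to carrying it out.

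From there the arguments diverge. The paper proceeds \emph{directly}: it first proves a uniform potential bound $\int_\Omega(|u_\varepsilon|^2-1)^2\le C\varepsilon^2$ by matching an explicit upper bound for the energy against the vortex lower bounds of \cite{ABB} (this step uses minimality). This yields $\|u_\varepsilon\|_{L^4}\le C$ uniformly, which transfers to the extension $U$ and then to its rescaling $\widehat U(z)=U(x_0+\varepsilon z)$. The right-hand side of the rescaled PDE is thus in $L^{4/3}$ uniformly in $\varepsilon$, interior $W^{2,4/3}$ estimates plus Sobolev give the $L^\infty$ bound, and one further differentiation (requiring $\partial\Omega\in C^{3,1}$ so that $\nabla_{z,p}\widetilde{\mathcal F}_j$ is bounded) gives $C^{0,1}$.

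You instead argue by contradiction via blow-up and Liouville, which bypasses the energy comparison entirely: once the reflection is in place, your argument applies to any critical point, not only minimizers. Both Liouville statements are sound. For the first, the two-pass cutoff argument works in $\mathbb{R}^2$: the first pass gives $\int(|\nabla v_\infty|^2+|v_\infty|^4)<\infty$, and the second pass---using that the cross terms live on the annulus where $\nabla\eta\neq0$---drives them to zero. For the second, taking a divergence and iterating the harmonic Liouville theorem is exactly right. One small correction: your stated reason for needing $C^{3,1}$ in the Lipschitz step (``the $W^{2,p}$ estimate is now applied one derivative higher'') is not accurate for your own argument---your blow-up only requires $W^{2,p}$ for some $p>2$ to obtain $C^1$ compactness, and that already holds with Lipschitz leading coefficients, i.e.\ with $\partial\Omega\in C^{2,1}$. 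So your route in fact yields the Lipschitz bound under the weaker hypothesis, a mild improvement over the paper's direct bootstrap.
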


\begin{remarks}\hspace{2pt}\\[-15pt]
    \begin{enumerate}
        \item
            As opposed to previous results \cite{BPP,BBO,CoKePh} on the case of Dirichlet boundary conditions, we avoid imposing regularity assumptions on the
            boundary data beyond being trace class.
            This is possible since our techniques relies on interior regularity methods.
            However, since the methods of \cite{CoKePh} require regularity
            of their boundary data these are inaccessible.
        \item
            Our energy includes an additional term penalizing divergence as in
            \cite{CoKePh}.
            The addition of this term prevents classical methods, as
            seen e.g.\ in Lemma $2.1$ of \cite{ABB}, for
            demonstrating $L^{\infty}-$regularity from working.
        \item
            Interestingly, the techniques employed here do not result in a
            symmetric result for curl penalization with tangential boundary
            conditions.
            This is because the minimizer for \eqref{def:IntroEnergy} also satisfies Neumann boundary conditions (see \eqref{def:PDE} in Subsection \ref{subsec:Notation}), while if we had penalized the curl, we would obtain Robin boundary conditions for the tangential component of $u$, while the normal component $u_\nu$ still satisfies the Dirichlet condition $u_\nu=0$.
            A general Robin boundary conditions can not be handled by our reflection approach, even in the case of a half space. 
            The method from \cite{Si} will prove to be useful in that case, see \cite{ABCSV}.
            However, the Robin boundary condition coming from the curl penalization includes a curvature term, which vanishes for flat boundaries, and the Robin conditions simplifies to a Neumann condition which we can therefore handle in that special case.
        \item  
            Replacing the strong anchoring condition $u\cdot\nu=0$ with a weak anchoring by adding a boundary penalization term such as $\int_{\partial\Omega}(u\cdot\nu)^2$ to the energy \eqref{def:IntroEnergy}, leads to a Neumann condition for $u_\tau$ and a Robin-type condition for $u_\nu$.
            In the case of weak anchoring combined with a curl-penalization, we end up with Robin boundary conditions for both $u_\nu$ and $u_\tau$.
        \item
            Many of the ideas of the proof extend to the case of a multiply-connected domain.
            However, for a complete proof, one would need an extension of
            the lower bound results of \cite{ABB} in order to demonstrate
            the uniform $L^{4}$ bounds of Lemma \ref{lem:SubseqBound} in
            the more general setting.
        \item
            The statement of Theorem \ref{thm:main} only discusses
            regularity up to Lipschitz order.
            However, the proof technique can also be used to demonstrate
            further regularity provided $\Omega$ has a sufficiently smooth
            boundary.
            More concretely, for $k\ge0$, if $\Omega$ has $C^{k+3,1}$ boundary
            then it is possible to demonstrate $C^{k,1}$ bounds of order
            $\varepsilon^{-(k+1)}$.
        \item  
            More generally, our method of extension-reflection can be used to prove regularity for a system of equations of the form
            \begin{align*}
            \begin{cases}
            -\Delta{}u -k\nabla\dif u \ = \ f &
            \text{on }\Omega,\\
            \hfill u_{\nu} \ = \ 0 &\text{on }\partial\Omega, \\
            \hfill \partial_{\nu}u_{\tau} \ = \ 0 &\text{on }\partial\Omega,
            \end{cases}
            \end{align*}
            for $f\in L^p(\Omega,\mathbb{R}^2)$, $p\in (1,\infty)$.
            This follows since one can replace the uniform bound on the $L^4-$norm of $u$ from Section \ref{sec:Regularity} in the proofs of Lemma \ref{lem:UniformBoundWidehatU} and Lemma \ref{lem:LipschtzBound} by the $L^p-$norm of $f$.
    \end{enumerate}
\end{remarks}

The main idea to prove Theorem \ref{thm:main} is to extend a minimizer $u$ across the domain's boundary.
In the absence of the divergence penalization, this has been done in \cite{Sv} for non-curved boundaries, and in \cite{DiMiPi} for curved boundaries with a Dirichlet boundary condition.
The new difficulties are that:
\begin{enumerate}
    \item
        We consider only boundary restrictions on the normal part.
        This is distinct from Dirichlet boundary conditions since the
        tangential part of our function is free along the boundary.
    \item
        We do not impose additional regularity assumptions on our boundary
        data beyond what can be obtained by the Trace theorem.
        Thus, we are only working with boundary data which is in
        $W^{\frac{1}{2},2}$.
\end{enumerate}

Our boundary regularity assumption implies that the curvature of $\partial\Omega$ is bounded and therefore satisfies an inner/outer ball condition, enabling us to extend the outward unit normal and therefore also a unit tangent vector field. 
These two vector fields will be used to define suitable tangent-normal coordinates in a tubular neighbourhood of the boundary $\partial\Omega$.
We can then express the extension $U$ of the minimizer $u$ in these coordinates.

From the (elliptic) Euler-Lagrange equations of the energy \eqref{def:IntroEnergy} satisfied by the minimizer $u$, we are able to show that the extended function $U$ also satisfies a PDE with appropriate boundary conditions.
Due to the curvature of the boundary, the differential operators on the exterior must be corrected by including an anisotropic metric quantifying the distortion induced by the reflection. 
In Appendix \ref{app:polar-example} we detail the instructive case of the unit disk where we use polar coordinates to compute the metric and distortion factors.
Furthermore, the system of PDEs on the inside and outside can be combined into a single system of PDEs, a process that we refer to as ``gluing of PDEs''.
This process is successful thanks to the compatibility of our boundary conditions with the elliptic operator and the reflection.
This allows us to treat any point $x_0$ on the boundary $\partial\Omega$ as an interior point for the domain of the glued PDE.

Once this is accomplished, we can establish a $L^4-$bound on $u$ and therefore also on the extension $U$, similar to \cite{ABC,ABB,BPP}.
It is of crucial importance that all estimates are independent of $\varepsilon$ in order to obtain constants $C_1,C_2$ in Theorem \ref{thm:main} that are independent of $\varepsilon$.

The final step, inspired by \cite{BPP,CoKePh}, is to rescale $U$ to a function $\hat{U}$ defined locally.
Using the PDE for $U$ and the uniform $L^4-$bounds established before, we can apply elliptic regularity theory for systems satisfying Legendre-Hadamard condition to obtain a uniform $L^\infty$ and Lipschitz bound for $\hat{U}$.
Using compactness of $\partial\Omega$, we deduce bounds for $U$, which imply that the original minimizer $u$ enjoys the bounds in Theorem \ref{thm:main}.

\paragraph{Outline of the paper:}
In Section \ref{sec:Preliminaries} we introduce the necessary notation to prove the main result, in particular the tangent-normal coordinates. 
The gluing of the interior and reflected PDE is carried out in Section \ref{sec:PDEGluing}.
Related to this section we include two appendices.
In Appendix \ref{app:calculations} we include, for completeness, the detailed calculations necessary to ``glue" the PDEs.
In Appendix \ref{app:polar-example} we present the instructive but somewhat simpler case of reflecting when $\Omega$ is the unit disk. 
In Section \ref{sec:Regularity} the uniform $L^4-$bound is derived which is a necessary ingredient together with the rescaling argument in Section \ref{sec:Rescaling} to conclude.

\section{Preliminaries}\label{sec:Preliminaries}
In this section, we go over some preliminaries necessary for the rest of the article.
In particular, we will use them to demonstrate in Section \ref{sec:PDEGluing}, that we may extend the solution to our elliptic PDE on an enlarged domain provided it satisfies suitable boundary conditions.

\subsection{Notation}\label{subsec:Notation}
For $x=(x_{1},x_{2})\in\mathbb{R}^{2}$ we define
$x^{\perp}\coloneqq(-x_{2},x_{1})$.
% For $(x_{1},x_{2}),(y_{1},y_{2})\in\mathbb{R}^{2}$ we also define
% \begin{equation*}
%     (x_{1},x_{2})\times(y_{1},y_{2})\coloneqq{}x_{1}y_{2}-x_{2}y_{1}.
% \end{equation*}
Throughout we let $\Omega\subset\mathbb{R}^{2}$ be a bounded
simply-connected domain with $C^{2,1}-$boundary.
Suppose $0<r<\text{inj}(\partial\Omega)$, where
$\text{inj}(\partial\Omega)$ denotes the injectivity radius of
$\partial\Omega$, and set
\begin{align*}
    \Omega_{r}&\coloneqq
    \{x\in\Omega:0<\text{dist}(x,\partial\Omega)<r\},\\
    \widetilde{\Omega}_{r}&\coloneqq
    \{x\in\mathbb{R}^{2}:0\le\text{dist}(x,\partial\Omega)<r\}.%\label{def:DomainiExt}
\end{align*}
For a function $u\colon\Omega\to\mathbb{R}^{2}$,
with well-defined trace, we let $u_{\tau}$, $u_{\nu}$ denote, respectively,
the \emph{tangential part} of $u$ along
$\partial\Omega$ and the \emph{normal part} of $u$ along
$\partial\Omega$.
These are defined, respectively, by
\begin{equation*}%\label{def:traceparts}
    u_{\tau}\coloneqq{}u\cdot\tau,
    \hspace{15pt}u_{\nu}\coloneqq{}u\cdot\nu
\end{equation*}
where $\tau$ is the locally defined unit tangent vector of positive
orientation and $\nu\coloneqq\tau^{\perp}$.
For $\Omega$ as above we introduce, see \cite{ISS} for a formal treatment,
the function spaces
\begin{align*}
    W_{T}^{1,2}(\Omega;\mathbb{R}^{2})
    &\coloneqq\bigl\{u\in{}W^{1,2}(\Omega;\mathbb{R}^{2}):u_{\nu}=0\bigr\},
    % \\
    % %%
    % W_{\nu}^{1,2}(\Omega;\mathbb{R}^{2})
    % &\coloneqq\bigl\{u\in{}W^{1,2}(\Omega;\mathbb{R}^{2}):u_{\tau}=0\bigr\}.
\end{align*}
\noindent{}For
% appropriate functions $u\colon\Omega\to\mathbb{R}^{2}$ we let
% \begin{equation*}
%     \nabla\times{}u\coloneqq\frac{\partial{}u^{2}}{\partial{}x_{1}}
%     -\frac{\partial{}u^{1}}{\partial{}x_{2}}.
% \end{equation*}
% In addition, for
appropriate functions $\varphi\colon\Omega\to\mathbb{R}$ we let
\begin{equation*}
    \nabla^{\perp}\varphi\coloneqq(\nabla\varphi)^{\perp}.
\end{equation*}
Since we will be concerned with boundary behaviour we let, for $r>0$,
$B_{r,+}(0)$ denote
\begin{equation*}
    B_{r,+}(0)\coloneqq\{(x_{1},x_{2})\in\mathbb{R}^{2}:|(x_{1},x_{2})|<r,
    \hspace{5pt}x_{2}>0\}.
\end{equation*}
For $y\in\mathbb{R}$ we also use the notation
\begin{equation*}
    B_{r,+}(y,0)\coloneqq{}(y,0)+B_{r,+}(0).
\end{equation*}
We let $u_{\varepsilon}\in{}W_{T}^{1,2}(\Omega;\mathbb{R}^{2})$ be the minimizer
of the energy 
\begin{equation}\label{def:Energy}
    GL_{\varepsilon,\text{div}}(u,\Omega)\coloneqq
    \int_{\Omega}\!{}\biggl\{
    \frac{1}{2}|\nabla{}u|^{2}
    +\frac{\ktwo}{2}\bigl(\text{div}(u)\bigr)^{2}
    +\frac{1}{4\varepsilon^{2}}\bigl(|u|^{2}-1\bigr)^{2}\biggr\}.
\end{equation}
We will use, for a $\mathcal{L}^{2}$ measurable set $A\subseteq\mathbb{R}^{2}$,
$GL_{\varepsilon,\text{div}}(u,A)$ to denote this energy over the set $A$.
When no confusion will arise we will omit mention of the set from the notation.
We note that the minimizer satisfies, see equation $(2.3)$ of \cite{ABB},
\begin{equation}\label{def:PDE}
    \begin{cases}
        -\kone\Delta{}u_{\varepsilon}
        -\ktwo\nabla(\text{div}(u_{\varepsilon}))
        =\frac{u_{\varepsilon}}{\varepsilon^{2}}(1-|u_{\varepsilon}|^{2})&
        \text{on }\Omega,\\
        (u_{\varepsilon})_{\nu}(x)=0&\text{on }\partial\Omega,\\
        \partial_{\nu}(u_{\varepsilon})_{\tau}(x)=0&\text{on }\partial\Omega,
    \end{cases}
\end{equation}
in the sense that for all $v\in{}W_{T}^{1,2}(\Omega;\mathbb{R}^{2})$ we have
\begin{equation}\label{eq:WeakForm}
    \kone\int_{\Omega}\!{}\nabla{}u_{\varepsilon}:\nabla{}v
    +\ktwo\int_{\Omega}\!{}\bigl(\text{div}(u_{\varepsilon})\bigr)
    \bigl(\text{div}(v)\bigr)
    =\int_{\Omega}\!{}\frac{u_{\varepsilon}\cdot{}v}{\varepsilon^{2}}
    \bigl(1-|u_{\varepsilon}|^{2}\bigr).
\end{equation}
Note that here we define
$\partial_{\nu}(u_{\varepsilon})_{\tau}\in{}\bigl(W^{\frac{1}{2},2}(\partial\Omega)\bigr)^{*}$ by
\begin{equation}\label{def:NormalDeriv}
    \bigl<\partial_{\nu}(u_{\varepsilon})_{\tau},\varphi\bigr>
    \coloneqq
    -\kone\int_{\Omega}\!{}\nabla{}u_{\varepsilon}:\nabla{}\overline{\varphi}
    -\ktwo\int_{\Omega}\!{}\bigl(\text{div}(u_{\varepsilon})\bigr)
    \bigl(\text{div}(\overline{\varphi})\bigr)
    +\int_{\Omega}\!{}
    \frac{u_{\varepsilon}\cdot\overline{\varphi}}{\varepsilon^{2}}
    \bigl(1-|u_{\varepsilon}|^{2}\bigr)
\end{equation}
where $\overline{\varphi}\in{}W_{T}^{1,2}(\Omega;\mathbb{R}^{2})$
satisfies $\overline{\varphi}_{\tau}=\varphi$.
Observe that \eqref{def:NormalDeriv} is independent of the choice of extension
since the definition is linear  and if
$\overline{\varphi}_{1},\overline{\varphi}_{2}\in{}W_{T}^{1,2}(\Omega;\mathbb{R}^{2})$ both extend $\varphi$ then
$\overline{\varphi}_{1}-\overline{\varphi}_{2}\in{}W_{0}^{1,2}(\Omega;\mathbb{R}^{2})$.
% We note that by Stokes' Theorem we may define
% \begin{equation}\label{eq:TangentialDerivative}
%     \bigl<\partial_{\tau}u_{\nu},\varphi\bigr>
%     \coloneqq-\bigl<u_{\nu},\partial_{\tau}\varphi\bigr>,\hspace{20pt}
%     \bigl<\partial_{\tau}u_{\tau},\varphi\bigr>
%     \coloneqq-\bigl<u_{\tau},\partial_{\tau}\varphi\bigr>
% \end{equation}
% for $\varphi\in{}C^{0,1}(\partial\Omega)$ and
% $u_{\nu},u_{\tau}\in{}W^{\frac{1}{2},2}(\partial\Omega)$.

\subsection{Coordinates}\label{subsec:Coordinates}
We introduce tangent-normal coordinates similar to the ones constructed in
Subsection $1.2$ of \cite{ABC} in which all further details are provided.\\

We parametrize $\partial\Omega$ by its arclength, $L$, using a
$C^{2,1}$ curve
$\gamma=(\gamma_{1}(y_{1}),\gamma_{2}(y_{1}))$
where $\gamma\colon\mathbb{R}\slash{}L\mathbb{Z}\to\partial\Omega$.
We define the unit tangent and unit inward normal vectors, $\tau$ and
$\nu$, respectively, so that $\{\tau,\nu\}$ is positively oriented and
$\nu=\tau^{\perp}$.
We have
\begin{equation}\label{eq:DerivativeIdentities}
    \tau'(y_{1})=\kappa(y_{1})\nu(y_{1}),\hspace{15pt}
    \nu'(y_{1})=-\kappa(y_{1})\tau(y_{1}),
\end{equation}
where $\kappa\colon\mathbb{R}\slash{}L\mathbb{Z}\to\mathbb{R}$
is the signed curvature of $\partial\Omega$.
% \lbc{used nowhere ?}
%We also let $n\coloneqq-\nu$ denote the outward unit normal vector to $\partial\Omega$.
Next we will define a coordinate chart on $\Omega$ which extends to a larger domain.
For $r_{0}\in\bigl(0,\frac{\text{inj}(\partial\Omega)}{4}\bigr)$ we define a $C^{1,1}$ map
$X\colon(\mathbb{R}\slash{}L\mathbb{Z})\times(0,r_{0})\to
\Omega_{r_{0}}$
by
\begin{equation}\label{TangentNormal:Def}
    X(y_{1},y_{2})\coloneqq\gamma(y_{1})+y_{2}\nu(y_{1}).
\end{equation}
We observe that by \eqref{eq:DerivativeIdentities} we have
\begin{equation}\label{PartialsOfX}
    \frac{\partial{}X}{\partial{}y_{1}}
    =\bigl[1-y_{2}\kappa(y_{1})\bigr]\tau(y_{1}),\hspace{15pt}
    \frac{\partial{}X}{\partial{}y_{2}}
    =\nu(y_{1}).
\end{equation}
From \eqref{PartialsOfX} we have
\begin{equation*}
    JX(y)\coloneqq\det(\nabla{}X(y))=1-y_{2}\kappa(y_{1}).
\end{equation*}
By perhaps shrinking $r_{0}$ we may ensure $JX$ is bounded away from zero on
$(\mathbb{R}\slash{}L\mathbb{Z})\times(0,r_{0})$.
Using $X$ and choosing $r_{1}\in(0,r_{0})$ chosen sufficiently small,
we can form an atlas of coordinate charts
$\bigl\{(\mathcal{U}_{j},\psi_{j})\bigr\}_{j=1}^{N}$ about
$\partial\Omega$ covering a tubular neighbourhood of $\partial\Omega$ in $\Omega$ where
$\psi_{j}\colon{}B_{r_{1},+}(0)\to\mathcal{U}_{j}\cap\Omega$.
\ds{
We can choose $b_j\in\mathbb{R}/L\mathbb{Z}$ (corresponding to points on the boundary $\gamma(b_j)\in\partial\Omega$) such that one has $\mathcal{U}_j\cap\Omega=X(B_{r_1}(b_j,0))$}.
In addition, the coordinate charts can be chosen to satisfy
\begin{align*}
    \nabla\psi_{j}(y_{1},y_{2})&=
    \begin{bmatrix}
        (1-y_{2}\kappa(y_{1}))\tau(y_{1})& \nu(y_{1})
    \end{bmatrix},\\
    (\nabla\psi_{j}^{-1})(x)&=
    \begin{bmatrix}
        \frac{1}{1-(\psi_{j}^{-1})^{2}(x)\kappa((\psi_{j}^{-1})^{1}(x))}
        \tau^{T}\bigl((\psi_{j}^{-1})^{1}(x)\bigr)\\
        \nu^{T}\bigl((\psi_{j}^{-1})^{1}(x)\bigr)
    \end{bmatrix}.%\label{eq:InvDeriv}
\end{align*}
We may also adjoin
$\mathcal{U}_{0}\coloneqq\{x\in\Omega:\text{dist}(x,\partial\Omega)
>\frac{r_{1}}{4}\}$
paired with the identity map $\psi_{0}=\text{id}$ to obtain an atlas for
$\Omega$.
\\

Next, we extend $X$ to a map
$\widetilde{X}\coloneqq(\mathbb{R}\slash{}L\mathbb{Z})\times(-r_{0},r_{0})
\to\widetilde{\Omega}_{r_{0}}$ in a tubular neighbourhood of $\partial\Omega$ by the same definition as in
\eqref{TangentNormal:Def}, see also Figure \ref{fig:charts}.
\ds{Using this extension, we can define $\widetilde{\mathcal{U}}_j:=\widetilde{X}(B_{r_1}(b_j,0))$.}
This allows us to view each $\psi_{j}$, for $j=1,2,\ldots,N$, as the restriction to $B_{r_{1},+}(0)$ of
a map $\widetilde{\psi}_{j}\colon{}B_{r_{1}}(0)\to\widetilde{\mathcal{U}}_{j}$.
%where $\widetilde{\mathcal{U}}_{j}\coloneqq\widetilde{X}(B_{r_{1}}(y_{1},0))$ is the extended neighborhood at the boundary point $\gamma(y_1)$.
We can also arrange that the chart $\widetilde{\psi}_{j}$ satisfies, for $(y_1,s)\in\psi_{j}^{-1}(\mathcal{U}_{j})$, that
\begin{equation*}
    \text{dist}(\widetilde{\psi}_{j}(y_1,-s),\partial\Omega)
    =\text{dist}(\widetilde{\psi}_{j}(y_1,s),\partial\Omega).
\end{equation*}
We may pair $\widetilde{\mathcal{U}}_{0}=\mathcal{U}_{0}$
with $\widetilde{\psi}_{0}\coloneqq{}\psi_{0}$ in order to
extend the atlas to one for $\widetilde{\Omega}_{r_{1}}$.
% We let $\{\widetilde{\rho}_{j}\}_{j=0,1,2,\ldots,N}$
% be a smooth partition of unity subordinate to the cover
% $\{\widetilde{\mathcal{U}}_{j}\}_{j=0,1,2,\ldots,N}$.
% We observe that since
% $\Omega_{\frac{r_{1}}{4}}\cap\widetilde{\mathcal{U}}_{0}=\varnothing$ then
% \begin{equation}\label{eq:DisjointPartitionOfUnity}
%     \sum_{j=1}^{N}\widetilde{\rho}_{j}=1\hspace{15pt}
%     \text{on }\Omega_{\frac{r_{1}}{4}}.
% \end{equation}
%
For the coordinate charts, $\{\widetilde{\psi}_{j}\}_{j=0}^{N}$, we let
$\widetilde{u}_{j}$ denote the coordinate representation in the $j^{\text{th}}$ coordinate map.
That is, we let
\begin{equation*}
    \widetilde{u}_{j}\coloneqq{}u\circ\widetilde{\psi}_{j}.
\end{equation*}
We also define, for $u$ satisfying \eqref{def:PDE},
$U\colon\widetilde{\Omega}\to\mathbb{R}^{2}$ by
\begin{equation}\label{def:uExtension}
    U(x)\coloneqq
    \begin{cases}
        u(x)&\text{for }x\in\Omega,\\
        \widetilde{u}_{\tau}\bigl(R\widetilde{\psi}_{j}^{-1}(x)\bigr)
        \tau\bigl((\widetilde{\psi}_{j}^{-1})^{1}(x)\bigr)
        -\widetilde{u}_{\nu}\bigl(R\widetilde{\psi}_{j}^{-1}(x)\bigr)
        \nu\bigl((\widetilde{\psi}_{j}^{-1})^{1}(x)\bigr)&
        \text{for }x\in\widetilde{\mathcal{U}}_{j}\setminus\Omega,
    \end{cases}
\end{equation}
where
$R\coloneqq\mathbf{e}_{1}\mathbf{e}_{1}^{T}-\mathbf{e}_{2}\mathbf{e}_{2}^{T}$.
Grosso modo, multiplying a vector by this matrix $R$ results in a switch of sign for the normal component, while preserving the tangential component.
A central tool will later be this transformation $R$ in the original variables on the domain $\Omega$. 
This reflection ensures continuity of the tangential component as well as the normal derivative of the extended function $U$, thus preserving the $W^{1,2}-$regularity and the boundary conditions from \eqref{def:PDE}.

\begin{figure}
\begin{center}
\begin{tikzpicture}[scale=1]
% define parameters : 
\pgfmathsetmacro{\r}{1} 	% radius of half-circle 
\pgfmathsetmacro{\R}{3} 	% size of individual images
\pgfmathsetmacro{\dh}{1} 	% horizontal distance between images
\pgfmathsetmacro{\dv}{0} 	% vertical distance between images
\pgfmathsetmacro{\a}{0.94} 	% cassini oval parameter
\pgfmathsetmacro{\p}{0.2} 	% radius for innner and outer neighborhood of \partial \Omega
\pgfmathsetmacro{\ts}{90} 	% angle for ball on \partial \Omega

% *********************************************
% DRAW INITIAL DOMAIN, LEFT
% *********************************************

% exteior shape
\draw[fill=blue!35!white, line width=0.5, dashed, domain=0:360, variable=\t] plot[samples=200] 
( {1.5*\R + \r*sqrt(\a*\a*cos(2*\t)+sqrt(1 - \a*\a*sin(2*\t)*sin(2*\t)))*sin(\t)
+(-\p)*(-sin(\t)*sqrt(\a*\a*cos(2*\t)+sqrt(1 - \a*\a*sin(2*\t)*sin(2*\t))) + cos(\t)*((-2*\a*\a*sin(2*\t) + (-4*\a*\a*sin(2*\t)*cos(2*\t))/(2*sqrt(1-\a*\a*sin(2*\t)*sin(2*\t))))/(2*sqrt(\a*\a*cos(2*\t)+sqrt(1 - \a*\a*sin(2*\t)*sin(2*\t))))))/sqrt(\a*\a*cos(2*\t)+sqrt(1 - \a*\a*sin(2*\t)*sin(2*\t)) + pow((-2*\a*\a*sin(2*\t) + (-4*\a*\a*sin(2*\t)*cos(2*\t))/(2*sqrt(1-\a*\a*sin(2*\t)*sin(2*\t))))/(2*sqrt(\a*\a*cos(2*\t)+sqrt(1 - \a*\a*sin(2*\t)*sin(2*\t)))),2))
} ,
{\r*sqrt(\a*\a*cos(2*\t)+sqrt(1 - \a*\a*sin(2*\t)*sin(2*\t)))*cos(\t)
-(-\p)*(cos(\t)*sqrt(\a*\a*cos(2*\t)+sqrt(1 - \a*\a*sin(2*\t)*sin(2*\t))) + sin(\t)*((-2*\a*\a*sin(2*\t) + (-4*\a*\a*sin(2*\t)*cos(2*\t))/(2*sqrt(1-\a*\a*sin(2*\t)*sin(2*\t))))/(2*sqrt(\a*\a*cos(2*\t)+sqrt(1 - \a*\a*sin(2*\t)*sin(2*\t))))))/sqrt(\a*\a*cos(2*\t)+sqrt(1 - \a*\a*sin(2*\t)*sin(2*\t)) + pow((-2*\a*\a*sin(2*\t) + (-4*\a*\a*sin(2*\t)*cos(2*\t))/(2*sqrt(1-\a*\a*sin(2*\t)*sin(2*\t))))/(2*sqrt(\a*\a*cos(2*\t)+sqrt(1 - \a*\a*sin(2*\t)*sin(2*\t)))),2))
} );

% ball on boundary
\draw[fill=blue!75!white] ({1.5*\R + \r*sqrt(\a*\a*cos(2*\ts)+sqrt(1 - \a*\a*sin(2*\ts)*sin(2*\ts)))*sin(\ts)} ,
{\r*sqrt(\a*\a*cos(2*\ts)+sqrt(1 - \a*\a*sin(2*\ts)*sin(2*\ts)))*cos(\ts)} ) circle[radius=0.1];

% domain boundary
\draw[fill=blue!20!white, line width=1, domain=0:360, variable=\t] plot[samples=200] 
( {1.5*\R + \r*sqrt(\a*\a*cos(2*\t)+sqrt(1 - \a*\a*sin(2*\t)*sin(2*\t)))*sin(\t)} ,
{\r*sqrt(\a*\a*cos(2*\t)+sqrt(1 - \a*\a*sin(2*\t)*sin(2*\t)))*cos(\t)} 
);

% draw circle inside domain
\draw[] ({1.5*\R + \r*sqrt(\a*\a*cos(2*\ts)+sqrt(1 - \a*\a*sin(2*\ts)*sin(2*\ts)))*sin(\ts)} ,
{\r*sqrt(\a*\a*cos(2*\ts)+sqrt(1 - \a*\a*sin(2*\ts)*sin(2*\ts)))*cos(\ts)} ) circle[radius=0.1];

% interior
\draw[fill=white, line width=0.5, dotted, domain=0:360, variable=\t] plot[samples=200] 
( {1.5*\R + \r*sqrt(\a*\a*cos(2*\t)+sqrt(1 - \a*\a*sin(2*\t)*sin(2*\t)))*sin(\t)
+\p*(-sin(\t)*sqrt(\a*\a*cos(2*\t)+sqrt(1 - \a*\a*sin(2*\t)*sin(2*\t))) + cos(\t)*((-2*\a*\a*sin(2*\t) + (-4*\a*\a*sin(2*\t)*cos(2*\t))/(2*sqrt(1-\a*\a*sin(2*\t)*sin(2*\t))))/(2*sqrt(\a*\a*cos(2*\t)+sqrt(1 - \a*\a*sin(2*\t)*sin(2*\t))))))/sqrt(\a*\a*cos(2*\t)+sqrt(1 - \a*\a*sin(2*\t)*sin(2*\t)) + pow((-2*\a*\a*sin(2*\t) + (-4*\a*\a*sin(2*\t)*cos(2*\t))/(2*sqrt(1-\a*\a*sin(2*\t)*sin(2*\t))))/(2*sqrt(\a*\a*cos(2*\t)+sqrt(1 - \a*\a*sin(2*\t)*sin(2*\t)))),2))
} ,
{\r*sqrt(\a*\a*cos(2*\t)+sqrt(1 - \a*\a*sin(2*\t)*sin(2*\t)))*cos(\t)
-\p*(cos(\t)*sqrt(\a*\a*cos(2*\t)+sqrt(1 - \a*\a*sin(2*\t)*sin(2*\t))) + sin(\t)*((-2*\a*\a*sin(2*\t) + (-4*\a*\a*sin(2*\t)*cos(2*\t))/(2*sqrt(1-\a*\a*sin(2*\t)*sin(2*\t))))/(2*sqrt(\a*\a*cos(2*\t)+sqrt(1 - \a*\a*sin(2*\t)*sin(2*\t))))))/sqrt(\a*\a*cos(2*\t)+sqrt(1 - \a*\a*sin(2*\t)*sin(2*\t)) + pow((-2*\a*\a*sin(2*\t) + (-4*\a*\a*sin(2*\t)*cos(2*\t))/(2*sqrt(1-\a*\a*sin(2*\t)*sin(2*\t))))/(2*sqrt(\a*\a*cos(2*\t)+sqrt(1 - \a*\a*sin(2*\t)*sin(2*\t)))),2))
} );

% x(t) = cos(t)*sqrt(C)
% y(t) = sin(t)*sqrt(C)
% C(t) = a^2*cos(2*t) + sqrt(b^4 - a^2*sin^2(2*t)), b=1 here

%(sqrt(C))' = (-2*\a*\a*sin(2*\t) + (-4*\a*\a*sin(2*\t)*cos(2*\t))/(2*sqrt(1-\a*\a*sin(2*\t)*sin(2*\t))))/(2*sqrt(\a*\a*cos(2*\t)+sqrt(1 - \a*\a*sin(2*\t)*sin(2*\t))))

%sqrt(C) = sqrt(\a*\a*cos(2*\t)+sqrt(1 - \a*\a*sin(2*\t)*sin(2*\t)))

% norm((x',y'))^2 = C + (sqrt(C)')^2
% norm((x',y')) = sqrt(\a*\a*cos(2*\t)+sqrt(1 - \a*\a*sin(2*\t)*sin(2*\t)) + pow((-2*\a*\a*sin(2*\t) + (-4*\a*\a*sin(2*\t)*cos(2*\t))/(2*sqrt(1-\a*\a*sin(2*\t)*sin(2*\t))))/(2*sqrt(\a*\a*cos(2*\t)+sqrt(1 - \a*\a*sin(2*\t)*sin(2*\t)))),2))

% labels
\node[] at (1.5*\R,-0.8*\r) {$\Omega$};
%\node[] at (1.5*\R,0.8*\r) {$\mathcal{U}_j$};

% *********************************************
% ARROWS, CENTER
% *********************************************
\draw[line width=1,->] (2.0*\R+\dh/2,\r/2) to [out=150,in=30] (1.5*\R+\dh,\r/2);
\node[] at (1.75*\R+3*\dh/4,\r/2+0.4) {$X$};

%\draw[line width=1,->] (2*\R+\dh/2,0) -- (1.5*\R+\dh,0);
%\node[] at (1.75*\R+3*\dh/4,0.3) {$\gamma$};

\draw[line width=1,->] (2*\R+\dh/2,-\r/2) to [out=-150,in=-30] (1.5*\R+\dh,-\r/2);
\node[] at (1.75*\R+3*\dh/4,-\r/2-0.4) {$\widetilde{X}$};

% *********************************************
% DRAW CHART, RIGHT
% *********************************************

% draw shaded region above (interor)
\fill[blue!20!white] (2*\R+\dh,0) rectangle (3.1*\R+\dh,1*\r);

% draw shaded region below (exterior)
\fill[blue!35!white] (2*\R+\dh,-1*\r) rectangle (3.1*\R+\dh,0);

% draw B_{r_1,+}(b_j)
\draw[] ({2.5*\R+\dh-\r/2},0) -- ({2.5*\R+\dh+\r/2},0) arc (0:180:{\r/2}) -- cycle;

% draw B_{r_1,-}(b_j)
\draw[fill=blue!75!white] ({2.5*\R+\dh-\r/2},0) -- ({2.5*\R+\dh+\r/2},0) arc (0:-180:{\r/2}) -- cycle;

% y_1 axis
\draw[line width=1] (2*\R+\dh,0)--(3.1*\R+\dh,0);
\draw[->] (3.1*\R+\dh-0.1,0)--(3.1*\R+\dh+0.05,0) node[below] {$y_1$};
% labels on y_1 axis
\draw[-] (2.5*\R+\dh,-0.1)--(2.5*\R+\dh,0.1) node[above=-0.1cm] {$b_j$};
\draw[-] (3*\R+\dh,-0.1)--(3*\R+\dh,0.1) node[above=-0.05cm] {$L$};

% y_2 axis
\draw[->] (2.1*\R+\dh,-1*\r)--(2.1*\R+\dh,1.2*\r) node[above] {$y_2$};
% labels on y_2 axis
\draw[dotted, line width=0.5] (3.1*\R+\dh,\r)--(2*\R+\dh,\r);
\node[] at (2.1*\R+\dh+0.2,\r-0.2) {$r_0$};
\draw[dashed, line width=0.5] (3.1*\R+\dh,-\r)--(2*\R+\dh,-\r);
\node[] at (2.1*\R+\dh+0.2,-\r-0.2) {$-r_0$};

\end{tikzpicture}

%\subimport{}{figure_global_charts} 
\caption{Illustration of the construction of $X$ and the extension $\widetilde{X}$.}
\label{fig:charts}
\end{center}
\end{figure}
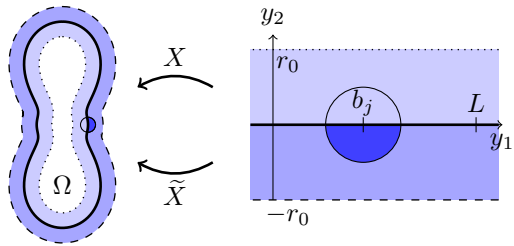

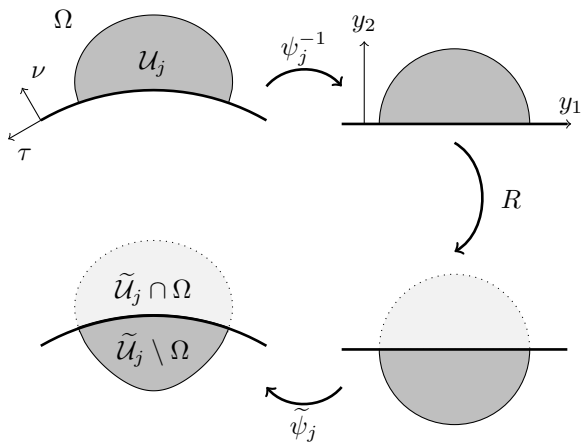
\begin{figure}
\begin{center}
\begin{tikzpicture}[scale=1]
% define parameters : 
\pgfmathsetmacro{\r}{1} 	% radius of half-circle 
\pgfmathsetmacro{\R}{3} 	% size of individual images
\pgfmathsetmacro{\dh}{1} 	% horizontal distance between images
\pgfmathsetmacro{\dv}{0} 	% vertical distance between images

% *********************************************
% DRAW INITIAL DOMAIN, TOP LEFT
% *********************************************
% draw shape
\draw[fill=gray!50!white, domain=70.5:109.5, variable=\t] plot[samples=100] 
({1.5*\R+\R*cos(\t)+(sign(\r*\r - pow(\R*cos(\t),2))*sqrt(abs(\r*\r - pow(\R*cos(\t),2))))*cos(\t)}, 
{-0.85*\R+\R*sin(\t)+(sign(\r*\r - pow(\R*cos(\t),2))*sqrt(abs(\r*\r - pow(\R*cos(\t),2))))*sin(\t)}) -- cycle;
% draw boundary Omega
\draw[fill=white, line width=1, domain=60:120, variable=\t] plot ({1.5*\R+\R*cos(\t)}, {-0.85*\R+\R*sin(\t)});
% add normal and tangential vector
\draw[->] ({1.5*\R+\R*cos(120)}, {-0.85*\R+\R*sin(120)}) -- ({1.5*\R+\R*cos(120)+(0.5)*cos(120)}, 
{-0.85*\R+\R*sin(120)+(0.5)*sin(120)}) node[above right] {$\nu$};
\draw[->] ({1.5*\R+\R*cos(120)}, {-0.85*\R+\R*sin(120)}) -- ({1.5*\R+\R*cos(120)-(0.5)*sin(120)}, 
{-0.85*\R+\R*sin(120)+(0.5)*cos(120)}) node[below right] {$\tau$};
% add labels
\node[] at (1.1*\R,1.4*\r) {$\Omega$};
\node[] at (1.5*\R,0.8*\r) {$\mathcal{U}_j$};

% *********************************************
\draw[line width=1,->] (2*\R,\r/2) to [out=50,in=130] (2*\R+\dh,\r/2);
\node[] at (2*\R+\dh/2,\r/2+0.5) {$\psi_j^{-1}$};
% *********************************************
% DRAW CHART BEFORE REFLECTION, TOP RIGTH
% *********************************************
\draw[fill=gray!50!white] ({2.5*\R+\dh},0) -- ({2.5*\R+\dh+\r},0) arc (0:180:{\r}) -- cycle;
\draw[line width=1] (2*\R+\dh,0)--(3*\R+\dh,0);
\draw[->] (3*\R+\dh-0.1,0)--(3*\R+\dh+0.05,0) node[above] {$y_1$};
\draw[->] (2.1*\R+\dh,0)--(2.1*\R+\dh,\r+0.1) node[above] {$y_2$};

% *********************************************
\draw[line width=1,->] (2.5*\R+\dh,{-0.25}) to [out=-30,in=30] (2.5*\R+\dh,{-\R/2-\dv-0.2});
\node[] at ({2.5*\R+\dh+0.75},{-\R/4-\dv/2-0.25}) {$R$};
% *********************************************
% DRAW CHART AFTER REFLECTION, BOTTOM RIGHT
% *********************************************
\draw[fill=gray!50!white] ({2.5*\R+\dh},{-\R-\dv}) -- ({2.5*\R+\dh+\r},{-\R-\dv}) arc (0:-180:{\r}) -- cycle;
\draw[dotted,fill=gray!10!white] ({2.5*\R+\dh},{-\R-\dv}) -- ({2.5*\R+\dh+\r},{-\R-\dv}) arc (0:180:{\r}) -- cycle;
\draw[line width=1] (2*\R+\dh,{-\R-\dv})--(3*\R+\dh,{-\R-\dv});

% *********************************************
\draw[line width=1,->] (2*\R+\dh,{-\R-\dv-\r/2}) to [out=-130,in=-50] (2*\R,{-\R-\dv-\r/2});
\node[] at (2*\R+\dh/2,{-\R-\dv-\r/2-0.5}) {$\widetilde{\psi}_j$};
% *********************************************
% DRAW REFLECTED DOMAIN, BOTTOM LEFT
% *********************************************
% draw old shape
\draw[dotted, fill=gray!10!white, domain=70.5:109.5, variable=\t] plot[samples=100] 
({1.5*\R+\R*cos(\t)+(sign(\r*\r - pow(\R*cos(\t),2))*sqrt(abs(\r*\r - pow(\R*cos(\t),2))))*cos(\t)}, 
{-\R-\dv-0.85*\R+\R*sin(\t)+(sign(\r*\r - pow(\R*cos(\t),2))*sqrt(abs(\r*\r - pow(\R*cos(\t),2))))*sin(\t)}) -- cycle;
% draw reflected shape
\draw[fill=gray!50!white, domain=70.525:109.475, variable=\t] plot[samples=100] 
({1.5*\R+\R*cos(\t)-(sign(\r*\r - pow(\R*cos(\t),2))*sqrt(abs(\r*\r - pow(\R*cos(\t),2))))*cos(\t)}, 
{-\R-\dv-0.85*\R+\R*sin(\t)-(sign(\r*\r - pow(\R*cos(\t),2))*sqrt(abs(\r*\r - pow(\R*cos(\t),2))))*sin(\t)}) arc (109.475:70.525:{\R});
% draw boundary Omega 
\draw[line width=1, domain=60:120, variable=\t] plot ({1.5*\R+\R*cos(\t)}, {-\R-\dv-0.85*\R+\R*sin(\t)});

\node[] at (1.5*\R,-\R-\dv+0.8*\r) {$\widetilde{\mathcal{U}}_j\cap\Omega$};
\node[] at (1.5*\R,-\R-\dv+0.0*\r) {$\widetilde{\mathcal{U}}_j\setminus\Omega$};
\end{tikzpicture}

%\subimport{}{figure_change_of_coordinates} 
\caption{We use the unit normal $\nu$ and tangent $\tau$ to parameterize the interior and exterior of the domain $\Omega$ via charts $\widetilde{\psi}_{j}$, in which the reflection $R$ can be represented by a simple sign change of $y_2$.}
\label{fig:coordinates}
\end{center}
\end{figure}

\section{PDE Gluing}
\label{sec:PDEGluing}

In this section we show that the extended function from
\eqref{def:uExtension} weakly satisfies a PDE similar to \eqref{eq:WeakForm} in $\widetilde{\mathcal{U}}_{j}$.
While this is typically done by establishing a PDE in local coordinates which flatten out the boundary into a half-ball, we instead demonstrate that a PDE is satisfied in the reflected $\widetilde{U}_j$ (i.e. in subsets of $\widetilde{\Omega}$) for each $j$ for which the boundary is still curved. 
This makes the rescaling approach for the PDE in Section \ref{sec:Rescaling} notationally simpler.
Note that the approach of extending the solution of a PDE has been done before for the one-constant approximation of the Landau-de Gennes functional with Dirichlet boundary condition (see Section 2.2 of \cite{DiMiPi}).
We extend this approach to handle the additional divergence term and the new mixed Dirichlet-Neumann boundary condition.

Before we state the main lemma of this section we introduce a bit of notation
that will be necessary.
For each $j=1,2,\ldots,N$ let
$\sigma_{j}\colon\widetilde{\mathcal{U}_{j}}\to\widetilde{\mathcal{U}}_{j}$
denote the function defined by
\begin{equation*}%\label{def:Reflection}
    \sigma_{j}(x)\coloneqq
    \begin{cases}
        x&\text{for }x\in\widetilde{\mathcal{U}}_{j}\cap\Omega\\
        \widetilde{\psi}_{j}\bigl(R\widetilde{\psi}_{j}^{-1}(x)\bigr)&\text{for }
        x\in\widetilde{\mathcal{U}}_{j}\setminus\Omega.
    \end{cases}
\end{equation*}
This function can be used on the enlarged coordinate chart $\widetilde{\mathcal{U}}_{j}$ to pass from an exterior point of $\Omega$ into its interior, see Figure \ref{fig:coordinates}, while leaving interior points invariant.
We also introduce the function $\mathfrak{R}_{j}$, defined on
$\widetilde{\mathcal{U}}_{j}\times\mathbb{R}^{2}$ by
\begin{equation*}
    \mathfrak{R}_{j}(x,z)\coloneqq
    \Bigl[\tau\bigl((\widetilde{\psi}_{j}^{-1})^{1}(x)\bigr)
    \tau\bigl((\widetilde{\psi}_{j}^{-1})^{1}(x)\bigr)^{T}-
    \nu\bigl((\widetilde{\psi}_{j}^{-1})^{1}(x)\bigr)
    \nu\bigl((\widetilde{\psi}_{j}^{-1})^{1}(x)\bigr)
    \Bigr]z,
\end{equation*}
which corresponds to the aforementioned matrix $R$ written in the original variables.
We observe that
    \begin{equation*}
        \nabla\sigma_{j}(\sigma_{j}(x))\nabla\sigma_{j}(\sigma_{j}(x))^{T}=
        \begin{cases}
            I_{2}&\text{for }x\in\widetilde{\mathcal{U}}_{j}\cap\Omega,\\
            \nabla\widetilde{\psi}_{j}\bigl(\widetilde{\psi}_{j}^{-1}(x)\bigr)
            \mathcal{M}(x)
            \nabla\widetilde{\psi}_{j}\bigl(\widetilde{\psi}_{j}^{-1}(x)\bigr)^{T}&\text{for }x\in\widetilde{\mathcal{U}}_{j}\setminus\Omega
        \end{cases}
    \end{equation*}
    where
    \begin{equation*}
        \mathcal{M}(x)\coloneqq
        \begin{pmatrix}
                \frac{1}{\bigl(1-|(\widetilde{\psi}_{j}^{-1})^{2}(x)|
                \kappa\bigl((\widetilde{\psi}_{j}^{-1})^{1}(x)\bigr)\bigr)^{2}}& 0\\
                0& 1
        \end{pmatrix}.
    \end{equation*}
    Corresponding to this matrix we introduce the inner product defined for $x\in\widetilde{\mathcal{U}}_{j}$, for $j=1,2,\ldots,N$, by
    \begin{equation*}
        \bigl<v,w\bigr>_{j}\coloneqq{}
        |\det(\nabla\sigma_{j}(x))|
        v^{T}\nabla\sigma_{j}\bigl(\sigma_{j}(x)\bigr)
        \nabla\sigma_{j}\bigl(\sigma_{j}(x)\bigr)^{T}w
    \end{equation*}
    where $v,w\in\mathbb{R}^{2}$.
    This inner product will enter when verifying that the PDEs glue properly and to simplify the notation hereafter.
    In addition, using the notation
    \begin{equation*}
        \mathcal{D}(x)\coloneqq
        \begin{cases}
            1& \text{for }x\in\widetilde{U}_{j}\cap\Omega,\\
            \frac{1-(\widetilde{\psi}_{j}^{-1})^{2}(x)\kappa\bigl((\widetilde{\psi}_{j}^{-1})^{1}(x)\bigr)}
            {1+(\widetilde{\psi}_{j}^{-1})^{2}(x)\kappa\bigl((\widetilde{\psi}_{j}^{-1})^{1}(x)\bigr)},
            &\text{for }x\in\widetilde{U}_{j}\setminus\Omega,
        \end{cases}
    \end{equation*}
    previously called the \emph{distortion factor}, we define
    \begin{equation*}
        \text{div}_{j}(w)(x)\coloneqq
        |\det(\nabla\sigma_{j}(x))|^{\frac{1}{2}}
        \biggl[
        \mathcal{D}(x)
        \partial_{\tau}\biggl(w(x)\cdot\tau\bigl((\widetilde{\psi}_{j}^{-1})^{1}(x)\bigr)\biggr)
        +\partial_{\nu}\biggl(w(x)\cdot\nu\bigl((\widetilde{\psi}_{j}^{-1})^{1}(x)\bigr)\biggr)
        \biggr]
    \end{equation*}
    for $x\in\widetilde{\mathcal{U}}_{j}$
    for $j=1,2,\ldots,N$ and functions,
    $w$, of appropriate regularity.
    Note that $\text{div}_{j}$ here is notation and does not match the
    definition of divergence from Riemannian geometry.
    In particular, we use this notation to denote a quantity resembling divergence
    but including a distortion factor.
    We also note that our inner product matches the metric
    used in \cite{DiMiPi}.
    Finally, for notational convenience, we let
    $\mathcal{G}_{j}\colon\widetilde{\Omega}\times{}B_{r_{1}}(0)
    \to{}M_{2\times2}(\mathbb{R})$, for $j=1,2,\ldots,N$,
    denote the matrix-valued functions given by
    \begin{equation*}%\label{def:CoordGrad}
        \mathcal{G}_j(x,y)\coloneqq
        \begin{pmatrix}
            \frac{1}{(1-y_{2}\kappa(y_{1}))^{2}}& 0\\
            0& 1
        \end{pmatrix}
        \, ,
    \end{equation*}
    where $\kappa$ denotes the curvature of $\partial\Omega$ at the point $x=\widetilde{\psi}_j(y_1,0)$.
    
Here and in the following sections, a constant $C>0$ is generic and can change value from one line to the next.

\begin{lemma}\label{lem:PDEGluing}
    Suppose $u\in{}W_{T}^{1,2}(\Omega;\mathbb{R}^{2})$ solves
    \eqref{eq:WeakForm} and $U$ is the extension of $u$ defined as in
    \eqref{def:uExtension}.
    Then there is a function $\widetilde{\mathcal{F}}_{j}(x,z,p)$, for j=1,2,\ldots,N, satisfying
    \begin{equation*}
        |\widetilde{\mathcal{F}}_{j}(x,z,p)|\le{}C(\Omega,\ktwo)\bigl[1+|z|+|p|\bigr],
    \end{equation*}
    such that
    \begin{align}
        \sum_{i=1}^{2}\kone\int_{\widetilde{\mathcal{U}}_{j}}\!{}
        \bigl<\nabla{}U^{i},\nabla{}v\bigr>_{j}
        +\ktwo\int_{\widetilde{\mathcal{U}}_{j}}\!{}
        \emph{div}_{j}(U)\emph{div}_{j}(v)
        =&\int_{\widetilde{\mathcal{U}}_{j}\cap\Omega}\!{}
        \frac{U(x)\cdot{}v(x)}{\varepsilon^{2}}(1-|U(x)|^{2})
        \nonumber\\
        &+\int_{\widetilde{\mathcal{U}}_{j}\setminus\Omega}\!{}
        |\det(\nabla\sigma_{j}(x))|
        \frac{\mathfrak{R}_{j}\bigl(x,U(x)\bigr)\cdot{}v(x)}{\varepsilon^{2}}(1-|U(x)|^{2})
        \nonumber\\
        &+\int_{\widetilde{\mathcal{U}}_{j}}\!{}\widetilde{\mathcal{F}}_{j}
        \bigl(x,U(x),\nabla{}U(x)\bigr)\cdot{}v(x),
        \label{def:ExtendedPDE}
    \end{align}
    for $v\in{}W_{0}^{1,2}(\widetilde{\mathcal{U}}_{j};\mathbb{R}^{2})$ and
    $j=1,2,\ldots,N$.
    If $\Omega$ has $C^{3,1}-$boundary then $F_{j}$ also satisfies
    \begin{equation*}
        |\nabla_{z,p}\widetilde{\mathcal{F}}_{j}(x,z,p)|\le{}C(\Omega,\ktwo)
    \end{equation*}
    for all $j=1,2,\ldots,N$.
\end{lemma}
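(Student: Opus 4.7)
The plan is to take a test function $v\in W_{0}^{1,2}(\widetilde{\mathcal{U}}_{j};\mathbb{R}^{2})$, split the left-hand side of \eqref{def:ExtendedPDE} as a sum of an integral over $\widetilde{\mathcal{U}}_{j}\cap\Omega$ and one over $\widetilde{\mathcal{U}}_{j}\setminus\Omega$, and reduce the latter, via the change of variables $\sigma_{j}$, to an integral over $\widetilde{\mathcal{U}}_{j}\cap\Omega$. On the interior piece, $|\det\nabla\sigma_{j}|=1$, $\nabla\sigma_{j}=I_{2}$, and $\mathcal{D}=1$, so $\langle\cdot,\cdot\rangle_{j}$ and $\operatorname{div}_{j}$ coincide with the Euclidean inner product and divergence and one recognises \eqref{eq:WeakForm} directly.

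On the exterior piece I would pull back through $\widetilde{\psi}_{j}$ to $B_{r_{1},-}(0)$, use $R$ to reflect into $B_{r_{1},+}(0)$, and then push forward by $\widetilde{\psi}_{j}$ to $\widetilde{\mathcal{U}}_{j}\cap\Omega$. By the very definition \eqref{def:uExtension}, the exterior value $U(x)$ is transported to $u(\sigma_{j}(x))$ with the normal component reversed; that is, after the change of variables, $U\circ\sigma_{j}^{-1}$ in the exterior corresponds to $\mathfrak{R}_{j}(\cdot,u)$ in the interior. The weighted inner product $\langle\nabla U^{i},\nabla v\rangle_{j}$ is exactly the standard Dirichlet form pulled through $\sigma_{j}$: the factor $\nabla\sigma_{j}\nabla\sigma_{j}^{T}$ converts the pulled-back Euclidean gradient into the actual gradient, and the Jacobian $|\det\nabla\sigma_{j}|$ accounts for the change of measure. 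Likewise, $\operatorname{div}_{j}$ with distortion $\mathcal{D}$ is designed so that $\operatorname{div}_{j}(U)\operatorname{div}_{j}(v)$ transforms cleanly into $\operatorname{div}(u)\operatorname{div}(\overline{v})$ up to lower-order terms coming from derivatives of $\tau$ and $\nu$.

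The crucial step is the recombination: summing the two integrals over $\widetilde{\mathcal{U}}_{j}\cap\Omega$, I would build a single test function $\overline{v}(x)\coloneqq v(x)+\nabla\sigma_{j}(\sigma_{j}(x))^{T}\,\mathfrak{R}_{j}(\sigma_{j}^{-1}(x),v(\sigma_{j}^{-1}(x)))$ on $\widetilde{\mathcal{U}}_{j}\cap\Omega$, extended by $0$ to $\Omega$. The reflection $R$ reverses the sign of the normal component, so $\overline{v}_{\nu}\equiv 0$ on $\partial\Omega\cap\widetilde{\mathcal{U}}_{j}$ while $\overline{v}_{\tau}$ receives a doubled contribution; thus $\overline{v}\in W_{T}^{1,2}(\Omega;\mathbb{R}^{2})$, and testing \eqref{eq:WeakForm} against $\overline{v}$ yields the two right-hand side integrals of \eqref{def:ExtendedPDE}. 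The mismatch between $\operatorname{div}_{j}(U)\operatorname{div}_{j}(v)$ and $\operatorname{div}(u)\operatorname{div}(\overline{v})$, and between $\langle\nabla U^{i},\nabla v\rangle_{j}$ and $\nabla u^{i}:\nabla\overline{v}$, produces precisely the lower-order terms that I collect into $\widetilde{\mathcal{F}}_{j}(x,U(x),\nabla U(x))\cdot v(x)$. These terms are generated by $\tau'=\kappa\nu$, $\nu'=-\kappa\tau$ and by derivatives of $|\det\nabla\sigma_{j}|$ and $\mathcal{D}$; they depend linearly on $U$ and $\nabla U$ with coefficients built from $\kappa$, $\kappa'$, and $\tau,\nu$, which are bounded on $\partial\Omega$ under the $C^{2,1}$ assumption, giving the estimate $|\widetilde{\mathcal{F}}_{j}|\leq C(1+|z|+|p|)$. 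Upgrading to $C^{3,1}$ boundary regularity ensures the coefficients are themselves Lipschitz, hence the additional $z,p$-derivative bound.

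The main obstacle I expect is the bookkeeping in the recombination step: verifying cleanly that the tangential part of $\overline{v}$ is continuous across $\partial\Omega\cap\widetilde{\mathcal{U}}_{j}$ while its normal part lies in $W^{1,2}_{T}$, so that the boundary integrals arising from the mixed Dirichlet-Neumann condition in \eqref{def:PDE} cancel, and then identifying all the curvature-generated correction terms in a single bounded remainder $\widetilde{\mathcal{F}}_{j}$ with the stated growth. The deferred algebraic verification is exactly the content of the appendix on the detailed calculations, so I would formulate the clean combinatorial statement here and refer to Appendix \ref{app:calculations} for the explicit formulas.
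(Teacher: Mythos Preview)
Your proposal is correct and follows essentially the same route as the paper: split the left-hand side into interior and exterior pieces, reflect the exterior integral through $\sigma_{j}$ back into $\widetilde{\mathcal{U}}_{j}\cap\Omega$, combine the two into a single admissible test function with vanishing normal trace (the paper calls it the ``even part'' $v_{E}(x)=\tfrac{1}{2}\bigl(v(x)+\mathfrak{R}_{j}(x,v(\sigma_{j}(x)))\bigr)$, which is your $\overline{v}$ up to a factor of $2$), apply the interior weak form, and absorb all curvature-generated corrections into $\widetilde{\mathcal{F}}_{j}$. The only cosmetic difference is that the paper first rewrites \eqref{eq:WeakForm} component-wise in tangent-normal coordinates (equations \eqref{eq:TangentialPDE}--\eqref{eq:NormalPDE}) before recombining, which makes the cancellation of boundary terms and the identification of the lower-order remainder more explicit; this is exactly the bookkeeping you flag as the main obstacle and defer to Appendix~\ref{app:calculations}.
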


\begin{proof}
    Since $u$ satisfies \eqref{eq:WeakForm} then for $v\in{}W_{T}^{1,2}(\Omega;\mathbb{R}^{2})$
    satisfying $\text{supp}(v)\subseteq{}\overline{\mathcal{U}}_{j}$ we can rewrite \eqref{eq:WeakForm}, after expressing both $u$ and $v$ in tangent-normal coordinates, as follows
    \begin{align}
        &\kone\int_{\mathcal{U}_{j}}\!{}
        \biggl[
        \frac{\partial_{y_{1}}\widetilde{u}_{\tau}\bigl(\widetilde{\psi}_{j}^{-1}(x)\bigr)
        \partial_{y_{1}}\widetilde{v}_{\tau}\bigl(\widetilde{\psi}_{j}^{-1}(x)\bigr)}
        {\bigl(1-(\widetilde{\psi}_{j}^{-1})^{2}(x)\kappa\bigl((\widetilde{\psi}_{j}^{-1})^{1}(x)\bigr)\bigr)^{2}}
        +
        \partial_{y_{2}}\widetilde{u}_{\tau}\bigl(\widetilde{\psi}_{j}^{-1}(x)\bigr)
        \partial_{y_{2}}\widetilde{v}_{\tau}\bigl(\widetilde{\psi}_{j}^{-1}(x)\bigr)
        \biggr]
        \nonumber\\
        &+\ktwo\int_{\mathcal{U}_{j}}
        \biggl[
        \frac{
        \partial_{y_{1}}\widetilde{u}_{\tau}\bigl(\widetilde{\psi}_{j}^{-1}(x)\bigr)
        }{1-(\widetilde{\psi}_{j}^{-1})^{2}(x)
        \kappa\bigl((\widetilde{\psi}_{j}^{-1})^{1}(x)\bigr)}
        +\partial_{y_{2}}\widetilde{u}_{\nu}\bigl(\widetilde{\psi}_{j}^{-1}(x)\bigr)
        \biggr]
        \biggl[\frac{
        \partial_{y_{1}}\widetilde{v}_{\tau}\bigl(\widetilde{\psi}_{j}^{-1}(x)\bigr)
        }{1-(\widetilde{\psi}_{j}^{-1})^{2}(x)
        \kappa\bigl((\widetilde{\psi}_{j}^{-1})^{1}(x)\bigr)}
        \biggr]
        \nonumber\\
        =&\int_{\mathcal{U}_{j}}\!{}
        \frac{\widetilde{u}_{\tau}\bigl(\widetilde{\psi}_{j}^{-1}(x)\bigr)
        \widetilde{v}_{\tau}\bigl(\widetilde{\psi}_{j}^{-1}(x)\bigr)}
        {\varepsilon^{2}}(1-|u|^{2})
        +\int_{\mathcal{U}_{j}}\!{}F_{j,\tau}\bigl(x,u(x),\nabla{}u(x)\bigr)
        \widetilde{v}_{\tau}\bigl(\widetilde{\psi}_{j}^{-1}(x)\bigr)
        \label{eq:TangentialPDE}
    \end{align}
    and
    \begin{align}
        &\kone\int_{\mathcal{U}_{j}}\!{}
       \biggl[
        \frac{\partial_{y_{1}}\widetilde{u}_{\nu}\bigl(\psi_{j}^{-1}(x)\bigr)
        \partial_{y_{1}}\widetilde{v}_{\nu}\bigl(\psi_{j}^{-1}(x)\bigr)}
        {\bigl(1-(\psi_{j}^{-1})^{2}(x)\kappa\bigl((\psi_{j}^{-1})^{1}(x)\bigr)\bigr)^{2}}
        +
        \partial_{y_{2}}\widetilde{u}_{\nu}\bigl(\psi_{j}^{-1}(x)\bigr)
        \partial_{y_{2}}\widetilde{v}_{\nu}\bigl(\psi_{j}^{-1}(x)\bigr)
        \biggr]\nonumber\\
        &+\ktwo\int_{\mathcal{U}_{j}}
        \biggl[\frac{
        \partial_{y_{1}}\widetilde{u}_{\tau}\bigl(\psi_{j}^{-1}(x)\bigr)
        }{1-(\widetilde{\psi}_{j}^{-1})^{2}(x)
        \kappa\bigl((\widetilde{\psi}_{j}^{-1})^{1}(x)\bigr)}
        +\partial_{y_{2}}\widetilde{u}_{\nu}\bigl(\psi_{j}^{-1}(x)\bigr)
        \biggr]
        \Bigl[\partial_{y_{2}}\widetilde{v}_{\nu}\bigl(\psi_{j}^{-1}(x)\bigr)
        \Bigr]\nonumber\\
        =&\int_{\mathcal{U}_{j}}\!{}
        \frac{\widetilde{u}_{\nu}\bigl(\psi_{j}^{-1}(x)\bigr)
        \widetilde{v}_{\nu}\bigl(\psi_{j}^{-1}(x)\bigr)}
        {\varepsilon^{2}}(1-|u|^{2})
        +\int_{\mathcal{U}_{j}}\!{}F_{j,\nu}\bigl(x,u(x),\nabla{}u(x)\bigr)
        \widetilde{v}_{\nu}\bigl(\psi_{j}^{-1}(x)\bigr)
        \, ,
        \label{eq:NormalPDE}
    \end{align}
    where $F_{j,\tau}$ and $F_{j,\nu}$ are determined by the remaining integrands found in the detailed calculations presented in Appendix \ref{app:calculations}.
    From there it becomes clear that $F_{j,\tau}$ and $F_{j,\nu}$ satisfy the grwoth estimate
    \begin{equation}\label{eq:carathmap}
        \max\{|F_{j,\tau}(x,z,p)|,|F_{j,\nu}(x,z,p)|\}
        \le{}C(\Omega,\ktwo)\bigl[1+|z|+|p|\bigr]
    \end{equation}
    for each $j=1,2\ldots,N$.
    In addition, if $\Omega$ has $C^{3,1}-$boundary then we also have
    \begin{equation}\label{eq:carathmapgrad}
        \max\{|\nabla_{z,p}F_{j,\tau}(x,z,p)|,|\nabla_{z,p}F_{j,\nu}(x,z,p)|\}
        \le{}C(\Omega,\ktwo)
    \end{equation}
    for each $j=1,2,\ldots,N$.\\

    Next, we use \eqref{eq:TangentialPDE} and \eqref{eq:NormalPDE}
    in order to show that the extension $U$
    satisfies \eqref{def:ExtendedPDE} on $\widetilde{\mathcal{U}}_{j}$ for $j=0,1,2\ldots,N$.
    Since $U=u$ on $\widetilde{\mathcal{U}}_{0}$ then we may restrict attention to
    the case of $j=1,2,\ldots,N$ for $x\in\widetilde{\mathcal{U}}_{j}$ for $j=1,2,\ldots,N$.
    Through direct computation using the definition of $U$ in
    \eqref{def:uExtension} and the PDEs satisfied by its components \eqref{eq:TangentialPDE}, \eqref{eq:NormalPDE},
    as well as the Change of Variables Theorem we obtain
    \begin{align*}
        &
        \sum_{i=1}^{2}\kone\int_{\widetilde{\mathcal{U}}_{j}}\!{}
        \bigl<\nabla{}U^{i}(x),\nabla{}v^{i}(x)\bigr>_{j}
        +\ktwo\int_{\widetilde{\mathcal{U}}_{j}}\!{}
        \text{div}_{j}(U)(x)\text{div}_{j}(v)(x)
        \\
        =&\kone\int_{\widetilde{\mathcal{U}}_{j}\cap\Omega}\!{}
        \nabla\widetilde{u}_{\tau}\bigl(\widetilde{\psi}_{j}^{-1}(x)\bigr)^{T}
        \bigl[\mathcal{G}_{j}(x,\widetilde{\psi}_{j}^{-1}(x))\bigr]
        \bigl[\nabla\widetilde{v}_{\tau}\bigl(\widetilde{\psi}_{j}^{-1}(x)\bigr)+
        \nabla\widetilde{v}_{\tau}^{R}\bigl(\widetilde{\psi}_{j}^{-1}(x)\bigr)\bigr]\\
        &+\kone
        \int_{\widetilde{\mathcal{U}}_{j}\cap\Omega}\!{}
        \nabla\widetilde{u}_{\nu}\bigl(\widetilde{\psi}_{j}^{-1}(x)\bigr)^{T}
        \bigl[\mathcal{G}_{j}(x,\widetilde{\psi}_{j}^{-1}(x))\bigr]
        \bigl[\nabla\widetilde{v}_{\nu}\bigl(\widetilde{\psi}_{j}^{-1}(x)\bigr)-
        \nabla\widetilde{v}_{\nu}^{R}\bigl(\widetilde{\psi}_{j}^{-1}(x)\bigr)\bigr]\\
        &+\ktwo\int_{\widetilde{\mathcal{U}}_{j}\cap\Omega}\!{}
        \biggl[
        \frac{\partial_{y_{1}}\widetilde{u}_{\tau}\bigl(\widetilde{\psi}_{j}^{-1}(x)\bigr)}
        {1-(\widetilde{\psi}_{j}^{-1})^{2}(x)\kappa\bigl((\widetilde{\psi}_{j}^{-1})^{1}(x)\bigr)}
        +\partial_{y_{2}}\widetilde{u}_{\nu}\bigl(\widetilde{\psi}_{j}^{-1}(x)\bigr)
        \biggr]
        \frac{\partial_{y_{1}}\widetilde{v}_{\tau}\bigl(\widetilde{\psi}_{j}^{-1}(x)\bigr)+\partial_{y_{1}}(\widetilde{v}^{R})_{\tau}\bigl(\widetilde{\psi}_{j}^{-1}(x)\bigr)}
        {1-(\widetilde{\psi}_{j}^{-1})^{2}(x)\kappa\bigl((\widetilde{\psi}_{j}^{-1})^{1}(x)\bigr)}
        \\
        &+\ktwo\int_{\widetilde{\mathcal{U}}_{j}\cap\Omega}\!{}
        \biggl[
        \frac{\partial_{y_{1}}\widetilde{u}_{\tau}\bigl(\widetilde{\psi}_{j}^{-1}(x)\bigr)}
        {1-(\widetilde{\psi}_{j}^{-1})^{2}(x)\kappa\bigl((\widetilde{\psi}_{j}^{-1})^{1}(x)\bigr)}
        +\partial_{y_{2}}\widetilde{u}_{\nu}\bigl(\widetilde{\psi}_{j}^{-1}(x)\bigr)
        \biggr]
        \bigl[\partial_{y_{2}}\widetilde{v}_{\nu}\bigl(\widetilde{\psi}_{j}^{-1}(x)\bigr)-\partial_{y_{2}}(\widetilde{v}^{R})_{\nu}\bigl(\widetilde{\psi}_{j}^{-1}(x)\bigr)
        \bigr]\\
        &+\int_{\widetilde{\mathcal{U}}_{j}\setminus\Omega}\!{}
        \widetilde{F}_{j}\bigl(x,U(x),\nabla{}U(x)\bigr)
        v(x).
    \end{align*}
    where $v^{R}(y)\coloneqq{}v(Ry)$ and where $\widetilde{F}_{j}$
    satisfies \eqref{eq:carathmap} and, if $\Omega$ has $C^{3,1}-$boundary,
    \eqref{eq:carathmapgrad}.
    Again we refer to Appendix \ref{app:calculations} for the details.
    We introduce the even part of the function $v$
    \begin{equation*}
        v_{E}(x)\coloneqq{}\frac{v(x)+\mathfrak{R}_{j}\bigl(x,v(\sigma_{j}(x))\bigr)}{2}.
    \end{equation*}
    This can be expressed as
    \begin{equation*}
        v_{E}(x)=
        \frac{\widetilde{v}_{\tau}\bigl(\widetilde{\psi}_{j}^{-1}(x)\bigr)
        +\widetilde{v}_{\tau}\bigl(R\widetilde{\psi}_{j}^{-1}(x)\bigr)}{2}
        \tau\bigl((\widetilde{\psi}_{j}^{-1})^{1}(x)\bigr)
        +\frac{\widetilde{v}_{\nu}\bigl(\widetilde{\psi}_{j}^{-1}(x)\bigr)
        -\widetilde{v}_{\nu}\bigl(R\widetilde{\psi}_{j}^{-1}(x)\bigr)}{2}
        \nu\bigl((\widetilde{\psi}_{j}^{-1})^{1}(x)\bigr)
    \end{equation*}
    and hence for $x\in\partial\Omega$ we have
    \begin{equation*}
        (v_{E})_{\nu}(x)=
        \frac{\widetilde{v}_{\nu}\bigl(\widetilde{\psi}_{j}^{-1}(x)\bigr)
        -\widetilde{v}_{\nu}\bigl(R\widetilde{\psi}_{j}^{-1}(x)\bigr)}{2}
        =\frac{\widetilde{v}_{\nu}(y_{1},0)-\widetilde{v}_{\nu}(y_{1},0)}{2}=0.
    \end{equation*}
    Because of the above, we can use
    \eqref{eq:TangentialPDE}, \eqref{eq:NormalPDE}, and that $U=u$ on
    $\Omega$ to obtain
    \begin{equation*}
        \sum_{i=1}^{2}\kone\int_{\widetilde{\mathcal{U}}_{j}}\!{}
        \bigl<\nabla{}U^{i},\nabla{}v^{i}\bigr>_{j}
        +\ktwo\int_{\widetilde{\mathcal{U}}_{j}}\!{}
        \text{div}_{j}(U)\text{div}_{j}(v)
        =2\int_{\widetilde{\mathcal{U}}_{j}\cap\Omega}\!{}
        \frac{U\cdot{}v_{E}}{\varepsilon^{2}}(1-|U|^{2})
        +\int_{\widetilde{\mathcal{U}}_{j}}\!{}
        \widetilde{\mathcal{F}}_{j}\bigl(x,U(x),\nabla{}U(x)\bigr)\cdot
        v(x),
    \end{equation*}
    where $\widetilde{\mathcal{F}}_j$ depends on $\widetilde{F}_j, F_{j,\nu}$ and $F_{j,\tau}$.
    Noting that
    \begin{equation*}
        U(x)\cdot\bigl[\mathfrak{R}_{j}\bigl(x,v(\sigma_{j}(x))\bigr)\bigr]=\mathfrak{R}_{j}(x,U(x))\cdot{}v(\sigma_{j}(x))
        \, ,
    \end{equation*}
    and changing variables gives \eqref{def:ExtendedPDE}.
    % have the desired identity.
    % Integrating by parts and combining this with our previous calculations we now
    % have \eqref{def:ExtendedPDE}.
\end{proof}

Next, we show that the system of PDEs solved by the tangential and normal components of the coordinate representations of $U$ satisfies the Legendre-Hadamard ellipticicty condition.
This is sufficient since representing $u$ and $v$ in a new basis results, to highest order, in
\begin{equation*}
    \eta^{i}=\sum_{m=1,2}C_{m}^{i}\widetilde{\eta}^{m},\hspace{20pt}
    \eta^{j}=\sum_{n=1,2}C_{n}^{j}\widetilde{\eta}^{n}
\end{equation*}
which gives
\begin{align*}
    \sum_{\alpha,\beta,i,j=1,2}A_{i,j}^{\alpha,\beta}\xi_{\alpha}\xi_{\beta}\eta^{i}\eta^{j}
    &=\sum_{\alpha,\beta,i,j=1,2}\sum_{m=1,2}\sum_{n=1,2}
    A_{i,j}^{\alpha,\beta}C_{m}^{i}C_{n}^{j}\xi_{\alpha}\xi_{\beta}
    \widetilde{\eta}^{m}\widetilde{\eta}^{n}\\
    &=\sum_{\alpha,\beta,m,n=1,2}
    \Bigl[\sum_{i,j=1,2}A_{i,j}^{\alpha,\beta}C_{m}^{i}C_{n}^{j}\Bigr]
    \xi_{\alpha}\xi_{\beta}\widetilde{\eta}^{m}\widetilde{\eta}^{n}
\end{align*}
and hence one representation will be non-negative if and only if the other is.
\begin{lemma}\label{lem:ExtensionEllipticity}
    Suppose $u\in{}W_{T}^{1,2}(\Omega;\mathbb{R}^{2})$ solves
    \eqref{eq:WeakForm} and suppose that $U$ is the extension of $u$ defined
    as in \eqref{def:uExtension}.
    Then $U$ weakly solves an elliptic PDE.
\end{lemma}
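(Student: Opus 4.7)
The plan is to extract the principal symbol from the weak equation provided by Lemma~\ref{lem:PDEGluing} and to verify the Legendre-Hadamard inequality by direct computation. Since $\widetilde{\mathcal{F}}_{j}$ and the remaining right-hand side terms in \eqref{def:ExtendedPDE} involve only $U$ itself (and, via the growth bound, at most its first derivatives), they contribute nothing to the principal symbol, and I would focus exclusively on the bilinear form
\[
B_{j}(\nabla U,\nabla v)\;=\;\kone\sum_{i=1}^{2}\bigl\langle\nabla U^{i},\nabla v^{i}\bigr\rangle_{j}+\ktwo\,\mathrm{div}_{j}(U)\,\mathrm{div}_{j}(v).
\]
On $\mathcal{U}_{0}$ the inner product is the Euclidean one and $\mathrm{div}_{j}$ is the ordinary divergence, so ellipticity reduces to the classical fact that $|\nabla u|^{2}+\ktwo(\mathrm{div} u)^{2}$ satisfies Legendre-Hadamard; the work is therefore concentrated on the charts $\widetilde{\mathcal{U}}_{j}$ for $j\geq1$.

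On those charts I would invoke the basis-invariance observation made immediately before the lemma and represent $U$ and $v$ in the orthonormal tangent-normal frame, writing $U=U_{\tau}\tau+U_{\nu}\nu$ and similarly for $v$. Contributions generated by differentiating the frame $\{\tau(y_{1}),\nu(y_{1})\}$ are of first order in $(U_{\tau},U_{\nu})$ and can be absorbed into $\widetilde{\mathcal{F}}_{j}$ exactly as in the proof of Lemma~\ref{lem:PDEGluing}. What survives as the genuine principal part, using the definitions of $\langle\cdot,\cdot\rangle_{j}$ and $\mathrm{div}_{j}$, is, up to the positive bounded factor $|\det(\nabla\sigma_{j})|$,
\[
\kone\,\mathcal{G}_{j}^{\alpha\beta}\bigl(\partial_{\alpha}U_{\tau}\,\partial_{\beta}v_{\tau}+\partial_{\alpha}U_{\nu}\,\partial_{\beta}v_{\nu}\bigr)+\ktwo\bigl(\mathcal{D}\,\partial_{y_{1}}U_{\tau}+\partial_{y_{2}}U_{\nu}\bigr)\bigl(\mathcal{D}\,\partial_{y_{1}}v_{\tau}+\partial_{y_{2}}v_{\nu}\bigr).
\]

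Testing this symbol on a rank-one gradient $\partial_{\alpha}U^{m}=\xi_{\alpha}\eta^{m}$ with $m\in\{\tau,\nu\}$ gives
\[
\kone\,\bigl(\mathcal{G}_{j}^{\alpha\beta}\xi_{\alpha}\xi_{\beta}\bigr)|\eta|^{2}+\ktwo\bigl(\mathcal{D}\,\xi_{1}\eta^{\tau}+\xi_{2}\eta^{\nu}\bigr)^{2}.
\]
The parameter $r_{1}$ has been chosen so that $|y_{2}\kappa(y_{1})|$ stays strictly below $1$ throughout $\widetilde{\mathcal{U}}_{j}$; consequently $\mathcal{G}_{j}$ is diagonal with both entries bounded above and below by positive constants depending only on $\Omega$, $\mathcal{D}$ is bounded, and $|\det(\nabla\sigma_{j})|$ is bounded away from zero. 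Combined with $\kone>0$ and with the fact that the $\ktwo$-term is a nonnegative rank-one perturbation, this yields a uniform lower bound $\lambda|\xi|^{2}|\eta|^{2}$ for some $\lambda=\lambda(\Omega,\ktwo)>0$, which is precisely the Legendre-Hadamard condition.

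The only substantive obstacle I anticipate is bookkeeping rather than anything analytically deep: one must verify that every term arising from the chain rule through $\widetilde{\psi}_{j}^{-1}$ and from differentiating the moving frame $\{\tau,\nu\}$ is genuinely of lower order, and therefore irrelevant for ellipticity. These are exactly the quantities already collected into $F_{j,\tau}$, $F_{j,\nu}$ and $\widetilde{F}_{j}$ in the proof of Lemma~\ref{lem:PDEGluing}, where the growth estimate \eqref{eq:carathmap} confirms their harmlessness, so no additional bound should be required beyond what is already available in Section~\ref{sec:PDEGluing}.
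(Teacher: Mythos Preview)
Your proposal is correct and follows essentially the same route as the paper: both arguments pass to the tangent-normal frame (invoking the basis-invariance remark preceding the lemma), isolate the principal part of the bilinear form on $\widetilde{\mathcal{U}}_{j}$, and verify Legendre-Hadamard directly. Your observation that the $\ktwo$-contribution is a perfect square, and hence nonnegative, is a minor streamlining of the paper's version, which instead lists the six coefficients $A_{i,j}^{\alpha,\beta}$ explicitly and absorbs the cross term $\frac{2\ktwo}{1+y_{2}\kappa}\xi_{1}\xi_{2}\eta^{1}\eta^{2}$ via the arithmetic-geometric inequality; either way one is left with $\kone\min\bigl\{(1+y_{2}\kappa)^{-2},1\bigr\}|\xi|^{2}|\eta|^{2}$ as the lower bound. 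One small point of care: in your displayed principal part the coefficient in front of $\partial_{y_{1}}U_{\tau}$ should be $\frac{1}{1+y_{2}\kappa}$ rather than $\mathcal{D}$ itself (the distortion factor $\mathcal{D}$ combines with the chain-rule factor from $\partial_{\tau}=\frac{1}{1-y_{2}\kappa}\partial_{y_{1}}$), but since that term enters only as a square this does not affect your conclusion.
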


\begin{proof}
    Since we have already demonstrated that $U$ solved a PDE in Lemma
    \ref{lem:PDEGluing} then we focus on demonstrating that this PDE is
    elliptic.
    In addition, we focus on $x\in\widetilde{\mathcal{U}}_{j}\setminus\Omega$
    for $j=1,2,\ldots,N$ since ellipticty for $x\in\Omega$ is established through a
    similar proof.
    In order to demonstrate ellipticity we first rewrite the left-hand side of
    \eqref{def:ExtendedPDE} in terms of tangential and normal coordinates as
    well as derivatives in the $y$ variables.
    Doing this we obtain
    \begin{align*}
        \sum_{i=1}^{2}\kone\bigl<\nabla{}U^{i}(x),\nabla{}v^{i}(x)\bigr>_{j}&+
        \ktwo\text{div}_{j}(U)\text{div}_{j}(v) \\
        =&|\det(\nabla\sigma_{j}(x))|\biggl[\frac{(1+\ktwo)\partial_{y_{1}}
        \widetilde{U}_{\tau}(\widetilde{\psi}_{j}^{-1}(x))
        \partial_{y_{1}}\widetilde{v}_{\tau}(\widetilde{\psi}_{j}^{-1}(x))}
        {(1+(\widetilde{\psi}_{j}^{-1})^{2}\kappa((\widetilde{\psi}_{j}^{-1})^{1}(x)))^{2}}\\
        &+\frac{\ktwo\partial_{y_{1}}
        \widetilde{U}_{\tau}(\widetilde{\psi}_{j}^{-1}(x))
        \partial_{y_{2}}\widetilde{v}_{\nu}(\widetilde{\psi}_{j}^{-1}(x))}
        {1+(\widetilde{\psi}_{j}^{-1})^{2}\kappa((\widetilde{\psi}_{j}^{-1})^{1}(x))}
        +\kone\partial_{y_{2}}\widetilde{U}_{\tau}(\widetilde{\psi}_{j}^{-1}(x))
        \partial_{y_{2}}\widetilde{v}_{\tau}(\widetilde{\psi}_{j}^{-1}(x))\\
        &+\frac{\kone\partial_{y_{1}}
        \widetilde{U}_{\nu}(\widetilde{\psi}_{j}^{-1}(x))
        \partial_{y_{1}}\widetilde{v}_{\nu}(\widetilde{\psi}_{j}^{-1}(x))}
        {(1+(\widetilde{\psi}_{j}^{-1})^{2}(x)\kappa((\widetilde{\psi}_{j}^{-1})^{1}(x)))^{2}}
        +\frac{\ktwo\partial_{y_{2}}
        \widetilde{U}_{\nu}(\widetilde{\psi}_{j}^{-1}(x))
        \partial_{y_{1}}\widetilde{v}_{\tau}(\widetilde{\psi}_{j}^{-1}(x))}
        {1+(\widetilde{\psi}_{j}^{-1})^{2}\kappa((\widetilde{\psi}_{j}^{-1})^{1}(x))}\\
        &+(1+\ktwo)\partial_{y_{2}}\widetilde{U}_{\nu}(\widetilde{\psi}_{j}^{-1}(x))
        \partial_{y_{2}}\widetilde{v}_{\nu}(\widetilde{\psi}_{j}^{-1}(x))\biggr]
        +\text{lower order terms}.
    \end{align*}
    Dividing by $|\det(\nabla\sigma_{j}(x))|$ and
    integrating over $\widetilde{\mathcal{U}}_{j}$ and changing coordinates gives a
    leading order of
    \begin{align*}
        \int_{\widetilde{\mathcal{U}}_{j}}\!{}
        \frac{1}{|\det(\nabla\sigma_{j}(x))|}
        \bigl[\kone&\bigl<\nabla{}U^{i}(x),\nabla{}v^{i}(x)\bigr>_{j}+
        \ktwo\text{div}_{j}(U)\text{div}_{j}(v)\bigr] \\
        =&\int_{B_{r_{1}}(0)}\!{}\frac{(1+\ktwo)\partial_{y_{1}}
        \widetilde{U}_{\tau}(y)
        \partial_{y_{1}}\widetilde{v}_{\tau}(y)}
        {(1+y_{2}\kappa(y_{1}))^{2}}
        +\int_{B_{r_{1}}(0)}\!{}\frac{\ktwo\partial_{y_{1}}
        \widetilde{U}_{\tau}(y)
        \partial_{y_{2}}\widetilde{v}_{\nu}(y)}
        {1+y_{2}\kappa(y_{1})}\\
        &+\int_{B_{r_{1}}(0)}\!{}\kone\partial_{y_{2}}\widetilde{U}_{\tau}(y)
        \partial_{y_{2}}\widetilde{v}_{\tau}(y)
        +\int_{B_{r_{1}}(0)}\!{}\frac{\kone\partial_{y_{1}}
        \widetilde{U}_{\nu}(y)
        \partial_{y_{1}}\widetilde{v}_{\nu}(y)}
        {(1+y_{2}\kappa(y_{1}))^{2}}\\
        &+\int_{B_{r_{1}}(0)}\!{}\frac{\ktwo\partial_{y_{2}}
        \widetilde{U}_{\nu}(y)
        \partial_{y_{1}}\widetilde{v}_{\tau}(y)}
        {1+y_{2}\kappa(y_{1})}
        +\int_{B_{r_{1}}(0)}\!{}(1+\ktwo)\partial_{y_{2}}\widetilde{U}_{\nu}(y)
        \partial_{y_{2}}\widetilde{v}_{\nu}(y)\\
        &+\text{lower order terms}.
    \end{align*}
    Note that we omit the lower order terms since ellipticity is determined by the highest order.
    We see that the tangential and normal coordinate components of $U$ solve a PDE system
    for which
    \begin{align*}
        A_{1,1}^{1,1}&=\frac{1+\ktwo}{(1+y_{2}\kappa(y_{1}))^{2}},\hspace{5pt}
        A_{2,2}^{1,1}=\frac{1}{(1+y_{2}\kappa(y_{1}))^{2}},\hspace{5pt}
        A_{2,1}^{1,2}=\frac{\ktwo}{1+y_{2}\kappa(y_{1})},\\
        A_{1,2}^{2,1}&=\frac{\ktwo}{1+y_{2}\kappa(y_{1})},\hspace{12pt}
        A_{1,1}^{2,2}=1,\hspace{65pt}
        A_{2,2}^{2,2}=1+\ktwo
    \end{align*}
    and all other coefficients are zero.
    We refer to Definition 3.36 in Subsection 3.41 of \cite{GiMa} for notation regarding PDE systems.
    Notice that for $\xi,\eta\in\mathbb{R}^{2}$ we have
    \begin{align*}
        \sum_{\alpha,\beta,i,j=1,2}A_{i,j}^{\alpha,\beta}
        \xi_{\alpha}\xi_{\beta}\eta^{i}\eta^{j}
        &=\frac{1+\ktwo}
        {(1+y_{2}\kappa(y_{1}))^{2}}\xi_{1}\xi_{1}\eta^{1}\eta^{1}
        +\frac{1}{(1+y_{2}\kappa(y_{1}))^{2}}\xi_{1}\xi_{1}\eta^{2}\eta^{2}\\
        &+\frac{2\ktwo}{1+y_{2}\kappa(y_{1})}\xi_{1}\xi_{2}\eta^{2}\eta^{1}
        +\kone\xi_{2}\xi_{2}\eta^{1}\eta^{1}
        +(1+\ktwo)\xi_{2}\xi_{2}\eta^{2}\eta^{2}.
    \end{align*}
    By the Arithmetic-Geometric inequality we have
    \begin{align*}
        \frac{2\ktwo}{1+y_{2}\kappa(y_{1})}
        \xi_{1}\xi_{2}\eta^{2}\eta^{1}
        &=2\biggl[\frac{\sqrt{\ktwo}}{1+y_{2}\kappa(y_{1})}\xi_{1}\eta^{1}\biggr]\bigl[\sqrt{\ktwo}\xi_{2}\eta^{2}\bigr]\\
        &\le\frac{\ktwo}
        {(1+y_{2}\kappa(y_{1}))^{2}}\xi_{1}\xi_{1}\eta^{1}\eta^{1}
        +\ktwo\xi_{2}\xi_{2}\eta^{2}\eta^{2}
    \end{align*}
    and hence we obtain
    \begin{equation*}
        \sum_{\alpha,\beta,i,j=1,2}A_{i,j}^{\alpha,\beta}\xi_{\alpha}\xi_{\beta}\eta^{i}\eta^{j}
        \ge\frac{\kone\xi_{1}^{2}|\eta|^{2}}{(1+y_{2}\kappa(y_{1}))^{2}}
        +\kone\xi_{2}^{2}|\eta|^{2}
        \ge\kone\min\biggl\{\frac{1}{(1+y_{2}\kappa(y_{1}))^{2}},1\biggr\}|\xi|^{2}|\eta|^{2}
        \, ,
    \end{equation*}
    which demonstrates Legendre-Hadamard ellipticity.
    % Note that Legendre-Hadamard ellipticity is typically defined in terms of the standard basis
    % $\mathbf{e}_{i}$ so we technically have shown that the function
    % $y\mapsto(U_{\tau}(y), U_{\nu}(y))$ satisfies Legendre-Hadamard ellipticity.
    % This is sufficient since we only need to prove smoothness of the components.
\end{proof}

\section{Uniform \texorpdfstring{$L^{4}$}{} Estimate}
\label{sec:Regularity}

In this section we establish preliminary $L^{4}$ bounds, uniform in $\varepsilon$, similar to those found in \cite{CoKePh}.
We use the estimates to establish the desired regularity in Section~\ref{sec:Rescaling}.\\

% \subsection{Uniform \texorpdfstring{$L^{4}$}{} Estimate}
Here we first establish a uniform $L^{1}$ bound on 
$\frac{1}{\varepsilon^{2}}\bigl(|u_{\varepsilon}|^{2}-1\bigr)^{2}$
using results for minimizers of the Ginzburg-Landau energy.
% For this goal, we introduce the (extended) Ginzburg-Landau functional defined, for
% $\alpha>0$, by
% \begin{equation*}
%     GL_{\varepsilon,\alpha}(u,\Omega)\coloneqq
%     \int_{\Omega}\!{}\biggl\{\frac{\alpha}{2}|\nabla{}u|^{2}
%     +\frac{1}{4\varepsilon^{2}}\bigl(|u|^{2}-1\bigr)^{2}\biggr\}.
% \end{equation*}
% We note that when $\alpha=1$ we obtain the standard Ginzburg-Landau energy.
We will also need the following lemma concerning subsequences.
\begin{lemma}\label{lem:SubseqBound}
    Suppose $A\in\mathbb{R}$, $a>0$, and $f\colon(0,a]\to\mathbb{R}$
    is such that for any sequence $\{x_{n}\}_{n\in\mathbb{N}}\subseteq(0,a]$
    where $\lim\limits_{n\to\infty}x_{n}=0$ there
    is a subsequence $\{x_{n_{m}}\}_{m\in\mathbb{N}}$ such that
    \begin{equation*}
        f(x_{n_{m}})\ge{}A.
    \end{equation*}
    Then there is $\delta>0$ such that if $x\in(0,\delta)$ then
    \begin{equation*}
        f(x)\ge{}A.
    \end{equation*}
\end{lemma}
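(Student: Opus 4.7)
The plan is to argue by contraposition. Suppose the conclusion fails: for every $\delta>0$ there exists some $x\in(0,\delta)$ (which we may assume lies in $(0,a]$ by taking $\delta\le a$) with $f(x)<A$. The goal is to extract from this a sequence $\{x_n\}\subseteq(0,a]$ with $x_n\to 0$ such that $f(x_n)<A$ for every $n$, because this sequence directly contradicts the hypothesis: every subsequence $\{x_{n_m}\}$ would also satisfy $f(x_{n_m})<A$, so no subsequence with $f(x_{n_m})\ge A$ can exist.

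To construct such a sequence I would apply the negated conclusion at the scale $\delta_n \coloneqq \min(a,1/n)$: for each $n\in\mathbb{N}$ pick $x_n\in(0,\delta_n)$ with $f(x_n)<A$. Then $x_n\in(0,a]$ and $0<x_n<1/n$, so $x_n\to 0$. This is the whole construction; the only subtlety is to make sure the chosen $x_n$ stay inside the domain $(0,a]$ of $f$, which is handled by capping $\delta_n$ at $a$.

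With this sequence in hand, the hypothesis would supply a subsequence $\{x_{n_m}\}$ with $f(x_{n_m})\ge A$, directly contradicting $f(x_n)<A$ for all $n$. Hence the conclusion must hold, i.e.\ some $\delta>0$ exists with $f(x)\ge A$ on $(0,\delta)$.

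There is essentially no obstacle here: the statement is a soft topological fact about real-valued functions on $(0,a]$, and the proof is a one-line contrapositive extraction of a sequence witnessing the failure. The only thing to be slightly careful about is the domain constraint $x_n\in(0,a]$, which is why I truncate $1/n$ by $a$ in the construction.
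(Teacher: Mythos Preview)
Your proof is correct and follows essentially the same contrapositive argument as the paper: assume no such $\delta$ exists, extract a sequence $x_n\to 0$ in $(0,a]$ with $f(x_n)<A$, and derive a contradiction with the subsequence hypothesis. Your version is slightly more explicit in constructing $\delta_n=\min(a,1/n)$ and in noting the domain constraint, but the idea is identical.
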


\begin{proof}
    Suppose that no such $\delta>0$ exists.
    Then there is a sequence $\{x_{n}\}_{n\in\mathbb{N}}\subseteq(0,a]$
    tending to zero so that $f(x_{n})<A$.
    By passing to a further subsequence, $\{x_{n_{m}}\}_{m\in\mathbb{N}}$,
    we find that $A\le{}f(x_{n_{m}})<A$ which is a contradiction.
\end{proof}

Now we show that we may obtain uniform $O(1)$ bounds similar to Corollary $2.1$
of \cite{CoKePh}.
We do this by following the same procedure as in Proposition $2.1$ of
\cite{CoKePh}.
% We note that when $\partial\Omega$ is of $C^{4,1}$ regularity we could establish
% the lower bounds for all functions in $W_{T}^{1,2}(\Omega;\mathbb{R}^{2})$ by
% combining \eqref{eq:DivEnergyUpperBound} with Lemma $5.3$, equation $(5.4)$,
% and Proposition $4.1$ of \cite{ABB}.
% combining Theorem $0.1$ item $2$
% of \cite{ABC} with Theorem $5.3$ item $(ii)$ of \cite{AlPo} and arguing
% similarly.
% However, this is not necessary for our desired result.
\begin{lemma}\label{lem:Order1Potential}
    Suppose $u_{\varepsilon}\in{}W_{T}^{1,2}(\Omega;\mathbb{R}^{2})$
    is a minimizer of \eqref{def:Energy} among functions in
    $W_{T}^{1,2}(\Omega;\mathbb{R}^{2})$.
    Then there exists a constant $C(\Omega,\ktwo)$ and
    $\varepsilon_{0}>0$, dependent only on $\Omega$ and $\ktwo$,
    such that for all $\varepsilon\in(0,\varepsilon_{0})$
    \begin{equation*}
        \int_{\Omega}\!{}
        \biggl\{\frac{\ktwo}{2}\bigl(\emph{div}(u_{\varepsilon})\bigr)^{2}
        +\frac{1}{8\varepsilon^{2}}\bigl(|u_{\varepsilon}|^{2}-1\bigr)^{2}\biggr\}
        \le{}C(\Omega,\ktwo).
    \end{equation*}
\end{lemma}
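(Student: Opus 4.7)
The plan is to combine an upper bound from a carefully constructed competitor with a matching logarithmic lower bound on the pure Ginzburg-Landau portion of the energy, using minimality of $u_{\varepsilon}$ to cancel the leading $\pi|\log\varepsilon|$ divergence and isolate the divergence and potential pieces on one side.

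For the upper bound, I would build a test function $w_{\varepsilon}\in W_{T}^{1,2}(\Omega;\mathbb{R}^{2})$ carrying a single degree-$+1$ vortex at a fixed interior point $x_{0}\in\Omega$. The key observation is that the tangential vortex field $x\mapsto (x-x_{0})^{\perp}/|x-x_{0}|$ is of unit modulus and exactly divergence-free on $\Omega\setminus\{x_{0}\}$, and shares winding number with the boundary tangent $\tau$, which on the simply-connected smooth $\Omega$ is the only topological obstruction. Linear truncation inside $B_{\varepsilon}(x_{0})$ together with a smooth homotopy to $\tau$ in a tubular neighborhood of $\partial\Omega$ (possible using the coordinates of Subsection~\ref{subsec:Coordinates}, which are valid because the tangent extends smoothly off $\partial\Omega$) produces a competitor with $w_{\varepsilon}\cdot\nu=0$ on $\partial\Omega$ satisfying
\begin{equation*}
    GL_{\varepsilon,\text{div}}(w_{\varepsilon},\Omega)\leq \pi|\log\varepsilon| + C(\Omega,\ktwo),
\end{equation*}
where the divergence contribution $\int(\text{div}\,w_{\varepsilon})^{2}$ and the potential contribution $\int(|w_{\varepsilon}|^{2}-1)^{2}/\varepsilon^{2}$ are both uniformly bounded (the tangential vortex being divergence-free away from its core, and the core and transition regions contributing $O(1)$ to both terms).

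For the lower bound, I would invoke the sharp vortex-ball logarithmic bound for the Ginzburg-Landau energy under tangential anchoring established in \cite{ABB} (in the spirit of Jerrard and Sandier), which yields
\begin{equation*}
    \frac{1}{2}\int_{\Omega}|\nabla u_{\varepsilon}|^{2} + \frac{1}{8\varepsilon^{2}}\int_{\Omega}(|u_{\varepsilon}|^{2}-1)^{2} \geq \pi|\log\varepsilon| - C(\Omega),
\end{equation*}
for $\varepsilon$ small. The coefficient $1/8$ rather than $1/4$ arises from the standard trick of splitting the potential $(|u|^{2}-1)^{2}/(4\varepsilon^{2})$ in two equal halves and reserving one half for the ball-construction argument; this costs only a worse constant $C(\Omega)$ in the vortex lower bound. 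Subtracting this inequality from the upper estimate $GL_{\varepsilon,\text{div}}(u_{\varepsilon})\le GL_{\varepsilon,\text{div}}(w_{\varepsilon})$ then cancels the divergent $\pi|\log\varepsilon|$ terms and leaves exactly the claimed estimate.

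The main obstacle is ensuring the lower bound holds uniformly in $\varepsilon$ rather than merely along subsequences, since the constants in vortex-ball arguments can depend on the a priori $\varepsilon$-dependent configuration of vortices. This is precisely the role of Lemma~\ref{lem:SubseqBound}: from any sequence $\varepsilon_{n}\to 0$ along which the claimed bound fails, the energy upper bound and compactness of vortex locations produce a subsequence along which the sharp lower bound nonetheless holds, yielding a contradiction with the upper bound from minimality. The lemma then converts this subsequential control into the uniform existence of $\varepsilon_{0}>0$ such that the estimate holds for all $\varepsilon\in(0,\varepsilon_{0})$. A secondary technical point is verifying $W_{T}^{1,2}$ regularity of the competitor across its transition layer from tangential vortex to boundary tangent, which is routine given the tubular neighborhood parameterization already constructed in Subsection~\ref{subsec:Coordinates}.
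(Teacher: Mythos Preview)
Your proposal is correct and follows essentially the same approach as the paper: a divergence-free tangential-vortex competitor gives the upper bound $GL_{\varepsilon,\text{div}}(u_\varepsilon)\le \pi|\log\varepsilon|+C$, the lower bound from \cite{ABB} (upgraded from subsequential to uniform via Lemma~\ref{lem:SubseqBound}) gives $\frac{1}{2}\int|\nabla u_\varepsilon|^2+\frac{1}{8\varepsilon^2}\int(|u_\varepsilon|^2-1)^2\ge \pi|\log\varepsilon|-C$, and subtraction yields the claim. The only cosmetic differences are that the paper builds the transition layer between $\partial B_{r_1}(x_0)$ and $\partial\Omega$ by invoking Theorem~I.3 of \cite{BBH} rather than an explicit tubular homotopy, and it routes the lower bound through the pure Ginzburg--Landau minimizer $w_\varepsilon$ (using $GL_{\varepsilon\sqrt{2}}(u_\varepsilon)\ge GL_{\varepsilon\sqrt{2}}(w_{\varepsilon\sqrt{2}})$) rather than applying the vortex lower bound directly to $u_\varepsilon$.
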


\begin{proof} 
The proof closely follows the approach used in Proposition $2.1$ of
\cite{CoKePh}.
Specifically, it proceeds in two steps:
\begin{enumerate}
    \item
        We obtain an upper bound for \eqref{def:Energy} by direct construction of
        an approximation to the ideal vortex configuration.
        This is done by using a cutoff function as a radial profile as well
        as carefully choosing the vortex to be divergence free.
    \item
        We show that the upper bound can be saturated, to logarithmic order,
        by appealing to lower bound results for the Ginzburg-Landau energy.
\end{enumerate}
    \hspace{2pt}\\
    \noindent\emph{Step 1: Upper Bound.}\hspace{2pt}\\\par{} Suppose $x_{0}\in\Omega$ and choose $r_{1}$ so that
$r_{1}\in\bigl(0,\text{dist}(x_{0},\partial\Omega)\bigr)$.
For $\varepsilon\in(0,r_{1})$ we define
$U_{\varepsilon}(x)\coloneqq\rho_{\varepsilon}(x)\frac{(x-x_{0})^{\perp}}{|x-x_{0}|}$ 
on $\overline{B_{r_{1}}(x_{0})}$
where
% $\theta$ is the argument function about $x_{0}$ and
$\rho_{\varepsilon}(x)\coloneqq
\min\bigl\{\frac{|x-x_{0}|}{\varepsilon},1\bigr\}$.
We also set $U_{\varepsilon}(x)=\tau(x)$ on $\partial\Omega$.
As in the discussion above Theorem $I.3$ from \cite{BBH} or
above equation 2.7 of \cite{CoKePh} we may find a function
$v\in{}W^{1,2}(\Omega\setminus{}B_{r_{1}}(x_{0});\mathbb{S}^{1})$ which
extends the boundary values of $u_{\varepsilon}$ on
$\partial{}B_{r_{1}}(x_{0})$ and $\partial\Omega$ since
$\text{deg}(\partial\Omega)=\text{deg}(\partial{}B_{r_{1}}(x_{0}))$.
We now define $U_{\varepsilon}$ on $\overline{\Omega}$ by
\begin{equation*}
    U_{\varepsilon}(x)=
    \begin{cases}
        \rho_{\varepsilon}(x)\frac{(x-x_{0})^{\perp}}{|x-x_{0}|}&
        \text{on }B_{r_{1}}(x_{0}),\\
        v(x)&\text{on }\Omega\setminus{}B_{r_{1}}(x_{0}),\\
        \tau(x)&\text{on }\partial\Omega.
    \end{cases}
\end{equation*}
We notice that by Theorem $I.3$ of \cite{BBH} we have
\begin{align*}
    \int_{\Omega\setminus{}B_{r_{1}}(x_{0})}\!{}
    \biggl\{\frac{1}{2}|\nabla{}U_{\varepsilon}|^{2}
    +\frac{\ktwo}{2}\bigl(\text{div}(U_{\varepsilon})\bigr)^{2}
    +\frac{1}{4\varepsilon^{2}}\bigl(|U_{\varepsilon}|^{2}-1\bigr)^{2}\biggr\}
    &\le\biggl(\frac{1}{2}+\ktwo\biggr)
    \int_{\Omega\setminus{}B_{r_{1}}(x_{0})}\!{}
    |\nabla{}v|^{2}\\
    &\le\biggl(\frac{1}{2}+\ktwo\biggr)
    \int_{\Omega\setminus{}B_{r_{1}}(x_{0})}\!{}|\nabla\Phi_{2}|^{2}
    +C(\Omega)
\end{align*}
where $\Phi_{2}\colon\Omega\setminus{}B_{r_{1}}(x_{0})\to\mathbb{R}$ satisfies %\dsc{CHECK NOTATION: $\nu$ is normal to $\partial\Omega$ before ??} \dsc{inconsistent also in definition on page 6, sec 2.1 ! also eqn (3). BTW $N$ is the number of charts !}
\begin{equation*}
    \begin{cases}
        -\Delta\Phi_{2}=0&\text{on }\Omega\setminus{}B_{r_{1}}(x_{0}),\\
        \partial_{\nu}\Phi_{2}(x)=\kappa&\text{on }\partial\Omega,\\
        \partial_{\nu}\Phi_{2}(x)=\frac{1}{r_{1}}
        &\text{on }\partial{}B_{r_{1}}(x_{0}).
    \end{cases}
\end{equation*}
Since $\Phi_{2}$ only depends on quantities dependent on $\Omega$ then,
by Theorem $2.4.2.6$ of \cite{Gr}, we conclude that
\begin{equation*}
    \int_{\Omega\setminus{}B_{r_{1}}(x_{0})}\!{}
    \biggl\{\frac{1}{2}|\nabla{}U_{\varepsilon}|^{2}
    +\frac{\ktwo}{2}(\text{div}(U_{\varepsilon}))^{2}
    +\frac{1}{4\varepsilon^{2}}\bigl(|U_{\varepsilon}|^{2}-1\bigr)^{2}\biggr\}
    \le{}C(\Omega,\ktwo).
\end{equation*}
We observe that $\text{div}\bigl(\frac{(x-x_{0})^{\perp}}{|x-x_{0}|}\bigr)=0$
and hence
\begin{equation*}
    \int_{B_{r_{1}}(x_{0})}\!{}
    \biggl\{\frac{1}{2}|\nabla{}U_{\varepsilon}|^{2}
    +\frac{\ktwo}{2}\bigl(\text{div}(U_{\varepsilon})\bigr)^{2}
    +\frac{1}{4\varepsilon^{2}}\bigl(|U_{\varepsilon}|^{2}-1\bigr)^{2}\biggr\}
    \le{}\pi\kone\mathopen{}\left|\log(\varepsilon)\right|\mathclose{}
    +C(\Omega,\ktwo).
\end{equation*}
Altogether we have, since $u_{\varepsilon}$ is a minimizer for
$\mathscr{F}_{\varepsilon}$, that
\begin{equation}\label{eq:DivEnergyUpperBound}
    GL_{\varepsilon,\text{div}}(u_{\varepsilon})
    \le{}GL_{\varepsilon,\text{div}}(U_{\varepsilon})
    \le{}\pi\kone\mathopen{}\left|\log(\varepsilon)\right|\mathclose{}
    +C(\Omega,\ktwo)
\end{equation}
for $\varepsilon\in(0,r_{1})$.\\[5pt]

\noindent\emph{Step 2: Lower Bound.}\hspace{2pt}\\
\par{} We now obtain a lower bound for
\begin{equation*}%\label{eq:LowerInf}
    \inf_{u\in{}W_{T}^{1,2}(\Omega;\mathbb{R}^{2})}
    \Bigl\{GL_{\varepsilon}(u,\Omega)\Bigr\}
\end{equation*}
which matches \eqref{eq:DivEnergyUpperBound} to highest order.
% Notice that if we rescale $\varepsilon$ as follows:
% \begin{equation*}
%     \varepsilon_{\kone}\coloneqq
%     \varepsilon\sqrt{\kone}
%     % ,\hspace{20pt}
%     % \overline{u}_{\varepsilon_{\widetilde{k}}}\coloneqq{}
%     % u_{\frac{\varepsilon_{\widetilde{k}}}{\sqrt{\widetilde{k}}}}
%     % \, ,
% \end{equation*}
% then $\varepsilon_{\kone}$ tends to zero since $\varepsilon$ does and
% for all $w\in{}W_{T}^{1,2}(\Omega;\mathbb{R}^{2})$ we have
% \begin{equation*}
%     GL_{\varepsilon,\kone}(w,\Omega)=
%     \kone\int_{\Omega}\!{}\biggl\{
%     \frac{1}{2}|\nabla{}w|^{2}+\frac{1}{4\varepsilon_{\kone}^{2}}
%     \bigl(|w|^{2}-1\bigr)^{2}\biggr\}
%     =\kone\:
%     GL_{\varepsilon_{\kone},1}(w,\Omega).
% \end{equation*}
For each $\varepsilon\in\bigl(0,r_{1}\bigr]$,
let $w_{\varepsilon}$
be the minimizer of $GL_{\varepsilon}$ (i.e. of the
Ginzburg-Landau energy) over
$W_{T}^{1,2}(\Omega;\mathbb{R}^{2})$.
% Observe that by \eqref{eq:DivEnergyUpperBound} and \eqref{eq:LowerInf} we have that
% for all $\varepsilon_{\kone}\in\bigl(0,\sqrt{\kone}\bigr]$
% \begin{align}\label{eq:SingleVortexEnergy}
%     GL_{\varepsilon_{\kone},1}(w_{\varepsilon_{\kone}},\Omega)
%     =\frac{GL_{\varepsilon,\kone}(w_{\varepsilon_{\kone}},\Omega)}
%     {\kone}
%     \le\frac{GL_{\varepsilon,\text{div}}(u_{\varepsilon},\Omega)}{\kone}
%     &\le\pi\mathopen{}\left|\log(\varepsilon)\right|\mathclose{}
%     +C(\Omega,\kone,\ktwo)\nonumber\\
%     %%
%     &\le\pi\mathopen{}\left|\log(\varepsilon_{\kone})\right|\mathclose{}
%     +C(\Omega,\kone,\ktwo).
% \end{align}
Let $\{\varepsilon_{\nu}\}_{\nu=1}^{\infty}\subseteq(0,r_{1}]$ be
any sequence tending to zero.
By Lemma $5.3$ and equation $(5.4)$ of \cite{ABB} we obtain,
up to passing to a subsequence, $\{\varepsilon_{\nu_{l}}\}_{l=1}^{\infty}$,
that
\begin{align*}
    GL_{\varepsilon_{\nu_{l}}}(w_{\varepsilon_{\nu_{l}}},\Omega)
    \ge\biggl(\pi\sum_{i=1}^{I}|d_{x_{i}}|
    +\frac{\pi}{2}\sum_{j=1}^{J}|d_{y_{j}}|\biggr)
    \mathopen{}\left|\log(\varepsilon_{\nu_{l}})\right|\mathclose{}-C(\Omega)
\end{align*}
for all $l\in\mathbb{N}$ and where $d_{x_{i}},d_{y_{j}}\neq0$ for all
$i=1,2,\ldots,I$ and $j=1,2,\ldots,J$ in the sum.
Note that while \cite{ABB} assumes that $\partial\Omega$ is $C^{4,\alpha}$ for $\alpha>0$
the proof of the lower bound only requires $C^{2,1}-$regularity.
By \eqref{eq:DivEnergyUpperBound} and Proposition $4.1$ of \cite{ABB} we
conclude that, up to passing to a further
subsequence $\{\varepsilon_{\nu_{l}}\}_{l=1}^{\infty}$,
we must either have one interior defect (i.e.\ $I=1$, $J=0$, and $|d_{x_{1}}|=1$) or two boundary defects ($I=0$, $J=2$, and $|d_{y_{j}}|=1$ for $j=1,2$).
In either case, we obtain
\begin{align*}
    GL_{\varepsilon_{\nu_{l}}}(w_{\varepsilon_{\nu_{l}}},\Omega)
    \ge\pi\mathopen{}\left|\log(\varepsilon_{\nu_{l}})\right|\mathclose{}-C(\Omega)
\end{align*}
for all $l\in\mathbb{N}$.
Since the sequence we started with was arbitrary, we conclude
from Lemma \ref{lem:SubseqBound} that
this inequality holds for all $\varepsilon\in(0,r_{2})$
where $r_{2}>0$ is chosen sufficiently small.
Thus,
\begin{equation}\label{eq:DivEnergyLowerBound}
    GL_{\varepsilon}(w_{\varepsilon},\Omega)
    -\kone\pi\mathopen{}\left|\log(\varepsilon)\right|\mathclose{}
    \ge{}C(\Omega)
\end{equation}
% \begin{align*}
%     GL_{\varepsilon_{\kone}}(w_{\varepsilon},\Omega)
%     \ge\pi\mathopen{}\left|\log(\varepsilon)\right|\mathclose{}-C(\Omega)
% \end{align*}
for all $\varepsilon\in(0,r_{2})$.
% Multiplying both sides by $\kone$ now gives that
% \begin{equation}\label{eq:DivEnergyLowerBound}
%     GL_{\varepsilon}(w_{\varepsilon},\Omega)
%     -\kone\pi\mathopen{}\left|\log(\varepsilon)\right|\mathclose{}
%     \ge{}C(\Omega)
% \end{equation}
% for all $\varepsilon\in(0,r_{2})$.
Following the technique from Corollary $2.1$ of \cite{CoKePh}
we combine \eqref{eq:DivEnergyUpperBound} and \eqref{eq:DivEnergyLowerBound}
to find that for all $\varepsilon\in(0,r_{2}\slash2)$
\begin{align*}
    \int_{\Omega}\!{}
    \biggl\{\frac{\ktwo}{2}(\text{div}(u_{\varepsilon}))^{2}
    +\frac{1}{8\varepsilon^{2}}(|u_{\varepsilon}|^{2}-1)^{2}\biggr\}
    &= GL_{\varepsilon,\text{div}}(u_{\varepsilon},\Omega)
    -GL_{\varepsilon\sqrt{2}}(u_{\varepsilon},\Omega)\\
    &\le{}C(\Omega,\ktwo)
    -
    \Bigl[GL_{\varepsilon\sqrt{2}}(u_{\varepsilon\sqrt{2}},\Omega)
    -\pi\kone\mathopen{}\left|\log(\varepsilon\sqrt{2})\right|\mathclose{}\Bigr]\\
    &\le{}C(\Omega,\ktwo)
\end{align*}
holds for $\varepsilon\in(0,r_{2}\slash2)$.
\end{proof}

\section{Rescaling Argument}
\label{sec:Rescaling}

Here we follow the strategy described in Proposition 2.2 of \cite{CoKePh}
and Lemma $3.1$ of \cite{BPP}.
Specifically, we rescale a solution to \eqref{def:PDE} and use the uniform
$L^{4}$ bounds from Section \ref{sec:Regularity} in order to establish 
$L^{\infty}$ and Lipschitz bounds.\\

In preparation for the rescaling argument, we define a few quantities.
We fix $x_{0}\in\overline{\Omega}$ and
$\varepsilon\in (0,\frac{1}{4}\text{inj}(\partial\Omega)]$.
This will be useful for reflection arguments needed later.
We denote the translated and scaled versions of $\Omega$ and $U$ by
\begin{equation}\label{def:rescaledU}
    \widehat{\Omega}_{x_{0},\varepsilon}\coloneqq\bigl\{z\in\mathbb{R}^{2}:x_{0}+\varepsilon{}z\in\widetilde{\Omega}\bigr\}
    =\frac{1}{\varepsilon}[\widetilde{\Omega}-x_0],
    \hspace{20pt}
    \widehat{U}_{x_{0},\varepsilon}(z)\coloneqq{}U(x_{0}+\varepsilon{}z)
\end{equation}
where $\widehat{U}_{x_{0},\varepsilon}$ is defined over
$\widehat{\Omega}_{x_{0},\varepsilon}$.
In addition, for $A\subseteq\mathbb{R}^{2}$ we introduce the notation
\begin{equation*}
    A_{x_{0},\varepsilon}\coloneqq{}
    \bigl\{z\in\mathbb{R}^{2}:x_{0}+\varepsilon{}z\in{}A\bigr\}.
\end{equation*}
We observe that
\begin{equation*}
    \partial\widehat{\Omega}_{x_{0},\varepsilon}=
    \{z\in\mathbb{R}^{2}:x_{0}+\varepsilon{}z\in\partial\widetilde{\Omega}\}.
\end{equation*}
We define
\begin{equation*}
    \widehat{B}_{R,x_{0},\varepsilon}(0)\coloneqq{}
    \widehat{\Omega}_{x_{0},\varepsilon}\cap{}B_{R}(0)
    ={}
    \widetilde{\Omega}\cap{}B_{\varepsilon{}R}(x_{0})
\end{equation*}
for $R>0$ as well as
$\widehat{\mathcal{U}}_{j,x_{0},\varepsilon}\coloneqq\frac{1}{\varepsilon}\bigl[\widetilde{\mathcal{U}}_{j}-x_{0}\bigr]$.
Finally, we define
\begin{equation*}
    \bigl<\nabla\widehat{U}_{x_{0},\varepsilon}^{i},\nabla{}v^{i}
    \bigr>_{j,\varepsilon}\coloneqq
    \biggl\{\nabla\sigma_{j}(\sigma_{j}(x_{0}+\varepsilon{}z))
    \nabla\sigma_{j}(\sigma_{j}(x_{0}+\varepsilon{}z))^{T}
    \nabla\widehat{U}_{x_{0},\varepsilon}^{i}(z)\biggr\}\cdot\nabla{}v^{i}(z)
\end{equation*}
and for appropriate $w$ we set
\begin{align*}
    &\text{div}_{j,\varepsilon}(w)(z)\\
    \coloneqq&
    \frac{1}{|\det(\nabla\sigma_{j}(x_{0}+\varepsilon{}z))|}
    \biggl[
    \widetilde{\mathcal{D}}(z)
    \partial_{\tau}\biggl(|\det(\nabla\sigma_{j}(x_{0}+\varepsilon{}z))|w(z)\cdot
    \tau(\widetilde{\psi}_{j}^{-1}(x_{0}+\varepsilon{}z))\biggr)\\
    &+\partial_{\nu}\biggl(|\det(\nabla\sigma_{j}(x_{0}+\varepsilon{}z))|
    w(z)\cdot\nu(\widetilde{\psi}_{j}^{-1}(x_{0}+\varepsilon{}z))\biggr)
    \biggr]
    \, 
\end{align*}
where
% \dsc{old formula}
%\begin{equation*}
%    \widetilde{\mathcal{D}}(x)\coloneqq
%    \frac{1-(\widetilde{\psi}_{j}^{-1})^{2}(x_{0}+\varepsilon{}z)
%    \kappa((\widetilde{\psi}_{j}^{-1})^{1}(x_{0}+\varepsilon{}z))}
%    {1+(\widetilde{\psi}_{j}^{-1})^{2}(x_{0}+\varepsilon{}z)
%    \kappa((\widetilde{\psi}_{j}^{-1})^{1}(x_{0}+\varepsilon{}z))}
%\end{equation*}
\begin{equation*}
    \widetilde{\mathcal{D}}(z)\coloneqq
    \begin{cases}
    1& \text{for }z\in\frac{1}{\varepsilon}[\widetilde{\Omega}\cap\Omega - x_0],\\
    \frac{1-(\widetilde{\psi}_{j}^{-1})^{2}(x_{0}+\varepsilon{}z)
    \kappa((\widetilde{\psi}_{j}^{-1})^{1}(x_{0}+\varepsilon{}z))}
    {1+(\widetilde{\psi}_{j}^{-1})^{2}(x_{0}+\varepsilon{}z)
    \kappa((\widetilde{\psi}_{j}^{-1})^{1}(x_{0}+\varepsilon{}z))},
    &\text{for }z\in\frac{1}{\varepsilon}[\widetilde{\Omega}\setminus\Omega - x_0],
    \end{cases}
\end{equation*}
We now begin with our first lemma demonstrating that the rescaled solution to
\eqref{eq:WeakForm} solves an elliptic PDE.

\begin{lemma}
    Suppose $u$ satisfies \eqref{eq:WeakForm},
    $U$ is the extension of $u$ defined as in
    \eqref{def:uExtension}, and $\widehat{U}$ is the rescaled form of $U$ defined in \eqref{def:rescaledU}.
    Then for $v\in{}W_{0}^{1,2}(\widehat{\mathcal{U}}_{j,x_{0},\varepsilon};\mathbb{R}^{2})$ it follows that
    \begin{align}\label{eq:RescaledPDE}
        \int_{\widehat{\mathcal{U}}_{j,x_{0},\varepsilon}}\!{}
        \biggl\{
        \kone
        \sum_{i=1}^{2}\bigl<\nabla\widehat{U}_{x_{0},\varepsilon}^{i},
        \nabla{}v^{i}\bigr>_{j,\varepsilon}
        \biggr.+&\biggl.\ktwo\,
        \emph{div}_{j,\varepsilon}(\widehat{U}_{x_{0},\varepsilon})
        \emph{div}_{j,\varepsilon}(v)
        \biggr\}
        =\int_{[\widetilde{\mathcal{U}}_{j}\cap\Omega]_{x_{0},\varepsilon}}\!{}
        \widehat{U}_{x_{0},\varepsilon}(z)\cdot{}v(z)
        (1-|\widehat{U}_{x_{0},\varepsilon}(z)|^{2})\nonumber\\
        &+\int_{[\widetilde{\mathcal{U}}_{j}\setminus\Omega]_{x_{0},\varepsilon}}\!{}
        |\det(\nabla\sigma_{j}(x_{0}+\varepsilon{}z))|
        \sigma_{j}(\widehat{U}_{x_{0},\varepsilon}(z))\cdot{}v(z)
        (1-|\widehat{U}_{x_{0},\varepsilon}(z)|^{2})\nonumber\\
        &+\varepsilon^{2}\int_{\widehat{\mathcal{U}}_{j,x_{0},\varepsilon}}\!{}\widetilde{\mathcal{F}}_{j}
        (x_{0}+\varepsilon{}z,
        \widehat{U}_{x_{0},\varepsilon}(z),
        \varepsilon^{-1}\nabla{}\widehat{U}_{x_{0},\varepsilon}(z))\cdot{}v(z)
        \, ,
    \end{align}
    In addition, for $j=1,2,\ldots,N$, $\widetilde{\mathcal{F}}_{j}(x,z,p)$ satisfies
    \begin{equation}\label{eq:normbound}
        |\widetilde{\mathcal{F}}_{j}(x,z,p)|
        \le{}C(\Omega,\ktwo)\bigl[1+|z|+|p|\bigr]
    \end{equation}
    and the operator, $\mathfrak{L}$, defined by the left-hand side of
    \eqref{eq:RescaledPDE} is elliptic.
\end{lemma}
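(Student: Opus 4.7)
The plan is to derive \eqref{eq:RescaledPDE} by performing a change of variables in the weak formulation \eqref{def:ExtendedPDE} from Lemma \ref{lem:PDEGluing}, and then inherit the structural estimates and ellipticity from Lemmas \ref{lem:PDEGluing} and \ref{lem:ExtensionEllipticity}.

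First, given a test function $v\in W_0^{1,2}(\widehat{\mathcal{U}}_{j,x_0,\varepsilon};\mathbb{R}^2)$, I would define its ``unscaled'' counterpart $\bar v\colon\widetilde{\mathcal{U}}_j\to\mathbb{R}^2$ by $\bar v(x)\coloneqq v\bigl(\tfrac{x-x_0}{\varepsilon}\bigr)$. Since $\widehat{\mathcal{U}}_{j,x_0,\varepsilon}=\tfrac{1}{\varepsilon}[\widetilde{\mathcal{U}}_j-x_0]$ by definition, $\bar v$ belongs to $W_0^{1,2}(\widetilde{\mathcal{U}}_j;\mathbb{R}^2)$ and is therefore an admissible test function in \eqref{def:ExtendedPDE}. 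Substituting $\bar v$ into that identity, I would then apply the change of variables $x=x_0+\varepsilon z$ to all integrals, using $dx=\varepsilon^2\,dz$, $\nabla_x\bar v(x)=\varepsilon^{-1}\nabla_z v(z)$, and $\nabla U(x_0+\varepsilon z)=\varepsilon^{-1}\nabla\widehat U_{x_0,\varepsilon}(z)$.

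The principal terms on the left-hand side involve two gradients, each contributing a factor $\varepsilon^{-1}$, which together with the Jacobian $\varepsilon^2$ yields an overall factor of $1$. Matching the transformed integrand with the definitions above the lemma, the quadratic form $\langle\nabla U^i,\nabla\bar v^i\rangle_j$ becomes exactly $\langle\nabla\widehat U_{x_0,\varepsilon}^i,\nabla v^i\rangle_{j,\varepsilon}$ (the coefficient matrix $\nabla\sigma_j\,\nabla\sigma_j^T$ just gets evaluated at $x_0+\varepsilon z$), and the divergence-like product $\mathrm{div}_j(U)\,\mathrm{div}_j(\bar v)$ produces $\mathrm{div}_{j,\varepsilon}(\widehat U_{x_0,\varepsilon})\,\mathrm{div}_{j,\varepsilon}(v)$ after carefully tracking how the distortion factor $\mathcal{D}$ rescales into $\widetilde{\mathcal{D}}$ and how the Jacobian factor $|\det(\nabla\sigma_j)|$ is absorbed. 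On the right-hand side, the Ginzburg--Landau potential terms $\frac{U\cdot\bar v}{\varepsilon^2}(1-|U|^2)$ and $\frac{|\det\nabla\sigma_j|\,\mathfrak R_j(x,U)\cdot\bar v}{\varepsilon^2}(1-|U|^2)$ lose their $\varepsilon^{-2}$ prefactors against the Jacobian and yield the first two terms of \eqref{eq:RescaledPDE}. The lower-order term $\widetilde{\mathcal{F}}_j(x,U,\nabla U)\cdot\bar v$ carries no $\varepsilon$-scaling of its own, so the change of variables produces the claimed factor $\varepsilon^2$ in front, with $\widetilde{\mathcal{F}}_j$ evaluated at $(x_0+\varepsilon z,\widehat U_{x_0,\varepsilon}(z),\varepsilon^{-1}\nabla\widehat U_{x_0,\varepsilon}(z))$.

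The growth bound \eqref{eq:normbound} is inherited verbatim from Lemma \ref{lem:PDEGluing}. For ellipticity of $\mathfrak{L}$, I would observe that the principal coefficients of $\mathfrak L$ are obtained from those in Lemma \ref{lem:ExtensionEllipticity} simply by evaluating at the rescaled point $x_0+\varepsilon z$; since the Legendre-Hadamard bound $\kone\min\{(1+y_2\kappa(y_1))^{-2},1\}|\xi|^2|\eta|^2$ obtained there depends only on the local curvature and the fixed parameter $\kone>0$, it persists uniformly under the rescaling. The main bookkeeping challenge I expect is verifying that the definitions of $\mathrm{div}_{j,\varepsilon}$ and $\widetilde{\mathcal{D}}$, with their prefactor $\frac{1}{|\det\nabla\sigma_j|}$ and the Jacobian inside the derivatives, are exactly what arise from applying the chain rule to $\mathrm{div}_j$ after the substitution $x=x_0+\varepsilon z$; this requires careful handling of how the tangential derivative and the inward Jacobian interact, but it is a mechanical calculation once the change of variables is written out.
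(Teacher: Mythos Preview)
Your proposal is correct and follows essentially the same approach as the paper: you introduce the unscaled test function $\bar v(x)=v(\varepsilon^{-1}(x-x_0))$, plug it into \eqref{def:ExtendedPDE}, and carry out the change of variables $x=x_0+\varepsilon z$, tracking the $\varepsilon$-powers exactly as the paper does; the growth bound and ellipticity are then inherited from Lemmas \ref{lem:PDEGluing} and \ref{lem:ExtensionEllipticity} respectively. The only cosmetic difference is that the paper presents the computation by starting from the left-hand side of \eqref{eq:RescaledPDE} and undoing the rescaling to recover \eqref{def:ExtendedPDE}, whereas you go in the forward direction, but the content is identical.
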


\begin{proof}
    We notice that by \eqref{def:ExtendedPDE} from Lemma \ref{lem:PDEGluing} we have
    \begin{align*}
        &\int_{\widehat{\mathcal{U}}_{j,x_{0},\varepsilon}}\!{}
        \biggl\{
        \kone
        \sum_{i=1}^{2}\bigl<\nabla\widehat{U}_{x_{0},\varepsilon}^{i}(z),
        \nabla{}v^{i}(z)\bigr>_{j,\varepsilon}
        +\ktwo\text{div}_{j,\varepsilon}(\widehat{U}_{x_{0},\varepsilon})(z)
        \text{div}_{j,\varepsilon}(v)(z)
        \biggr\}\\
        =&\int_{\widetilde{\mathcal{U}}_{j}}\!{}
        \kone
        \sum_{i=1}^{2}\bigl<\nabla{}U^{i}(x),
        \nabla{}\bigl[v^{i}(\varepsilon^{-1}(x-x_{0}))\bigr]\bigr>_{j}
        +\int_{\widetilde{\mathcal{U}}_{j}}\!{}
        \ktwo
        \text{div}_{j}(U)(x)
        \text{div}_{j}(v([\varepsilon^{-1}(\cdot-x_{0})]))(x)\\
        =&\int_{\widetilde{\mathcal{U}}_{j}\cap\Omega}\!{}\varepsilon^{-2}
        U(x)\cdot{}v([\varepsilon^{-1}(x-x_{0})])(1-|U(x)|^{2})\\
        &+\int_{\widetilde{\mathcal{U}}_{j}\setminus\Omega}\!{}
        \varepsilon^{-2}|\det(\nabla\sigma_{j}(x))|
        \sigma_{j}(U(x))\cdot{}v([\varepsilon^{-1}(x-x_{0})])(1-|U(x)|^{2})\\
        &+\int_{\widetilde{\mathcal{U}}_{j}}\!{}\widetilde{\mathcal{F}}_{j}
        (x,U(x),\nabla{}U(x))\cdot{}v([\varepsilon^{-1}(x-x_{0})])\\
        =&\int_{\widehat{\mathcal{U}}_{j,x_{0},\varepsilon}}\!{}
        \widehat{U}_{x_{0},\varepsilon}(z)\cdot{}v(z)
        (1-|\widehat{U}_{x_{0},\varepsilon}(z)|^{2})
        +\int_{[\widetilde{\mathcal{U}}_{j}\setminus\Omega]_{x_{0},\varepsilon}}\!{}
        |\det(\nabla\sigma_{j}(x_{0}+\varepsilon{}z))|
        \sigma_{j}(\widehat{U}_{x_{0},\varepsilon}(z))\cdot{}v(z)
        (1-|\widehat{U}_{x_{0},\varepsilon}(z)|^{2})\\
        &+\varepsilon^{2}\int_{\widehat{\mathcal{U}}_{j,x_{0},\varepsilon}}\!{}\widetilde{\mathcal{F}}_{j}
        (x_{0}+\varepsilon{}z,\widehat{U}_{x_{0},\varepsilon}(z),
        \varepsilon^{-1}\nabla{}\widehat{U}_{x_{0},\varepsilon}(z))\cdot{}v(z).
    \end{align*}
    Observe that \eqref{eq:normbound} also follows from Lemma \ref{lem:PDEGluing}
    and ellipticity follows from a similar proof as Lemma
    \ref{lem:ExtensionEllipticity}.
\end{proof}

Next, we use the $O(1)-$bound from Lemma \ref{lem:Order1Potential} in order
to obtain an $O(1)-$bound for the $L^{4}$ norm.
\begin{lemma}\label{lem:L4Estimate}
    Suppose $u\in{}W_{T}^{1,2}(\Omega;\mathbb{R}^{2})$ is a minimizer of
    \eqref{def:Energy} among functions in $W_{T}^{1,2}(\Omega;\mathbb{R}^{2})$,
    $U$ is the extension of $u$ defined as in
    \eqref{def:uExtension}, and $\widehat{U}$ is the rescaled form of $U$ defined in
    \eqref{def:rescaledU}.
    Then
    \begin{equation}\label{eq:L4RecaledEstimate}
        \lVert\widehat{U}\rVert_{L^{4}(B_{1}(0);\mathbb{R}^{2})}
        \le{}C(\Omega,\ktwo).
    \end{equation}
\end{lemma}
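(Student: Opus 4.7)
The plan is to reduce the claim to the uniform potential bound supplied by Lemma \ref{lem:Order1Potential}, which gives
\[
\int_\Omega (|u|^2-1)^2 \le C(\Omega,\ktwo)\,\varepsilon^2.
\]
Since the statement is phrased about $\widehat{U}$ on $B_1(0)$ and by the change of variables $x=x_0+\varepsilon z$ one has $\int_{B_1(0)}|\widehat{U}|^4 = \varepsilon^{-2}\int_{B_\varepsilon(x_0)\cap\widetilde{\Omega}}|U|^4$, the target reduces to the local estimate $\int_{B_\varepsilon(x_0)\cap\widetilde{\Omega}}|U|^4 \le C\varepsilon^2$, uniformly in $\varepsilon\in(0,\varepsilon_0)$ and in $x_0\in\overline{\Omega}$.

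First, I would obtain the corresponding local bound for $u$ on $\Omega$. Using the pointwise inequality $|u|^4 \le 2(|u|^2-1)^2 + 2$ and integrating over $E = B_\varepsilon(x_0)\cap\Omega$ gives
\[
\int_{E}|u|^4 \le 2\int_{E}(|u|^2-1)^2 + 2|E| \le 2C\varepsilon^2 + 2\pi\varepsilon^2 \le C\varepsilon^2,
\]
where the first term uses Lemma \ref{lem:Order1Potential}. Next I would transfer this to $U$ on the exterior piece $B_\varepsilon(x_0)\setminus\Omega$. By taking $\varepsilon_0$ smaller than a fraction of $\mathrm{inj}(\partial\Omega)$, for every $x_0\in\overline{\Omega}$ the ball $B_\varepsilon(x_0)$ lies in a single chart $\widetilde{\mathcal{U}}_j$ (or entirely inside $\Omega$, in which case the second step is vacuous). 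By the definition \eqref{def:uExtension} and orthogonality of $\{\tau,\nu\}$, $|U(x)|=|u(\sigma_j(x))|$ on $\widetilde{\mathcal{U}}_j\setminus\Omega$. Since $\sigma_j$ is a $C^{1,1}$ diffeomorphism with Jacobian bounded above and below (the bound on $\kappa$ was already used to define the charts), a change of variables gives
\[
\int_{B_\varepsilon(x_0)\setminus\Omega}|U|^4 \le C\int_{\sigma_j(B_\varepsilon(x_0)\setminus\Omega)}|u|^4 \le C\int_{B_{C\varepsilon}(x_0')\cap\Omega}|u|^4 \le C\varepsilon^2
\]
by the first step applied at the reflected point $x_0' = \sigma_j(x_0)$ with a slightly enlarged radius.

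Combining these two estimates yields $\int_{B_\varepsilon(x_0)\cap\widetilde{\Omega}}|U|^4 \le C\varepsilon^2$, and the rescaling identity above gives the desired bound. The calculation is essentially routine once Lemma \ref{lem:Order1Potential} is in hand; the only points requiring care are choosing $\varepsilon_0$ small enough that $B_\varepsilon(x_0)$ is captured by one extended chart $\widetilde{\mathcal{U}}_j$, and verifying that the Jacobian of $\sigma_j$ is controlled by constants depending only on $\Omega$, which follows from the $C^{2,1}$ regularity of $\partial\Omega$ and the formula for $\widetilde{X}$ in \eqref{TangentNormal:Def}.
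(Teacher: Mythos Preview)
Your argument is correct and uses the same ingredients as the paper: the pointwise inequality $|u|^{4}\le 2(|u|^{2}-1)^{2}+2$, Lemma \ref{lem:Order1Potential}, the reflection $|U|=|u\circ\sigma_{j}|$ together with the Jacobian bound for $\sigma_{j}$, and finally the rescaling $x=x_{0}+\varepsilon z$.

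The one genuine difference is that you localize, and this is in fact necessary. The paper only records the global bound $\|U\|_{L^{4}(\widetilde{\Omega})}\le C(\Omega,\ktwo)$ and then asserts \eqref{eq:L4RecaledEstimate}; but the change of variables gives $\|\widehat{U}\|_{L^{4}(B_{1}(0))}^{4}=\varepsilon^{-2}\int_{B_{\varepsilon}(x_{0})}|U|^{4}$, so a global $O(1)$ bound on $\int_{\widetilde{\Omega}}|U|^{4}$ is not enough. Your version supplies exactly the missing local estimate $\int_{B_{\varepsilon}(x_{0})\cap\widetilde{\Omega}}|U|^{4}\le C\varepsilon^{2}$, using that both $\int_{\Omega}(|u|^{2}-1)^{2}\le C\varepsilon^{2}$ (from Lemma \ref{lem:Order1Potential}) and $|B_{\varepsilon}(x_{0})|\le\pi\varepsilon^{2}$ are of the right order. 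The remaining points you flag (choosing $\varepsilon_{0}$ below the Lebesgue number of the cover $\{\widetilde{\mathcal{U}}_{j}\}$, and the uniform Jacobian bound for $\sigma_{j}$ coming from the $C^{2,1}$ regularity of $\partial\Omega$) are routine and correctly identified.
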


\begin{proof}
    First observe that by Lemma \ref{lem:Order1Potential} we have
    \begin{equation*}
        \int_{\Omega}\!{}|u|^{4}
        \le2\int_{\Omega}\!{}\bigl(|u|^{2}-1\bigr)^{2}
        +2|\Omega|
        \leq 16 \, \varepsilon^{2}\,  C(\Omega,\ktwo)
        +2|\Omega|
        \le{}C(\Omega,\ktwo).
    \end{equation*}
    By appealing to a partition of unity and the coordinate representation
    of $U$ in $\widetilde{\mathcal{U}}_{j}$ for $j=1,2,\ldots,N$ we now conclude that
    \begin{equation*}
        \left\|U\right\|_{L^{4}(\widetilde{\Omega};\mathbb{R}^{2})}
        \le{}C(\Omega,\ktwo).
    \end{equation*}
    By our choice of range of $\varepsilon$ we have that
    $\widehat{B}_{1}(0)\subseteq\widetilde{\Omega}$ and hence,
    by the definition of $\widehat{U}$ and the above inequality we obtain
    \eqref{eq:L4RecaledEstimate}.
\end{proof}

Now, we show that we can find $O(1)-$bounds on the $W^{2,\frac{4}{3}}$ norm of
$\widehat{U}$.
We demonstrate this by showing that the elliptic operator $\mathfrak{L}$ can be uniformly bounded in $L^{\frac{4}{3}}$.
\begin{lemma}\label{lem:UniformBoundWidehatU}
    Suppose $u\in{}W_{T}^{1,2}(\Omega;\mathbb{R}^{2})$ is a minimizer of \eqref{def:Energy} among functions in $W_{T}^{1,2}(\Omega;\mathbb{R}^{2})$,
    $U$ is the extension of $u$ defined as in
    \eqref{def:uExtension}, and $\widehat{U}$ is the rescaled form of $U$ defined in
    \eqref{def:rescaledU}.
    Then there exists a neighbourhood $\mathcal{O}\subseteq{}\widehat{B}_{1}(0)$
    containing $0$ such that
    \begin{equation*}
        \Vert\widehat{U}\Vert_{W^{2,\frac{4}{3}}(\mathcal{O};\mathbb{R}^{2})}\le{}C(\Omega,\ktwo)
        \, .
    \end{equation*}
\end{lemma}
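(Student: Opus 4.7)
The plan is to rewrite the rescaled equation \eqref{eq:RescaledPDE} in the form $\mathfrak{L}\widehat{U}=G$ and then invoke interior $L^{4/3}$ regularity (Calderón--Zygmund theory) for Legendre--Hadamard elliptic systems on a slightly smaller ball. First I would fix an index $j$ so that $x_0 \in \overline{\mathcal{U}_j}$; since $\varepsilon \le \tfrac14\,\mathrm{inj}(\partial\Omega)$, after possibly shrinking $\varepsilon_0$ relative to the chart size, the inclusion $\widehat{B}_1(0)\subseteq \widehat{\mathcal{U}}_{j,x_0,\varepsilon}$ holds and \eqref{eq:RescaledPDE} makes sense on $\widehat{B}_1(0)$.

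The core of the argument is the uniform $L^{4/3}$ bound on the right-hand side of \eqref{eq:RescaledPDE}. Split it as $G=G_1+G_2$, where $G_1$ collects the cubic source terms $\widehat{U}(1-|\widehat{U}|^2)$ and its reflected counterpart $|\det(\nabla\sigma_j)|\,\sigma_j(\widehat{U})(1-|\widehat{U}|^2)$, while $G_2 = \varepsilon^2\widetilde{\mathcal{F}}_j(x_0+\varepsilon z,\widehat{U},\varepsilon^{-1}\nabla\widehat{U})$. For $G_1$, the pointwise bound $|\widehat{U}||1-|\widehat{U}|^2|\le |\widehat{U}|+|\widehat{U}|^3$ combined with Lemma \ref{lem:L4Estimate} gives $\|G_1\|_{L^{4/3}(\widehat{B}_1(0))}\le C(\Omega,\ktwo)$. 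For $G_2$, the growth estimate \eqref{eq:normbound} yields $|G_2|\le C(\varepsilon^2+\varepsilon^2|\widehat{U}|+\varepsilon|\nabla\widehat{U}|)$; the first two pieces are trivially $O(\varepsilon^2)$ in $L^{4/3}$, so the only nontrivial contribution is $\varepsilon\|\nabla\widehat{U}\|_{L^{4/3}(\widehat{B}_1(0))}$.

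To control $\varepsilon\|\nabla\widehat{U}\|_{L^{4/3}}$ I would use the upper bound on the energy established in the course of Lemma \ref{lem:Order1Potential}, namely $\int_\Omega |\nabla u|^2 \le C|\log\varepsilon|$. Since the extension in \eqref{def:uExtension} is built from the smooth charts $\widetilde{\psi}_j$ with Jacobians bounded above and below, one gets $\int_{\widetilde{\Omega}}|\nabla U|^2\le C|\log\varepsilon|$. A change of variables then shows $\|\nabla\widehat{U}\|_{L^2(\widehat{B}_1(0))}^2 = \int_{B_\varepsilon(x_0)}|\nabla U|^2 \le C|\log\varepsilon|$, so by Hölder $\varepsilon\|\nabla\widehat{U}\|_{L^{4/3}(\widehat{B}_1(0))}\le C\varepsilon|\log\varepsilon|^{1/2}\to 0$. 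Putting these pieces together gives $\|G\|_{L^{4/3}(\widehat{B}_1(0))}\le C(\Omega,\ktwo)$.

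For the final step I would apply the interior $W^{2,p}$ Calderón--Zygmund estimate for linear Legendre--Hadamard elliptic systems (see \cite{Gi93}) with $p=4/3$ to $\mathfrak{L}\widehat{U}=G$, concluding
\begin{equation*}
    \|\widehat{U}\|_{W^{2,4/3}(\mathcal{O};\mathbb{R}^2)}
    \le C\bigl(\|G\|_{L^{4/3}(\widehat{B}_1(0))}+\|\widehat{U}\|_{L^{4/3}(\widehat{B}_1(0))}\bigr)
    \le C(\Omega,\ktwo)
\end{equation*}
for any $\mathcal{O}\Subset \widehat{B}_1(0)$ containing $0$. The main obstacle is ensuring that the constant in the elliptic regularity estimate is $\varepsilon$-independent: the coefficients of $\mathfrak{L}$ depend on $x_0+\varepsilon z$, so I must verify that their $C^{0,1}$ norms on $\widehat{B}_1(0)$ and the Legendre--Hadamard ellipticity constant from Lemma \ref{lem:ExtensionEllipticity} are uniformly controlled. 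This is the case because $\kappa\in C^{0,1}$ and $\widetilde{\psi}_j^{-1}$ is $C^{1,1}$, and because on $\widehat{B}_1(0)$ one has $|\widetilde{\psi}_j^{-1}(x_0+\varepsilon z)-\widetilde{\psi}_j^{-1}(x_0)|=O(\varepsilon)$, so the principal-part coefficients converge to constants depending only on $x_0$ as $\varepsilon\to 0$, with bounds depending only on $\Omega$ and $\ktwo$.
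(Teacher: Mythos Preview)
Your proposal is correct and follows essentially the same approach as the paper: bound the cubic source term via Lemma~\ref{lem:L4Estimate}, bound the lower-order term $\varepsilon^{2}\widetilde{\mathcal{F}}_{j}$ using the growth estimate \eqref{eq:normbound} together with the energy upper bound $\int_{\widetilde\Omega}|\nabla U|^{2}\le C|\log\varepsilon|$, and then invoke interior $L^{4/3}$ regularity for Legendre--Hadamard systems from \cite{Gi93}. You in fact supply more detail than the paper does (the change-of-variables computation for $\varepsilon\|\nabla\widehat{U}\|_{L^{4/3}}$ and the verification that the ellipticity constant and coefficient bounds of $\mathfrak{L}$ are $\varepsilon$-uniform), and your intermediate bound $C\varepsilon|\log\varepsilon|^{1/2}$ is slightly sharper than the paper's stated $C\varepsilon|\log\varepsilon|$.
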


\begin{proof}
    By Lemma \ref{lem:L4Estimate} we have that
    \begin{equation}\label{eq:RHS_PDE_hatU_1}
        \Vert\widehat{U}(1-|\widehat{U}|^{2})\Vert
        _{L^{\frac{4}{3}}(\widehat{B}_{1}(0);\mathbb{R}^{2})}
        \le{}C(\Omega,\ktwo)
    \end{equation}
    as well as
    \begin{equation}\label{eq:RHS_PDE_hatU_2}
        \left\||\det(\nabla\sigma_{j}(x_{0}+\varepsilon{}(\cdot)))|
        \widehat{U}(1-|\widehat{U}|^{2})\right\|
        _{L^{\frac{4}{3}}(\widehat{B}_{1}(0);\mathbb{R}^{2})}
        \le{}C(\Omega,\ktwo).
    \end{equation}
    In addition, we have that
    \begin{equation}\label{eq:RHS_PDE_hatU_3}
        \varepsilon^{2}
        \Vert\widetilde{\mathcal{F}}(x_{0}+\varepsilon{}z,\widehat{U}(z),
        \varepsilon^{-1}\nabla\widehat{U}(z))\Vert_{L^{\frac{4}{3}}
        (\widehat{B}_{1}(0);\mathbb{R}^{2})}
        \le{}C(\Omega,\ktwo)\varepsilon
        \mathopen{}|\log(\varepsilon)|\mathclose{}.
    \end{equation}
    Since the RHS of the PDE satisfied by $\widehat{U}$ in \eqref{eq:RescaledPDE} can be estimated using \eqref{eq:RHS_PDE_hatU_1},
    \eqref{eq:RHS_PDE_hatU_2} and \eqref{eq:RHS_PDE_hatU_3},
    we get that by regularity for elliptic systems in $L^p$ and duality arguments (see \cite[Section 4.3]{Gi93}
    %(see Theorem $7$ of \cite{Si} 
    combined with standard localization arguments)
    that the LHS of that PDE satisfies    
    \begin{equation*}
        \Vert\mathfrak{L}(\widehat{U})\Vert_{L^{\frac{4}{3}}(\mathcal{O};
        \mathbb{R}^{2})}\le{}C(\Omega,\ktwo).
    \end{equation*}
    By interior regularity we obtain the desired regularity of $\widehat{U}$
    on $\mathcal{O}\subseteq\widehat{B}_{1}(0)$.
\end{proof}

Next we provide a bootstrapping argument to show that we can obtain uniform Lipschitz bounds for $\widehat{U}$.
\begin{lemma}\label{lem:LipschtzBound}
    Suppose $u\in{}W_{T}^{1,2}(\Omega;\mathbb{R}^{2})$ is a minimizer of
    \eqref{def:Energy} among functions in $W_{T}^{1,2}(\Omega;\mathbb{R}^{2})$,
    $U$ is the extension of $u$ defined as in
    \eqref{def:uExtension}, and $\widehat{U}$ is the rescaled form of $U$ defined in
    \eqref{def:rescaledU}.
    Then there exists a neighbourhood $\mathcal{O}\subseteq{}\widehat{B}_{1}(0)$
    containing $0$ such that
    \begin{equation*}
        \Vert\widehat{U}\Vert_{L^{\infty}(\mathcal{O};\mathbb{R}^{2})}\le{}C(\Omega,\ktwo)
        \, .
    \end{equation*}
    If $\Omega$ has $C^{3,1}-$boundary then we also have
    \begin{equation*}
        \Vert\widehat{U}\Vert_{C^{0,1}(\mathcal{O};\mathbb{R}^{2})}\le{}C(\Omega,\ktwo)
        \, .
    \end{equation*}
\end{lemma}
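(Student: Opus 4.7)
The plan is to combine Lemma \ref{lem:UniformBoundWidehatU} with standard Sobolev embeddings and a single bootstrap step of elliptic regularity applied to \eqref{eq:RescaledPDE}. The $L^\infty$ bound is essentially immediate, while the Lipschitz bound requires us to push the integrability of $\nabla\widehat{U}$ to a level where a $2$D Morrey embedding delivers $C^{0,1}$ regularity.

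For the $L^\infty$ estimate, Lemma \ref{lem:UniformBoundWidehatU} supplies a uniform-in-$\varepsilon$ bound on $\|\widehat{U}\|_{W^{2,4/3}(\mathcal{O})}$. Since the ambient dimension is $n=2$, we have $2-2/(4/3)=1/2>0$, so Morrey's inequality yields the continuous embedding $W^{2,4/3}(\mathcal{O})\hookrightarrow C^{0,1/2}(\mathcal{O})\subseteq L^\infty(\mathcal{O})$, which gives the first claim, possibly after shrinking $\mathcal{O}$ slightly.

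For the Lipschitz estimate under the $C^{3,1}$ hypothesis, I would bootstrap once. From $\widehat{U}\in W^{2,4/3}$, the $2$D Sobolev embedding $W^{1,4/3}\hookrightarrow L^4$ provides a uniform $L^4$ bound on $\nabla\widehat{U}$. I then estimate the right-hand side of \eqref{eq:RescaledPDE} in $L^4(\mathcal{O})$: the two cubic terms $\widehat{U}(1-|\widehat{U}|^2)$ and $|\det\nabla\sigma_j|\,\sigma_j(\widehat{U})(1-|\widehat{U}|^2)$ lie in $L^\infty\subseteq L^4$ thanks to the $L^\infty$ bound on $\widehat{U}$ just established and the boundedness of $|\det\nabla\sigma_j|$ (a consequence of the $C^{2,1}$ regularity of $\partial\Omega$), while for the remaining nonlinearity the linear-growth estimate from Lemma \ref{lem:PDEGluing} gives
\begin{equation*}
\bigl|\varepsilon^2\widetilde{\mathcal{F}}_j(x_0+\varepsilon z,\widehat{U},\varepsilon^{-1}\nabla\widehat{U})\bigr|\le C\bigl(\varepsilon^2+\varepsilon^2|\widehat{U}|+\varepsilon|\nabla\widehat{U}|\bigr),
\end{equation*}
which is uniformly bounded in $L^4$. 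Interior $W^{2,4}$-regularity for elliptic systems (cf.\ \cite[Section 4.3]{Gi93}) applied to the operator $\mathfrak{L}$, whose Legendre–Hadamard ellipticity was established in Lemma \ref{lem:ExtensionEllipticity}, then produces $\widehat{U}\in W^{2,4}(\mathcal{O}')$ on a possibly smaller neighbourhood $\mathcal{O}'\subseteq\mathcal{O}$; the $C^{3,1}$ assumption enters here through the bound $|\nabla_{z,p}\widetilde{\mathcal{F}}_j|\le C$ of Lemma \ref{lem:PDEGluing}, needed to treat the nonlinearity as a bounded perturbation. The $2$D embedding $W^{2,4}(\mathcal{O}')\hookrightarrow C^{1,1/2}(\mathcal{O}')$ then yields the desired Lipschitz bound.

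The main obstacle is the factor $\varepsilon^{-1}\nabla\widehat{U}$ sitting inside $\widetilde{\mathcal{F}}_j$: only a single factor of $\varepsilon$ from the prefactor $\varepsilon^2$ is available to compensate it, so the bootstrap closes only if $\nabla\widehat{U}$ itself is uniformly bounded in $L^p$ at each stage. The choice of exponent $p=4$ is forced by this constraint together with the $2$D Sobolev chain $W^{2,4/3}\to W^{1,4}\to W^{2,4}\to C^{1,1/2}$. A secondary technicality is uniformity in $\varepsilon$ and in $x_0\in\overline{\Omega}$ of the ellipticity constants of $\mathfrak{L}$ and of the $L^\infty$ norms of its coefficients; both follow from compactness of $\partial\Omega$ and the explicit expressions for the distortion factor and metric coefficients derived in Section \ref{sec:Preliminaries}.
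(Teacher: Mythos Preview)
Your $L^\infty$ argument is identical to the paper's: Lemma \ref{lem:UniformBoundWidehatU} gives $\widehat{U}\in W^{2,4/3}$, and then $W^{2,4/3}\hookrightarrow W^{1,4}\hookrightarrow C^{0,1/2}$ in two dimensions.

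For the Lipschitz bound you take a different route than the paper. The paper shows that the right-hand side of \eqref{eq:RescaledPDE} lies in $W^{1,4/3}$, differentiates the equation, and applies the same $L^{4/3}$ elliptic theory once more to obtain $\widehat{U}\in W^{3,4/3}$; Sobolev embedding then yields $W^{3,4/3}\hookrightarrow W^{2,4}\hookrightarrow C^{1,1/2}$. You instead bootstrap the integrability exponent: from $W^{2,4/3}$ you extract $\nabla\widehat{U}\in L^4$, estimate the right-hand side of \eqref{eq:RescaledPDE} directly in $L^4$, and invoke interior $W^{2,4}$ regularity to reach $W^{2,4}\hookrightarrow C^{1,1/2}$. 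Both arguments are valid and land in the same space; yours is one step shorter.

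There is, however, an inaccuracy in your explanation of where the $C^{3,1}$ hypothesis enters. Your $L^4$ estimate on the right-hand side uses only the linear growth bound $|\widetilde{\mathcal{F}}_j(x,z,p)|\le C(1+|z|+|p|)$, which Lemma \ref{lem:PDEGluing} provides already under $C^{2,1}$ regularity; the bound on $\nabla_{z,p}\widetilde{\mathcal{F}}_j$ plays no role in the inequality you wrote down. In the paper's approach the extra boundary regularity is genuinely used: when one differentiates the right-hand side to place it in $W^{1,4/3}$, both $\nabla_{z,p}\widetilde{\mathcal{F}}_j$ and $\nabla_x\widetilde{\mathcal{F}}_j$ appear, and it is the latter (involving a further derivative of the curvature) that forces $C^{3,1}$. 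As written, your argument does not explain why the Lipschitz conclusion should require anything beyond $C^{2,1}$; either that hypothesis is superfluous for your route, or you should identify where it is actually consumed.
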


\begin{proof}
    By Lemma \ref{lem:UniformBoundWidehatU} we have that
    $\widehat{U}\in{}W^{2,\frac{4}{3}}(\widehat{B}_{1}(0);\mathbb{R}^{2})$
    and hence by the Sobolev-Embedding Theorem and Morrey's inequality we have
    \begin{equation*}
        \lVert\widehat{U}\rVert_{L^{\infty}(\mathcal{O};\mathbb{R}^{2})}
        \le{}C\lVert\widehat{U}\rVert_{W^{1,4}(\mathcal{O};\mathbb{R}^{2})}
        \le{}C\lVert\widehat{U}\rVert_{W^{2,\frac{4}{3}}(\mathcal{O};\mathbb{R}^{2})}
        \le{}C(\Omega,\ktwo).
    \end{equation*}
    Next we show that we can control the $L^{\infty}-$norm of the gradient
    provided $\Omega$ has $C^{3,1}-$boundary.
    Observe that since $\widehat{U}\in{}W^{1,4}(\mathcal{O};\mathbb{R}^{2})$
    then
    \begin{equation*}
        \Vert\widehat{U}(1-|\widehat{U}|^{2})\Vert
        _{W^{1,\frac43}(\mathcal{O};\mathbb{R}^{2})}
        \le{}C(\Omega,\ktwo)
    \end{equation*}
    and
    \begin{equation*}
        \left\||\det(\nabla\sigma_{j}(x_{0}+\varepsilon{}(\cdot)))|
        \widehat{U}(1-|\widehat{U}|^{2})\right\|
        _{W^{1,\frac43}(\mathcal{O};\mathbb{R}^{2})}
        \le{}C(\Omega,\ktwo).
    \end{equation*}
    In addition, we have
    \begin{equation*}
        \varepsilon^{2}\left\|\widetilde{\mathcal{F}}(x_{0}+\varepsilon{}z,
        \widehat{U}(z),\varepsilon^{-1}\nabla\widehat{U}(z))\right\|_{
        W^{1,\frac{4}{3}}(\mathcal{O};\mathbb{R}^{2})}\le{}C(\Omega,\ktwo).
    \end{equation*}
    Differentiating both sides of our elliptic PDE we now obtain (appealing
    again to Section 4.3 of \cite{Gi93}) %Theorem 7 of \cite{Si}
    that $\widehat{U}\in{}W^{3,\frac{4}{3}}(\mathcal{O};\mathbb{R}^{2})$.
    By the Sobolev-Embedding Theorem and Morrey's inequality we now have
    \begin{equation*}
        \lVert\widehat{U}\rVert_{C^{0,1}(\mathcal{O};\mathbb{R}^{2})}
        \le{}C\lVert\widehat{U}\rVert_{W^{2,4}(\mathcal{O};\mathbb{R}^{2})}
        \le{}C\lVert\widehat{U}\rVert_{W^{3,\frac{4}{3}}(\mathcal{O};\mathbb{R}^{2})}
        \le{}C(\Omega,\ktwo).
    \end{equation*}
\end{proof}

\noindent{}Finally, we provide a proof of Theorem \ref{thm:main}.\\[5pt]
{\it Proof of Theorem \ref{thm:main}}\\[3pt]
Since $x_{0}\in\overline{\Omega}$ was arbitrary then the $L^{\infty}$ bound
follows from Lemma \ref{lem:LipschtzBound} and compactness of
$\overline{\Omega}$.
Similarly, when $\Omega$ has $C^{3,1}-$boundary, since
$\nabla\widehat{U}(z)=\varepsilon\nabla{}u(x_{0}+\varepsilon{}z)$ then
the Lipschitz bound follows from Lemma \ref{lem:LipschtzBound} and
compactness of $\overline{\Omega}$.
\qed

\appendix
\appendixpage
\addappheadtotoc

\section{Calculations}
%\addcontentsline{toc}{section}{Appendix A: Calculations}
\label{app:calculations}
In this appendix, we provide comprehensive calculations regarding the PDE
gluing from Lemma \ref{lem:PDEGluing}.\\

\noindent{}{\it Lemma 3.1 calculations}\\[3pt]
Since $u$ satisfies \eqref{eq:WeakForm} then for $v\in{}W_{T}^{1,2}(\Omega;\mathbb{R}^{2})$
    satisfying $\text{supp}(v)\subseteq{}\overline{\mathcal{U}}_{j}$ we have, after writing
    $u(x)=
    \widetilde{u}_{\tau}\bigl(\widetilde{\psi}_{j}^{-1}(x)\bigr)
    \tau\bigl((\widetilde{\psi}_{j}^{-1})^{1}(x)\bigr)
    +\widetilde{u}_{\nu}\bigl(\widetilde{\psi}_{j}^{-1}(x)\bigr)
    \nu\bigl((\widetilde{\psi}_{j}^{-1})^{1}(x)\bigr)$, and similarly for
    $v$, as well as integrating by parts that
    \begin{align*}
        &\kone\int_{\mathcal{U}_{j}}\!{}
        \Bigl[\nabla\widetilde{\psi}_{j}^{-1}(x)^{T}
        (\nabla{}\widetilde{u}_{\tau})\bigl(\widetilde{\psi}_{j}^{-1}(x)\bigr)\Bigr]\cdot
        \Bigl[\nabla\widetilde{\psi}_{j}^{-1}(x)^{T}
        \nabla{}\widetilde{v}_{\tau}\bigl(\widetilde{\psi}_{j}^{-1}(x)\bigr)\Bigr]\\
        &+\kone\int_{\mathcal{U}_{j}}\!{}
        \Bigl[\nabla\widetilde{\psi}_{j}^{-1}(x)^{T}
        (\nabla{}\widetilde{u}_{\nu})\bigl(\widetilde{\psi}_{j}^{-1}(x)\bigr)\Bigr]\cdot
        \Bigl[\nabla\widetilde{\psi}_{j}^{-1}(x)^{T}
        \nabla{}\widetilde{v}_{\nu}\bigl(\widetilde{\psi}_{j}^{-1}(x)\bigr)\Bigr]\\
        &-\kone\int_{\mathcal{U}_{j}}\!{}
        \Bigl[\nabla\widetilde{\psi}_{j}^{-1}(x)^{T}
        (\nabla{}\widetilde{u}_{\tau})\bigl(\widetilde{\psi}_{j}^{-1}(x)\bigr)\Bigr]\cdot
        \biggl[\frac{\kappa\bigl((\widetilde{\psi}_{j}^{-1})^{1}(x)\bigr)
        \widetilde{v}_{\nu}(\widetilde{\psi}_{j}^{-1}(x))}
        {1-(\widetilde{\psi}_{j}^{-1})^{2}(x)\kappa\bigl((\widetilde{\psi}_{j}^{-1})^{1}(x)\bigr)}
        \tau\bigl(\widetilde{\psi}_{j}^{-1}(x)\bigr)\biggr]\\
        &+\kone\int_{\mathcal{U}_{j}}\!{}
        \Bigl[\nabla\widetilde{\psi}_{j}^{-1}(x)^{T}
        (\nabla{}\widetilde{u}_{\nu})\bigl(\widetilde{\psi}_{j}^{-1}(x)\bigr)\Bigr]\cdot
        \biggl[\frac{\kappa\bigl((\widetilde{\psi}_{j}^{-1})^{1}(x)\bigr)
        \widetilde{v}_{\tau}\bigl(\widetilde{\psi}_{j}^{-1}(x)\bigr)}
        {1-(\widetilde{\psi}_{j}^{-1})^{2}(x)\kappa\bigl((\widetilde{\psi}_{j}^{-1})^{1}(x)\bigr)}
        \nu\bigl(\widetilde{\psi}_{j}^{-1}(x)\bigr)\biggr]\\
        &+\sum_{i=1}^{2}\kone\int_{\mathcal{U}_{j}}\!{}\text{div}\biggl[
        \frac{\widetilde{u}_{\nu}\bigl(\widetilde{\psi}_{j}^{-1}(x)\bigr)
        \kappa\bigl((\widetilde{\psi}_{j}^{-1})^{1}(x)\bigr)}
        {1-(\widetilde{\psi}_{j}^{-1})^{2}(x)\kappa\bigl((\widetilde{\psi}_{j}^{-1})^{1}(x)\bigr)}
        \tau^{i}\bigl((\widetilde{\psi}_{j}^{-1})^{1}(x)\bigr)
        \tau\bigl((\widetilde{\psi}_{j}^{-1})^{1}(x)\bigr)\biggr]
        v^{i}(x)\\
        &-\sum_{i=1}^{2}\kone\int_{\mathcal{U}_{j}}\!{}\text{div}\biggl[
        \frac{\widetilde{u}_{\tau}\bigl(\widetilde{\psi}_{j}^{-1}(x)\bigr)
        \kappa\bigl((\widetilde{\psi}_{j}^{-1})^{1}(x)\bigr)}
        {1-(\widetilde{\psi}_{j}^{-1})^{2}(x)\kappa\bigl((\widetilde{\psi}_{j}^{-1})^{1}(x)\bigr)}
        \nu^{i}\bigl((\widetilde{\psi}_{j}^{-1})^{1}(x)\bigr)
        \tau\bigl((\widetilde{\psi}_{j}^{-1})^{1}(x)\bigr)\biggr]
        v^{i}(x)\\
        &+\ktwo\int_{\mathcal{U}_{j}}
        \Bigl[
        \partial_{y_{1}}\widetilde{u}_{\tau}\bigl(\widetilde{\psi}_{j}^{-1}(x)\bigr)
        \partial_{\tau}(\widetilde{\psi}_{j}^{-1})^{1}(x)
        +\partial_{y_{2}}\widetilde{u}_{\nu}\bigl(\widetilde{\psi}_{j}^{-1}(x)\bigr)
        \partial_{\nu}(\widetilde{\psi}_{j}^{-1})^{2}(x)
        \Bigr]
        \Bigl[\partial_{y_{1}}\widetilde{v}_{\tau}\bigl(\widetilde{\psi}_{j}^{-1}(x)\bigr)
        \partial_{\tau}(\widetilde{\psi}_{j}^{-1})^{1}(x)\Bigr]\\
        &+\ktwo\int_{\mathcal{U}_{j}}
        \Bigl[
        \partial_{y_{1}}\widetilde{u}_{\tau}\bigl(\widetilde{\psi}_{j}^{-1}(x)\bigr)
        \partial_{\tau}(\widetilde{\psi}_{j}^{-1})^{1}(x)
        +\partial_{y_{2}}\widetilde{u}_{\nu}\bigl(\widetilde{\psi}_{j}^{-1}(x)\bigr)
        \partial_{\nu}(\widetilde{\psi}_{j}^{-1})^{2}(x)
        \Bigr]
        \Bigl[\partial_{y_{2}}\widetilde{v}_{\nu}\bigl(\widetilde{\psi}_{j}^{-1}(x)\bigr)
        \partial_{\nu}(\widetilde{\psi}_{j}^{-1})^{2}(x)\Bigr]\\
        &+\ktwo\int_{\mathcal{U}_{j}}\!{}
        \Bigl[
        \partial_{y_{1}}\widetilde{u}_{\tau}\bigl(\widetilde{\psi}_{j}^{-1}(x)\bigr)
        \partial_{\tau}(\widetilde{\psi}_{j}^{-1})^{1}(x)
        +\partial_{y_{2}}\widetilde{u}_{\nu}\bigl(\widetilde{\psi}_{j}^{-1}(x)\bigr)
        \partial_{\nu}(\widetilde{\psi}_{j}^{-1})^{2}(x)
        \Bigr]\Bigl[\widetilde{v}_{\nu}(\widetilde{\psi}_{j}^{-1}(x))
        \text{div}\bigl(\nu\bigl((\widetilde{\psi}_{j}^{-1})^{1}(x)\bigr)\bigr)\Bigr]
        \\
        &-\ktwo\int_{\mathcal{U}_{j}}\!{}
        \nabla\Big[
        \widetilde{u}_{\nu}\bigl(\widetilde{\psi}_{j}^{-1}(x)\bigr)
        \text{div}\bigl(\nu\bigl((\widetilde{\psi}_{j}^{-1})^{1}(x)\bigr)\bigr)\Bigr]\cdot
        \Bigl[\widetilde{v}_{\tau}\bigl(\widetilde{\psi}_{j}^{-1}(x)\bigr)
        \tau\bigl((\widetilde{\psi}_{j}^{-1})^{1}(x)\bigr)+
        \widetilde{v}_{\nu}\bigl(\widetilde{\psi}_{j}^{-1}(x)\bigr)
        \nu\bigl((\widetilde{\psi}_{j}^{-1})^{1}(x)\bigr)\Bigr]
        \\
        =&\int_{\mathcal{U}_{j}}\!{}
        \frac{u\cdot{}v}{\varepsilon^{2}}(1-|u|^{2}).
    \end{align*}
    Notice that we have used that $\widetilde{u}_{\nu}=0=\widetilde{v}_{\nu}$ on $\partial\Omega$
    when integrating by parts.
    Notice also that since the tangential part of $v$ and the normal part of $v$
    vary independently then, after grouping many of the terms together as well as
    using that
    \begin{equation*}
        \mathcal{G}_{j}(x,\widetilde{\psi}_{j}^{-1}(x))=
        \nabla\widetilde{\psi}_{j}^{-1}(x)\nabla\widetilde{\psi}_{j}^{-1}(x)^{T}=
        \begin{pmatrix}
            \frac{1}
            {\bigl(1-(\widetilde{\psi}_{j}^{-1})^{2}(x)\kappa\bigl((\widetilde{\psi}_{j}^{-1})^{1}(x)\bigr)\bigr)^{2}}& 0\\
            0& 1
        \end{pmatrix}
    \end{equation*}
    we have
    \begin{align}
        &\kone\int_{\mathcal{U}_{j}}\!{}
        \biggl[
        \frac{\partial_{y_{1}}\widetilde{u}_{\tau}\bigl(\widetilde{\psi}_{j}^{-1}(x)\bigr)
        \partial_{y_{1}}\widetilde{v}_{\tau}\bigl(\widetilde{\psi}_{j}^{-1}(x)\bigr)}
        {\bigl(1-(\widetilde{\psi}_{j}^{-1})^{2}(x)\kappa\bigl((\widetilde{\psi}_{j}^{-1})^{1}(x)\bigr)\bigr)^{2}}
        +
        \partial_{y_{2}}\widetilde{u}_{\tau}\bigl(\widetilde{\psi}_{j}^{-1}(x)\bigr)
        \partial_{y_{2}}\widetilde{v}_{\tau}\bigl(\widetilde{\psi}_{j}^{-1}(x)\bigr)
        \biggr]
        \nonumber\\
        &+\ktwo\int_{\mathcal{U}_{j}}
        \biggl[
        \frac{
        \partial_{y_{1}}\widetilde{u}_{\tau}\bigl(\widetilde{\psi}_{j}^{-1}(x)\bigr)
        }{1-(\widetilde{\psi}_{j}^{-1})^{2}(x)
        \kappa\bigl((\widetilde{\psi}_{j}^{-1})^{1}(x)\bigr)}
        +\partial_{y_{2}}\widetilde{u}_{\nu}\bigl(\widetilde{\psi}_{j}^{-1}(x)\bigr)
        \biggr]
        \biggl[\frac{
        \partial_{y_{1}}\widetilde{v}_{\tau}\bigl(\widetilde{\psi}_{j}^{-1}(x)\bigr)
        }{1-(\widetilde{\psi}_{j}^{-1})^{2}(x)
        \kappa\bigl((\widetilde{\psi}_{j}^{-1})^{1}(x)\bigr)}
        \biggr]
        \nonumber\\
        =&\int_{\mathcal{U}_{j}}\!{}
        \frac{\widetilde{u}_{\tau}\bigl(\widetilde{\psi}_{j}^{-1}(x)\bigr)
        \widetilde{v}_{\tau}\bigl(\widetilde{\psi}_{j}^{-1}(x)\bigr)}
        {\varepsilon^{2}}(1-|u|^{2})
        +\int_{\mathcal{U}_{j}}\!{}F_{j,\tau}\bigl(x,u(x),\nabla{}u(x)\bigr)
        \widetilde{v}_{\tau}\bigl(\widetilde{\psi}_{j}^{-1}(x)\bigr)
        \label{eq:TangentialPDEApp}
    \end{align}
    and
    \begin{align}
        &\kone\int_{\mathcal{U}_{j}}\!{}
       \biggl[
        \frac{\partial_{y_{1}}\widetilde{u}_{\nu}\bigl(\psi_{j}^{-1}(x)\bigr)
        \partial_{y_{1}}\widetilde{v}_{\nu}\bigl(\psi_{j}^{-1}(x)\bigr)}
        {\bigl(1-(\psi_{j}^{-1})^{2}(x)\kappa\bigl((\psi_{j}^{-1})^{1}(x)\bigr)\bigr)^{2}}
        +
        \partial_{y_{2}}\widetilde{u}_{\nu}\bigl(\psi_{j}^{-1}(x)\bigr)
        \partial_{y_{2}}\widetilde{v}_{\nu}\bigl(\psi_{j}^{-1}(x)\bigr)
        \biggr]\nonumber\\
        &+\ktwo\int_{\mathcal{U}_{j}}
        \biggl[\frac{
        \partial_{y_{1}}\widetilde{u}_{\tau}\bigl(\psi_{j}^{-1}(x)\bigr)
        }{1-(\widetilde{\psi}_{j}^{-1})^{2}(x)
        \kappa\bigl((\widetilde{\psi}_{j}^{-1})^{1}(x)\bigr)}
        +\partial_{y_{2}}\widetilde{u}_{\nu}\bigl(\psi_{j}^{-1}(x)\bigr)
        \biggr]
        \Bigl[\partial_{y_{2}}\widetilde{v}_{\nu}\bigl(\psi_{j}^{-1}(x)\bigr)
        \Bigr]\nonumber\\
        =&\int_{\mathcal{U}_{j}}\!{}
        \frac{\widetilde{u}_{\nu}\bigl(\psi_{j}^{-1}(x)\bigr)
        \widetilde{v}_{\nu}\bigl(\psi_{j}^{-1}(x)\bigr)}
        {\varepsilon^{2}}(1-|u|^{2})
        +\int_{\mathcal{U}_{j}}\!{}F_{j,\nu}\bigl(x,u(x),\nabla{}u(x)\bigr)
        \widetilde{v}_{\nu}\bigl(\psi_{j}^{-1}(x)\bigr)
        \, ,
        \label{eq:NormalPDEApp}
    \end{align}
    where $F_{j,\tau}$ and $F_{j,\nu}$ are determined by the remaining integrands from the previous
    calculation.
    Notice that $F_{j,\tau}$ and $F_{j,\nu}$ satisfy
    \begin{equation}\label{eq:carathmapApp}
        \max\{|F_{j,\tau}(x,z,p)|,|F_{j,\nu}(x,z,p)|\}
        \le{}C(\Omega,\ktwo)\bigl[1+|z|+|p|\bigr]
    \end{equation}
    for each $j=1,2\ldots,N$.\\
    
    Next, we use this computation in order to show that the extension, $U$,
    satisfies \eqref{def:ExtendedPDE} on $\widetilde{\mathcal{U}}_{j}$ for $j=0,1,2\ldots,N$.
    Since $U=u$ on $\widetilde{\mathcal{U}}_{0}$ then we may restrict attention to
    the case of $j=1,2,\ldots,N$.
    for $x\in\widetilde{\mathcal{U}}_{j}$ for $j=1,2,\ldots,N$.
    % Note that $\text{div}_{j}$ here is notation and does not match the
    % definition of divergence from Riemannian geometry.
    % In particular, we use this notation to denote a quantity resembling divergence
    % but including a distortion factor.
    % We also note that our inner product matches the metric
    % used in \cite{DiMiPi}.
    To obtain our goal we first observe that, by \eqref{def:uExtension}, we have,
    for $x\in\mathcal{\widetilde{U}}_{j}\setminus\Omega$, that
    \begin{align*}
        &\nabla\sigma_{j}\bigl(\sigma_{j}(x)\bigr)
        \nabla\sigma_{j}\bigl(\sigma_{j}(x)\bigr)
        \nabla{}U^{i}(x)\\
        =&\nabla\widetilde{\psi}_{j}\bigl(\widetilde{\psi}_{j}^{-1}(x)\bigr)
        \bigl[R\mathcal{G}_{j}(x,R\widetilde{\psi}_{j}^{-1}(x))\bigr]
        \nabla\widetilde{u}_{\tau}\bigl(R\widetilde{\psi}_{j}^{-1}(x)\bigr)
        \tau^{i}\bigl((\widetilde{\psi}_{j}^{-1})^{1}(x)\bigr)\\
        &-\nabla\widetilde{\psi}_{j}\bigl(\widetilde{\psi}_{j}^{-1}(x)\bigr)
        \bigl[R\mathcal{G}_{j}(x,R\widetilde{\psi}_{j}^{-1}(x))\bigr]
        \nabla\widetilde{u}_{\nu}\bigl(R\widetilde{\psi}_{j}^{-1}(x)\bigr)
        \nu^{i}\bigl((\widetilde{\psi}_{j}^{-1})^{1}(x)\bigr)\\
        &+\frac{\kappa\bigl((\widetilde{\psi}_{j}^{-1})^{1}(x)\bigr)}
        {\bigl(1+(\widetilde{\psi}_{j}^{-1})^{2}(x)\kappa\bigl((\widetilde{\psi}_{j}^{-1})^{1}(x)\bigr)\bigr)^{2}}
        \widetilde{u}_{\tau}\bigl(R(\widetilde{\psi}_{j}^{-1})^{1}(x)\bigr)
        \nabla\widetilde{\psi}_{j}\bigl(\widetilde{\psi}_{j}^{-1}(x)\bigr)
        \nu^{i}\bigl((\widetilde{\psi}_{j}^{-1})^{1}(x)\bigr)
        \mathbf{e}_{1}\\
        &+\frac{\kappa\bigl((\widetilde{\psi}_{j}^{-1})^{1}(x)\bigr)}
        {\bigl(1+(\widetilde{\psi}_{j}^{-1})^{2}(x)\kappa\bigl((\widetilde{\psi}_{j}^{-1})^{1}(x)\bigr)\bigr)^{2}}
        \widetilde{u}_{\nu}\bigl(R(\widetilde{\psi}_{j}^{-1})^{1}(x)\bigr)
        \nabla\widetilde{\psi}_{j}\bigl(\widetilde{\psi}_{j}^{-1}(x)\bigr)
        \tau^{i}\bigl((\widetilde{\psi}_{j}^{-1})^{1}(x)\bigr)
        \mathbf{e}_{1}.
    \end{align*}
    From this we conclude, using a local tangent-normal decomposition of $v$ for
    $x\in\widetilde{\mathcal{U}}_{j}\setminus\Omega$, that
    \begin{align*}
        &\sum_{i=1}^{2}
        \Bigl[\nabla\sigma_{j}\bigl(\sigma_{j}(x)\bigr)
        \nabla\sigma_{j}\bigl(\sigma_{j}(x)\bigr)^{T}
        \nabla{}U^{i}(x)\Bigr]\cdot{}\nabla{}v^{i}(x)\\
        =&\nabla\widetilde{u}_{\tau}\bigl(R\widetilde{\psi}_{j}^{-1}(x)\bigr)^{T}
        \bigl[R\mathcal{G}_{j}(x,R\widetilde{\psi}_{j}^{-1}(x))\bigr]
        \nabla\widetilde{v}_{\tau}\bigl(\widetilde{\psi}_{j}^{-1}(x)\bigr)\\
        &-\nabla\widetilde{u}_{\nu}\bigl(R\widetilde{\psi}_{j}^{-1}(x)\bigr)^{T}
        \bigl[R\mathcal{G}_{j}(x,R\widetilde{\psi}_{j}^{-1}(x))\bigr]
        \nabla\widetilde{v}_{\nu}\bigl(\widetilde{\psi}_{j}^{-1}(x)\bigr)\\
        &+\kappa\bigl((\widetilde{\psi}_{j}^{-1})^{1}(x)\bigr)
        \widetilde{v}_{\nu}\bigl(\widetilde{\psi}_{j}^{-1}(x)\bigr)
        \nabla\widetilde{u}_{\tau}\bigl(R\widetilde{\psi}_{j}^{-1}(x)\bigr)^{T}
        \bigl[R\mathcal{G}_{j}(x,R\widetilde{\psi}_{j}^{-1}(x))\bigr]
        \mathbf{e}_{1}\\
        &-\kappa\bigl((\widetilde{\psi}_{j}^{-1})^{1}(x)\bigr)
        \widetilde{v}_{\tau}\bigl(\widetilde{\psi}_{j}^{-1}(x)\bigr)
        \nabla\widetilde{u}_{\nu}\bigl(R\widetilde{\psi}_{j}^{-1}(x)\bigr)^{T}
        \bigl[R\mathcal{G}_{j}(x,R\widetilde{\psi}_{j}^{-1}(x))\bigr]
        \mathbf{e}_{1}\\
        &+\sum_{i=1}^{2}
        \frac{\kappa\bigl((\widetilde{\psi}_{j}^{-1})^{1}(x)\bigr)
        \nu^{i}\bigl((\widetilde{\psi}_{j}^{-1})^{1}(x)\bigr)}
        {\bigl(1+(\widetilde{\psi}_{j}^{-1})^{2}(x)\kappa\bigl((\widetilde{\psi}_{j}^{-1})^{1}(x)\bigr)\bigr)^{2}}
        \widetilde{u}_{\tau}\bigl((\widetilde{\psi}_{j}^{-1})^{1}(x)\bigr)
        \Bigl[
        \nabla\widetilde{\psi}_{j}\bigl(\widetilde{\psi}_{j}^{-1}(x)\bigr)\mathbf{e}_{1}\Bigr]\cdot{}\nabla{}v(x)\\
        &+\sum_{i=1}^{2}
        \frac{\kappa\bigl((\widetilde{\psi}_{j}^{-1})^{1}(x)\bigr)
        \tau^{i}\bigl((\widetilde{\psi}_{j}^{-1})^{1}(x)\bigr)}
        {\bigl(1+(\widetilde{\psi}_{j}^{-1})^{2}(x)\kappa\bigl((\widetilde{\psi}_{j}^{-1})^{1}(x)\bigr)\bigr)^{2}}
        \widetilde{u}_{\nu}\bigl((\widetilde{\psi}_{j}^{-1})^{1}(x)\bigr)
        \Bigl[
        \nabla\widetilde{\psi}_{j}\bigl(\widetilde{\psi}_{j}^{-1}(x)\bigr)\mathbf{e}_{1}\Bigr]\cdot{}\nabla{}v(x).
    \end{align*}
    A similar expression holds over $\widetilde{\mathcal{U}}_{j}\cap\Omega$
    due to the earlier calculation for $u$.
    Notice that the last two terms, containing a gradient of $v$ but not of $u$, will have no 
    boundary terms after integrating by parts since $\widetilde{u}_{\nu}=0$
    on $\partial\Omega$, $v=0$ on $\partial\widetilde{\mathcal{U}}_{j}$, and since
    the integrals involving $\widetilde{u}_{\tau}$ and $v$ will cancel after
    an integration by parts.
    Multiplying by $|\det(\nabla\sigma_{j}(x))|$, integrating over
    $\widetilde{\mathcal{U}}_{j}\setminus\Omega$, applying the Change of
    Variables theorem with $\sigma_{j}$, and using that $\sigma_{j}^{2}=\text{Id}$
    we obtain
    \begin{align*}
        &
        \int_{\widetilde{\mathcal{U}}_{j}\setminus\Omega}\!{}|\det(\nabla\sigma_{j}(x))|
        \nabla\widetilde{u}_{\tau}\bigl(R\widetilde{\psi}_{j}^{-1}(x)\bigr)^{T}
        \bigl[R\mathcal{G}_{j}(x,R\widetilde{\psi}_{j}^{-1}(x))\bigr]
        \nabla\widetilde{v}_{\tau}\bigl(\widetilde{\psi}_{j}^{-1}(x)\bigr)\\
        &-
        \int_{\widetilde{\mathcal{U}}_{j}\setminus\Omega}\!{}|\det(\nabla\sigma_{j}(x))|
        \nabla\widetilde{u}_{\nu}\bigl(R\widetilde{\psi}_{j}^{-1}(x)\bigr)^{T}
        \bigl[R\mathcal{G}_{j}(x,R\widetilde{\psi}_{j}^{-1}(x))\bigr]
        \nabla\widetilde{v}_{\nu}\bigl(\widetilde{\psi}_{j}^{-1}(x)\bigr)\\
        =&\int_{\widetilde{\mathcal{U}}_{j}\cap\Omega}\!{}
        \nabla\widetilde{u}_{\tau}\bigl(\widetilde{\psi}_{j}^{-1}(x)\bigr)
        \bigl[R\mathcal{G}_{j}(x,\widetilde{\psi}_{j}^{-1}(x))\bigr]
        \nabla\widetilde{v}_{\tau}\bigl(R\widetilde{\psi}_{j}^{-1}(x)\bigr)\\
        &-\int_{\widetilde{\mathcal{U}}_{j}\cap\Omega}\!{}
        \nabla\widetilde{u}_{\nu}\bigl(\widetilde{\psi}_{j}^{-1}(x)\bigr)
        \bigl[R\mathcal{G}_{j}(x,\widetilde{\psi}_{j}^{-1}(x))\bigr]
        \nabla\widetilde{v}_{\nu}\bigl(R\widetilde{\psi}_{j}^{-1}(x)\bigr)\\
        =&\int_{\widetilde{\mathcal{U}}_{j}\cap\Omega}\!{}
        \nabla\widetilde{u}_{\tau}\bigl(\widetilde{\psi}_{j}^{-1}(x)\bigr)
        \bigl[\mathcal{G}_{j}(x,\widetilde{\psi}_{j}^{-1}(x))\bigr]
        \nabla(\widetilde{v}^{R})_{\tau}\bigl(\widetilde{\psi}_{j}^{-1}(x)\bigr)\\
        &-\int_{\widetilde{\mathcal{U}}_{j}\cap\Omega}\!{}
        \nabla\widetilde{u}_{\nu}\bigl(\widetilde{\psi}_{j}^{-1}(x)\bigr)
        \bigl[\mathcal{G}_{j}(x,\widetilde{\psi}_{j}^{-1}(x))\bigr]
        \nabla(\widetilde{v}^{R})_{\nu}\bigl(\widetilde{\psi}_{j}^{-1}(x)\bigr)
    \end{align*}
    where $\widetilde{v}^{R}(y)\coloneqq{}\widetilde{v}(Ry)$.
    Next we see by the definitions of $U$ from \eqref{def:uExtension}
    for $x\in\widetilde{\mathcal{U}}_{j}\setminus\Omega$ and of
    $\text{div}_{j}$ that
    % that since
    % \begin{equation*}
    %     U(x)=
    %     \widetilde{u}_{\tau}\bigl(R\widetilde{\psi}_{j}^{-1}(x)\bigr)
    %     \tau\bigl((\widetilde{\psi}_{j}^{-1})^{1}(x)\bigr)
    %     -\widetilde{u}_{\nu}\bigl(R\widetilde{\psi}_{j}^{-1}(x)\bigr)
    %     \nu\bigl((\widetilde{\psi}_{j}^{-1})^{1}(x)\bigr)
    % \end{equation*}
    % for $x\in\widetilde{\mathcal{U}}_{j}\setminus\Omega$ then
    \begin{align*}
        \text{div}_{j}(U)(x)=&
        |\det(\nabla\sigma_{j}(x))|^{\frac{1}{2}}\biggl[
        \frac{1-(\widetilde{\psi}_{j}^{-1})^{2}(x)\kappa\bigl((\widetilde{\psi}_{j}^{-1})^{1}(x)\bigr)}
        {1+(\widetilde{\psi}_{j}^{-1})^{2}(x)\kappa\bigl((\widetilde{\psi}_{j}^{-1})^{1}(x)\bigr)}
        \partial_{\tau}\Bigl[
        \widetilde{u}_{\tau}\bigl(R\widetilde{\psi}_{j}^{-1}(x)\bigr)\Bigr]
        -\partial_{\nu}\Bigl[
        \widetilde{u}_{\nu}\bigl(R\widetilde{\psi}_{j}^{-1}(x)\bigr)\Bigr]
        \biggr]\\
        =&|\det(\nabla\sigma_{j}(x))|^{\frac{1}{2}}\biggl[
        \frac{\partial_{y_{1}}\widetilde{u}_{\tau}\bigl(R\widetilde{\psi}_{j}^{-1}(x)\bigr)}
        {1+(\widetilde{\psi}_{j}^{-1})^{2}(x)\kappa\bigl((\widetilde{\psi}_{j}^{-1})^{1}(x)\bigr)}
        +\partial_{y_{2}}\widetilde{u}_{\nu}\bigl(R\widetilde{\psi}_{j}^{-1}(x)\bigr)\biggr].
    \end{align*}
    A similar computation holds for $v$.
    From this we see, after integrating over
    $\widetilde{\mathcal{U}}_{j}\setminus\Omega$, that
    \begin{equation*}
        \int_{\widetilde{\mathcal{U}}_{j}\setminus\Omega}\!{}
        \text{div}_{j}(U)\text{div}_{j}(v)
        =\int_{\widetilde{\mathcal{U}}_{j}\setminus\Omega}\!{}
        |\det(\nabla\sigma_{j}(x))|^{\frac{1}{2}}\biggl[
        \frac{\partial_{y_{1}}\widetilde{u}_{\tau}\bigl(R\widetilde{\psi}_{j}^{-1}(x)\bigr)}
        {1+(\widetilde{\psi}_{j}^{-1})^{2}(x)\kappa\bigl((\widetilde{\psi}_{j}^{-1})^{1}(x)\bigr)}
        +\partial_{y_{2}}\widetilde{u}_{\nu}\bigl(R\widetilde{\psi}_{j}^{-1}(x)\bigr)
        \biggr]
        \text{div}_{j}(v).
    \end{equation*}
    Changing variables now gives
    \begin{align*}
        &\int_{\widetilde{\mathcal{U}}_{j}\cap\Omega}\!{}
        |\det(\nabla\sigma_{j}(x))|^{\frac{-1}{2}}
        \biggl[
        \frac{\partial_{y_{1}}\widetilde{u}_{\tau}\bigl(\widetilde{\psi}_{j}^{-1}(x)\bigr)}
        {1-(\widetilde{\psi}_{j}^{-1})^{2}(x)\kappa\bigl((\widetilde{\psi}_{j}^{-1})^{1}(x)\bigr)}
        +\partial_{y_{2}}\widetilde{u}_{\nu}\bigl(\widetilde{\psi}_{j}^{-1}(x)\bigr)
        \biggr]
        \text{div}_{j}(v)(\sigma_{j}(x))\\
        =&\int_{\widetilde{\mathcal{U}}_{j}\cap\Omega}\!{}
        \biggl[
        \frac{\partial_{y_{1}}\widetilde{u}_{\tau}\bigl(\widetilde{\psi}_{j}^{-1}(x)\bigr)}
        {1-(\widetilde{\psi}_{j}^{-1})^{2}(x)\kappa\bigl((\widetilde{\psi}_{j}^{-1})^{1}(x)\bigr)}
        +\partial_{y_{2}}\widetilde{u}_{\nu}\bigl(\widetilde{\psi}_{j}^{-1}(x)\bigr)
        \biggr]
        \frac{\partial_{y_{1}}\widetilde{v}_{\tau}\bigl(R\widetilde{\psi}_{j}^{-1}(x)\bigr)}
        {1-(\widetilde{\psi}_{j}^{-1})^{2}(x)\kappa\bigl((\widetilde{\psi}_{j}^{-1})^{1}(x)\bigr)}
        \\
        &+\int_{\widetilde{\mathcal{U}}_{j}\cap\Omega}\!{}
        \biggl[
        \frac{\partial_{y_{1}}\widetilde{u}_{\tau}\bigl(\widetilde{\psi}_{j}^{-1}(x)\bigr)}
        {1-(\widetilde{\psi}_{j}^{-1})^{2}(x)\kappa\bigl((\widetilde{\psi}_{j}^{-1})^{1}(x)\bigr)}
        +\partial_{y_{2}}\widetilde{u}_{\nu}\bigl(\widetilde{\psi}_{j}^{-1}(x)\bigr)
        \biggr]
        \partial_{y_{2}}\widetilde{v}_{\nu}\bigl(R\widetilde{\psi}_{j}^{-1}(x)\bigr).
    \end{align*}
    Finally, notice that we may rewrite this as
    \begin{align*}
        &\int_{\widetilde{\mathcal{U}}_{j}\cap\Omega}\!{}
        \biggl[
        \frac{\partial_{y_{1}}\widetilde{u}_{\tau}\bigl(\widetilde{\psi}_{j}^{-1}(x)\bigr)}
        {1-(\widetilde{\psi}_{j}^{-1})^{2}(x)\kappa\bigl((\widetilde{\psi}_{j}^{-1})^{1}(x)\bigr)}
        +\partial_{y_{2}}\widetilde{u}_{\nu}\bigl(\widetilde{\psi}_{j}^{-1}(x)\bigr)
        \biggr]
        \frac{\partial_{y_{1}}(\widetilde{v}^{R})_{\tau}\bigl(\widetilde{\psi}_{j}^{-1}(x)\bigr)}
        {1-(\widetilde{\psi}_{j}^{-1})^{2}(x)\kappa\bigl((\widetilde{\psi}_{j}^{-1})^{1}(x)\bigr)}
        \\
        &-\int_{\widetilde{\mathcal{U}}_{j}\cap\Omega}\!{}
        \biggl[
        \frac{\partial_{y_{1}}\widetilde{u}_{\tau}\bigl(\widetilde{\psi}_{j}^{-1}(x)\bigr)}
        {1-(\widetilde{\psi}_{j}^{-1})^{2}(x)\kappa\bigl((\widetilde{\psi}_{j}^{-1})^{1}(x)\bigr)}
        +\partial_{y_{2}}\widetilde{u}_{\nu}\bigl(\widetilde{\psi}_{j}^{-1}(x)\bigr)
        \biggr]
        \partial_{y_{2}}(\widetilde{v}^{R})_{\nu}\bigl(\widetilde{\psi}_{j}^{-1}(x)\bigr)
    \end{align*}
    where, as before, $v^{R}(y)\coloneqq{}v(Ry)$.
    Altogether we find that
    \begin{align*}
        &
        \sum_{i=1}^{2}\kone\int_{\widetilde{\mathcal{U}}_{j}}\!{}
        \bigl<\nabla{}U^{i}(x),\nabla{}v^{i}(x)\bigr>_{j}
        +\ktwo\int_{\widetilde{\mathcal{U}}_{j}}\!{}
        \text{div}_{j}(U)(x)\text{div}_{j}(v)(x)
        \\
        =&\kone\int_{\widetilde{\mathcal{U}}_{j}\cap\Omega}\!{}
        \nabla\widetilde{u}_{\tau}\bigl(\widetilde{\psi}_{j}^{-1}(x)\bigr)^{T}
        \bigl[\mathcal{G}_{j}(x,\widetilde{\psi}_{j}^{-1}(x))\bigr]
        \bigl[\nabla\widetilde{v}_{\tau}\bigl(\widetilde{\psi}_{j}^{-1}(x)\bigr)+
        \nabla\widetilde{v}_{\tau}^{R}\bigl(\widetilde{\psi}_{j}^{-1}(x)\bigr)\bigr]\\
        &+\kone
        \int_{\widetilde{\mathcal{U}}_{j}\cap\Omega}\!{}
        \nabla\widetilde{u}_{\nu}\bigl(\widetilde{\psi}_{j}^{-1}(x)\bigr)^{T}
        \bigl[\mathcal{G}_{j}(x,\widetilde{\psi}_{j}^{-1}(x))\bigr]
        \bigl[\nabla\widetilde{v}_{\nu}\bigl(\widetilde{\psi}_{j}^{-1}(x)\bigr)-
        \nabla\widetilde{v}_{\nu}^{R}\bigl(\widetilde{\psi}_{j}^{-1}(x)\bigr)\bigr]\\
        &+\ktwo\int_{\widetilde{\mathcal{U}}_{j}\cap\Omega}\!{}
        \biggl[
        \frac{\partial_{y_{1}}\widetilde{u}_{\tau}\bigl(\widetilde{\psi}_{j}^{-1}(x)\bigr)}
        {1-(\widetilde{\psi}_{j}^{-1})^{2}(x)\kappa\bigl((\widetilde{\psi}_{j}^{-1})^{1}(x)\bigr)}
        +\partial_{y_{2}}\widetilde{u}_{\nu}\bigl(\widetilde{\psi}_{j}^{-1}(x)\bigr)
        \biggr]
        \frac{\partial_{y_{1}}\widetilde{v}_{\tau}\bigl(\widetilde{\psi}_{j}^{-1}(x)\bigr)+\partial_{y_{1}}(\widetilde{v}^{R})_{\tau}\bigl(\widetilde{\psi}_{j}^{-1}(x)\bigr)}
        {1-(\widetilde{\psi}_{j}^{-1})^{2}(x)\kappa\bigl((\widetilde{\psi}_{j}^{-1})^{1}(x)\bigr)}
        \\
        &+\ktwo\int_{\widetilde{\mathcal{U}}_{j}\cap\Omega}\!{}
        \biggl[
        \frac{\partial_{y_{1}}\widetilde{u}_{\tau}\bigl(\widetilde{\psi}_{j}^{-1}(x)\bigr)}
        {1-(\widetilde{\psi}_{j}^{-1})^{2}(x)\kappa\bigl((\widetilde{\psi}_{j}^{-1})^{1}(x)\bigr)}
        +\partial_{y_{2}}\widetilde{u}_{\nu}\bigl(\widetilde{\psi}_{j}^{-1}(x)\bigr)
        \biggr]
        \bigl[\partial_{y_{2}}\widetilde{v}_{\nu}\bigl(\widetilde{\psi}_{j}^{-1}(x)\bigr)-\partial_{y_{2}}(\widetilde{v}^{R})_{\nu}\bigl(\widetilde{\psi}_{j}^{-1}(x)\bigr)
        \bigr]\\
        &+\int_{\widetilde{\mathcal{U}}_{j}\setminus\Omega}\!{}
        \widetilde{F}_{j}\bigl(x,U(x),\nabla{}U(x)\bigr)\cdot
        v(x),
    \end{align*}
    where $\widetilde{F}_j$ satisfies an estimate similar to \eqref{eq:carathmapApp}.
    We introduce the notation for the even part of a function $v$
    \begin{equation*}
        v_{E}(x)\coloneqq{}\frac{v(x)+\mathfrak{R}_{j}\bigl(x,v(\sigma_{j}(x))\bigr)}{2}.
    \end{equation*}
    Notice that
    \begin{equation*}
        v_{E}(x)=
        \frac{\widetilde{v}_{\tau}\bigl(\widetilde{\psi}_{j}^{-1}(x)\bigr)
        +\widetilde{v}_{\tau}\bigl(R\widetilde{\psi}_{j}^{-1}(x)\bigr)}{2}
        \tau\bigl((\widetilde{\psi}_{j}^{-1})^{1}(x)\bigr)
        +\frac{\widetilde{v}_{\nu}\bigl(\widetilde{\psi}_{j}^{-1}(x)\bigr)
        -\widetilde{v}_{\nu}\bigl(R\widetilde{\psi}_{j}^{-1}(x)\bigr)}{2}
        \nu\bigl((\widetilde{\psi}_{j}^{-1})^{1}(x)\bigr)
    \end{equation*}
    and hence for $x\in\partial\Omega$ we have
    \begin{equation*}
        (v_{E})_{\nu}(x)=
        \frac{\widetilde{v}_{\nu}\bigl(\widetilde{\psi}_{j}^{-1}(x)\bigr)
        -\widetilde{v}_{\nu}\bigl(R\widetilde{\psi}_{j}^{-1}(x)\bigr)}{2}
        =\frac{\widetilde{v}_{\nu}(y_{1},0)-\widetilde{v}_{\nu}(y_{1},0)}{2}=0.
    \end{equation*}
    Because of the above, we can use
    \eqref{eq:TangentialPDEApp}, \eqref{eq:NormalPDEApp}, and that $U=u$ on
    $\Omega$ to obtain
    \begin{equation*}
        \sum_{i=1}^{2}\kone\int_{\widetilde{\mathcal{U}}_{j}}\!{}
        \bigl<\nabla{}U^{i},\nabla{}v^{i}\bigr>_{j}
        +\ktwo\int_{\widetilde{\mathcal{U}}_{j}}\!{}
        \text{div}_{j}(U)\text{div}_{j}(v)
        =2\int_{\widetilde{\mathcal{U}}_{j}\cap\Omega}\!{}
        \frac{U\cdot{}v_{E}}{\varepsilon^{2}}(1-|U|^{2})
        +\int_{\widetilde{\mathcal{U}}_{j}}\!{}
        \widetilde{\mathcal{F}}_{j}\bigl(x,U(x),\nabla{}U(x)\bigr)\cdot
        v(x),
    \end{equation*}
    where $\widetilde{\mathcal{F}}_j(x,U(x),\nabla{}U(x))$ is dependent on $\widetilde{F}_j, F_{j,\nu},F_{j,\tau}$ and satisfies a linear growth condition as in \eqref{eq:carathmapApp}.
    Noting that
    \begin{equation*}
        U(x)\cdot\bigl[\mathfrak{R}_{j}\bigl(x,v(\sigma_{j}(x))\bigr)\bigr]=\mathfrak{R}_{j}(x,U(x))\cdot{}v(\sigma_{j}(x))
        \, ,
    \end{equation*}
    and changing variables gives \eqref{def:ExtendedPDE}.
    % have the desired identity.
    % Integrating by parts and combining this with our previous calculations we now
    % have \eqref{def:ExtendedPDE}.
\qed

\section{Polar example}
%\addcontentsline{toc}{section}{Appendix B: Polar Example}
\label{app:polar-example}
In this section we illustrate the specific example of the unit disk for which one chart $\mathcal{U}_j$ is sufficient and the curvature is constant.
Here it is easier to see how the distortion factor enters the PDE after reflection at the boundary.
In Remark \ref{app:rem:pullback_metric} we explain that this factor is a natural consequence of the extension and is also present for other choices of metrics such as the standard pullback metric.

We let $\Omega\coloneqq{}B_{1}(0)$ and we describe this set using polar
coordinates.
In order to extend a function defined on $\Omega$ in polar coordinates to
a larger region we introduce the coordinates $y_{1},y_{2}$ defined as
\begin{equation*}
    y_{1}=\theta,\hspace{20pt}
    y_{2}=1-r.
\end{equation*}
Notice that when $r>1$ we have that $y_{2}<0$.
Also, notice that, by Chain Rule, we have
\begin{equation*}
    \partial_{y_{1}}=\partial_{\theta},\hspace{20pt}
    \partial_{y_{2}}=-\partial_{r}.
\end{equation*}
In order to parametrize $\Omega$ we use the map
\begin{equation*}
    \psi(y_{1},y_{2})=(1-y_{2})(\cos(y_{1}),\sin(y_{1}))
\end{equation*}
which has gradient
\begin{equation*}
    \nabla\psi(y_{1},y_{2})=
    \begin{pmatrix}
        -(1-y_{2})\sin(y_{1})& -\cos(y_{1})\\
        (1-y_{2})\cos(y_{1})& -\sin(y_{1})
    \end{pmatrix}
\end{equation*}
and Jacobian
\begin{equation*}
    \det(\nabla\psi(y_{1},y_{2}))=1-y_{2}.
\end{equation*}
Also notice that
\begin{equation*}
    [\nabla\psi(y_{1},y_{2})]^{-1}=
    \begin{pmatrix}
        -\frac{\sin(y_{1})}{1-y_{2}}& \frac{\cos(y_{1})}{1-y_{2}}\\
        -\cos(y_{1})& -\sin(y_{1})
    \end{pmatrix}.
\end{equation*}
On $\Omega$ we use the pullback metric whose associated matrix is given by
\begin{equation*}
    g=\nabla\psi(y_{1},y_{2})^{T}\nabla\psi(y_{1},y_{2})=
    \begin{pmatrix}
        (1-y_{2})^{2}& 0\\
        0& 1
    \end{pmatrix}
\end{equation*}
and whose inverse metric has associated matrix
\begin{equation*}
    g^{-1}=
    \begin{pmatrix}
        \frac{1}{(1-y_{2})^{2}}& 0\\
        0& 1
    \end{pmatrix}.
\end{equation*}
Notice that the Jacobian is given by
\begin{equation*}
    \sqrt{\det(g)}=1-y_{2}.
\end{equation*}
Relative to the metric $g$ we observe that the normalized differential operators are
\begin{equation*}
    E_{1}\coloneqq\frac{1}{1-y_{2}}\partial_{y_{1}}\hspace{20pt}
    E_{2}\coloneqq\partial_{y_{2}}.
\end{equation*}
If a vector field $F$ is described as
\begin{equation*}
    F(x)=F_{\tau}(x)(-\sin((\psi^{-1})^{1}(x))),\cos((\psi^{-1})^{1}(x)))^{T}
    +F_{\nu}(x)(-\cos((\psi^{-1})^{1}(x))),-\sin((\psi^{-1})^{1}(x))))^{T}
\end{equation*}
then, since $\mathrm{d}\psi_{\#}(\mathbf{e}_{1})=(1-y_{2})(-\sin(y_{1}),\cos(y_{1}))$ and
$\mathrm{d}\psi_{\#}(\mathbf{e}_{2})=(-\cos(\theta),-\sin(\theta))$, we see that this pulls back under $\psi$ as
\begin{equation*}
    [\psi^{*}F](y)=\frac{\widetilde{F}_{\tau}(y)}{1-y_{2}}\mathbf{e}_{1}
    +\widetilde{F}_{\nu}(y)\mathbf{e}_{2}
\end{equation*}
where $\widetilde{F}_{\tau}$ and $\widetilde{F}_{\nu}$ denote the coordinate descriptions of
$F_{\tau}$ and $F_{\nu}$ respectively.
The divergence, relative to $g$, of this pullback vector field is given by
\begin{equation*}
    \text{div}_{g}\bigl([\psi^{*}F]\bigr)
    =\frac{\partial_{y_{1}}\widetilde{F}_{\tau}(y)}{1-y_{2}}
    +\frac{\partial_{y_{2}}\Bigl[(1-y_{2})\widetilde{F}_{\nu}(y)\Bigr]}{1-y_{2}}.
\end{equation*}
Finally, for a scalar function, $f$, we set
\begin{equation*}
    \nabla_{g}f\coloneqq{}g^{-1}\mathrm{d}f.
\end{equation*}
On the exterior of $\Omega$ we continue to use $\psi$ as a means of defining coordinates.
This means that we parametrize $B_{2}(0)\setminus\Omega$ using
$y_{1}\in(0,2\pi)$ and $y_{2}<0$.
We also intend to use the metric $\overline{g}$ whose associated matrix is given by
\begin{equation*}
    \overline{g}=\nabla\psi(y_{1},-y_{2})^{T}\nabla\psi(y_{1},-y_{2})
    =
    \begin{pmatrix}
        (1+y_{2})^{2}& 0\\
        0& 1
    \end{pmatrix}.
\end{equation*}
Notice that this is not the pullback metric according to the coordinates $\psi$.
In addition, notice that this is only non-singular on $B_{2}(0)$.
The associated inverse matrix is
\begin{equation*}
    \overline{g}^{-1}=\nabla\psi(y_{1},-y_{2})^{T}\nabla\psi(y_{1},-y_{2})
    =
    \begin{pmatrix}
        \frac{1}{(1+y_{2})^{2}}& 0\\
        0& 1
    \end{pmatrix}.
\end{equation*}
We notice that
\begin{equation*}
    \sqrt{\det(\overline{g})}=1+y_{2}
\end{equation*}
and the normalized differential operators realtive to $\overline{g}$ are
\begin{equation*}
    \overline{E}_{1}\coloneqq\frac{1}{1+y_{2}}\partial_{y_{1}},\hspace{20pt}
    \overline{E}_{2}\coloneqq\partial_{y_{2}}.
\end{equation*}
If a vector field $F$ is described as
\begin{align*}
    F(x)=&\widetilde{F}_{\tau}((\psi^{-1})^{1}(x),-(\psi^{-1})^{2}(x))(-\sin((\psi^{-1})^{1}(x))),\cos((\psi^{-1})^{1}(x)))^{T}\\
    &-\widetilde{F}_{\nu}((\psi^{-1})^{1}(x),-(\psi^{-1})^{2}(x))(-\cos((\psi^{-1})^{1}(x))),-\sin((\psi^{-1})^{1}(x))))^{T}
\end{align*}
then we see that this pulls back under $\psi$ as
\begin{equation*}
    [\psi^{*}F](y)=\frac{\widetilde{F}_{\tau}(y_{1},-y_{2})}{1-y_{2}}\mathbf{e}_{1}
    -\widetilde{F}_{\nu}(y_{1},-y_{2})\mathbf{e}_{2}.
\end{equation*}
The divergence, relative to $\overline{g}$, of this pullback vector field is given by
\begin{equation*}
    \text{div}_{\overline{g}}\bigl([\psi^{*}F]\bigr)
    =\frac{1}{1+y_{2}}
    \partial_{y_{1}}\biggl(\frac{1+y_{2}}{1-y_{2}}\widetilde{F}_{\tau}(y_{1},y_{2})\biggr)
    -\frac{\partial_{y_{2}}\Bigl[(1+y_{2})\widetilde{F}_{\nu}(y_{1},-y_{2})\Bigr]}{1+y_{2}}.
\end{equation*}
Notice that when considering only the highest order terms of the divergence we obtain
\begin{equation*}
    \frac{\partial_{y_{1}}\widetilde{F}_{\tau}(y_{1},-y_{2})}{1-y_{2}}
    +\partial_{y_{2}}\widetilde{F}_{\nu}(y_{1},-y_{2})
\end{equation*}
where $y_{2}<0$.
Since the argument of the coordinate functions is $y_{1},-y_{2}$ we will need to make a
change of coordinates.
Notice that for this to end up with a factor of $\frac{1}{1-y_{2}}$ after this change of
variables we need that the first term be multiplied by
\begin{equation*}
    \frac{1-y_{2}}{1+y_{2}}.
\end{equation*}
We require this to obtain a PDE of similar form as inside the interior.
As a result, we will be interested in
\begin{equation*}
    \frac{1-y_{2}}{(1+y_{2})^{2}}
    \partial_{y_{1}}\biggl(\frac{1+y_{2}}{1-y_{2}}\widetilde{F}_{\tau}(y_{1},y_{2})\biggr)
    -\frac{\partial_{y_{2}}\Bigl[(1+y_{2})\widetilde{F}_{\nu}(y_{1},-y_{2})\Bigr]}{1+y_{2}}
\end{equation*}
which does not agree with the divergence with respect to the metric $\overline{g}$.
From this we see that a distortion factor is necessary to obtain the desired PDE after
reflecting.
\begin{remark}\label{app:rem:pullback_metric}
    Notice that if we had used the standard pullback metric
    \begin{equation*}
        \overline{g}=\nabla\psi(y)^{T}\nabla\psi(y)
        =
        \begin{pmatrix}
            (1-y_{2})^{2}& 0\\
            0& 1
        \end{pmatrix}
    \end{equation*}
    there would still be distortion factors.
    To see this, observe that in this case $\sqrt{\det(\overline{g})}=1-y_{2}$ and that
    our pullback vector field will be
    \begin{equation*}
    [\psi^{*}F](y)=\frac{\widetilde{F}_{\tau}(y_{1},-y_{2})}{1-y_{2}}\mathbf{e}_{1}
    -\widetilde{F}_{\nu}(y_{1},-y_{2})\mathbf{e}_{2}.
    \end{equation*}
    Computing the divergence with this metric gives
    \begin{equation*}
        \text{div}_{\overline{g}}([\psi^{*}F])=
        \frac{1}{1-y_{2}}\partial_{y_{1}}\widetilde{F}_{\tau}(y_{1},-y_{2})
        +\frac{1}{1-y_{2}}\partial_{y_{2}}\Bigl((1-y_{2})\widetilde{F}_{\nu}(y_{1},-y_{2})\Bigr)
    \end{equation*}
    which still requires a corrective factor for the partial derivative in $y_{1}$.
    The benefit of using the pullback metric is that there is a corresponding correct factor
    needed for the gradient.
    To see this notice that to highest order
    \begin{align*}
        \bigl<\nabla_{g}u(y_{1},-y_{2}),\nabla_{g}v(y)\bigr>
        &=g^{-1}(\mathrm{d}u(y_{1},-y_{2}),\mathrm{d}v(y))\\
        &\approx\frac{\partial_{y_{1}}\widetilde{u}_{\tau}
        \partial_{y_{1}}\widetilde{v}_{\tau}}{(1-y_{2})^{2}}
        +\partial_{y_{2}}\widetilde{u}_{\tau}\partial_{y_{2}}\widetilde{v}_{\tau}
        +\frac{\partial_{y_{1}}\widetilde{u}_{\nu}
        \partial_{y_{1}}\widetilde{v}_{\nu}}{(1-y_{2})^{2}}
        +\partial_{y_{2}}\widetilde{u}_{\nu}\partial_{y_{2}}\widetilde{v}_{\nu}
    \end{align*}
    and so a correction factor is needed in order to have a denominator of
    $\frac{1}{(1+y_{2})^{2}}$ before reflecting.
\end{remark}

\paragraph{Acknowledgements.}
LB acknowledges support from an NSERC (Canada) Discovery Grant.

\bibliographystyle{acm}
\bibliography{Bibliography}

\end{document}